\numberwithin{equation}{section} 
\newtheorem{theorem}{Theorem}[section]
\newtheorem{corollary}[theorem]{Corollary}
\newtheorem{lemma}[theorem]{Lemma}
\newtheorem{proposition}[theorem]{Proposition}
\theoremstyle{definition} 
\newtheorem{definition}[theorem]{Definition}
\newtheorem{example}[theorem]{Example}
\newtheorem{remark}[theorem]{Remark}
\newtheorem{question}{Question}
\newcommand{\R}{\mathbb{R}}	
\newcommand{\N}{\mathbb{N}} 
\newcommand{\dx}{\,\mathrm{d}x}	
\newcommand{\ds}{\,\mathrm{d}S}	
\newcommand{\dr}{\,\mathrm{d}r}	
\newcommand{\dt}{\,\mathrm{d}x}	
\renewcommand{\d}{\mathrm{d}}
\newcommand{\e}{\varepsilon}	
\newcommand{\weak}{\rightharpoonup}
\newcommand{\nnu}{{\bm{\nu}}}  
\newcommand{\D}{\mathcal{D}}
\newcommand{\Te}{\mathcal{T}_\e}
\newcommand{\Ee}{\mathsf{E}_\e}
\newcommand{\Oe}{{\Omega_\e}}
\newcommand{\norm}[1]{\left\lVert #1 \right\lVert}
\newcommand{\abs}[1]{\left| #1 \right|}
\newcommand{\sub}{\subseteq}
\newcommand\restr[1]{\raisebox{-.5ex}{$|$}_{#1}}
\DeclareMathOperator{\Span}{\mathrm{span}}
\DeclareMathOperator{\dist}{\mathrm{dist}}
\newenvironment{bvp}{\left\{\begin{aligned}  }{\end{aligned}\right.}
\title[Spectral stability for the Neumann Laplacian in domains with small holes]{Quantitative spectral stability for the Neumann Laplacian in domains with small holes }
\author{Veronica Felli, Lorenzo Liverani and Roberto Ognibene}
\address{Veronica Felli
  \newline \indent Dipartimento di Matematica e Applicazioni
  \newline \indent
Universit\`a degli Studi di Milano–Bicocca
\newline\indent Via Cozzi 55, 20125 Milano, Italy}
\email{veronica.felli@unimib.it}
\address{Lorenzo Liverani
	\newline \indent Department Mathematik
    \newline \indent Chair of Dynamics, Control, Machine Learning and Numerics
	\newline \indent Friedrich-Alexander-Universität Erlangen-Nürnberg 
	\newline\indent  Cauerstraße 11, 91058 Erlangen, Germany}
\email{lorenzo.liverani@fau.de}
\address{Roberto Ognibene
	\newline \indent Dipartimento di Matematica
	\newline \indent Universit\`a di Pisa
	\newline\indent  Largo Bruno Pontecorvo, 5, 56127 Pisa, Italy}
\email{roberto.ognibene@dm.unipi.it}
\keywords{Neumann eigenvalues; spectral stability; singularly perturbed domains; blow-up analysis; torsional rigidity.}
\subjclass[2020]{35J20; 35B25; 	35P15.}
\date{Revised version, March 3 2025}
\begin{document}

\begin{abstract}
The aim of the present paper is to investigate the behavior of the spectrum of 
the Neumann Laplacian  in domains  with little holes excised from 
the interior. More precisely, we consider the 
eigenvalues of the Laplacian with homogeneous Neumann boundary conditions on a 
bounded, Lipschitz domain. Then, we singularly perturb the domain by 
removing Lipschitz sets which are “small” in a suitable sense and satisfy a 
uniform extension property. In this context, we provide an asymptotic expansion for all 
the eigenvalues of the perturbed problem which are converging to simple 
eigenvalues of the limit one. The eigenvalue variation turns out to depend 
on a geometric quantity resembling the notion of (boundary) torsional rigidity: 
understanding this fact is one of the main contributions of the present paper. 
In the particular case of a hole shrinking to a point, through a fine 
blow-up analysis, we identify the exact vanishing order of such a quantity and 
we establish some connections between the location of the hole and the sign of the eigenvalue variation.
\end{abstract}

\maketitle

\tableofcontents

\section{Introduction}
In the present paper, we investigate the stability of the spectrum of the Neumann Laplacian under singular perturbations, consisting in the removal of small holes from a
bounded domain. 

Eigenvalues and eigenfunctions of differential operators 
are ubiquitous  in the 
theory of partial differential equations. Understanding how these are sensitive to small perturbations, such as variations in the domain, is of interest in several fields of physical applications, e.g. quantum mechanics, material sciences, heat conduction, climate modeling and acoustics. See, in particular, \cite{rayleigh} for perturbation theory in acoustics, \cite[Chapter V]{Courant-Hilbert} for eigenvalue problems in connection with vibrating systems and heat conduction  and \cite{DelSole2015} (see also \cite{Yang2022}) for links to climate analysis. We also quote \cite{hale} for a thorough survey on the dependence of eigenvalues and eigenfunctions on smooth and nonsmooth perturbations of the domain. Furthermore, a comprehensive understanding of the shape of  eigenfunctions holds great significance in many numerical analysis problems. Nonetheless, the computational cost of determining eigenelements in domains with minute cavities is considerably high: indeed, to ensure precision in such cases, i.e. to discern even small variations, dense mesh structures are needed around these cavities. Consequently, theoretical approximation results in this specific context assume a pivotal role. We refer to \cite{Boffi-Gardini-Gastaldi} and references therein for a wide discussion of the topic. We also mention  \cite{Marshall} for some recent applications to machine learning of spectral stability of the Neumann Laplacian under domain deformations. Finally, as pointed out in \cite[Section 1.4]{henrot2006}, asymptotic expansions of eigenvalues in domains with small holes might find applications in shape optimization, e.g. in the proof of non-existence of minimizers.

The problem of spectral stability  for differential operators in perforated domains has a long history, and presents intrinsically different
features depending on which kind of boundary conditions are taken into account. 
Let us consider a bounded open set 
$\Omega\sub\R^N$, which we call \emph{unperturbed domain}, and a compact subset $K\sub 
\Omega$, which we call \emph{hole}; we refer to the set $\Omega\setminus K$ 
as the \emph{perturbed domain}. 
In order for 
$\Omega\setminus K$ to be regarded as a perturbation of $\Omega$, the hole $K$ 
needs to be sufficiently small, in a suitable sense depending on the operator under investigation. In this regard, a key role is played by the  conditions  prescribed  on the boundaries of both the unperturbed domain  and  the hole; among the most studied cases, we find homogeneous Dirichlet and
Neumann boundary conditions, as well as Robin-type ones. 
Under each of the  boundary conditions mentioned above and under suitable regularity assumptions on the sets,
the eigenvalue problem for the Laplace operator on the perturbed domain $\Omega\setminus K$ typically admits
  a  sequence of diverging eigenvalues 
\[
    \Lambda_0(\Omega\setminus K)\leq\Lambda_1(\Omega\setminus K)\leq \Lambda_2(\Omega\setminus K)\leq\cdots\leq \Lambda_n(\Omega\setminus K)\leq \cdots,
\]
by classical spectral theory.
Analogously, the unpertubed problem (corresponding to the case $K=\emptyset$)
typically admits a sequence of diverging eigenvalues
\[
    \Lambda_0(\Omega)\leq\Lambda_1(\Omega)\leq \Lambda_2(\Omega)\leq\cdots\leq \Lambda_n(\Omega)\leq \cdots.
\]
In this setting, the stability of the spectrum is a main object of investigation. More precisely, a major question is the following:
\begin{question}\label{q:1}
    Under which conditions on the hole $K$, are the perturbed eigenvalues $\Lambda_n(\Omega\setminus K)$ arbitrarily close to the corresponding unperturbed ones $\Lambda_n(\Omega)$?
\end{question}
Once conditions on $K$ that ensure spectral stability are found, the further following question naturally arises:
\begin{question}\label{q:2}
    Is it possible to quantify the difference $\Lambda_n(\Omega\setminus K)-\Lambda_n(\Omega)$ in terms of some \emph{measurement} of $K$?
\end{question}
In the case of homogeneous Neumann boundary conditions on both the external boundary $\partial\Omega$ and the hole's boundary  $\partial K$,  question \ref{q:1} has been answered in \cite{Rauch1975}. In the present paper we focus on question \ref{q:2}.   

\medskip
We precede the presentation of our results with a brief overview of the literature dealing with the spectral stability for the Laplacian in perforated domains.
This problem  is widely investigated. In particular, the case of Dirichlet boundary conditions is one of the most studied and, being the literature on the topic so vast, we cite here just some of the most relevant papers. In the Dirichlet case, it is well known that a key quantity in the study of spectral stability is the capacity of the hole. 
Some first estimates of the variation of the Dirichlet eigenvalues in terms of the capacity of the removed set date back to \cite{samarskii1948}. 
The paper \cite{Rauch1975}, published in 1975, still stands as a pivotal reference in this research field; it contains a more systematic study of spectral stability in domains with small holes, taking also into account more general boundary conditions. 
Subsequent studies are carried out in a series of papers by   Ozawa in the 80s,  deriving sharp asymptotic expansions of perturbed eigenvalues, especially in small dimensions, see e.g. \cite{ozawa1981}. Another relevant result is contained in \cite{Courtois1995} (recently revisited in \cite{AFHL}), which provides  an asymptotic expansion for any perturbed 
(possibly multiple) eigenvalue; in particular, if  $\Lambda_n(\Omega)$ is a simple Dirichlet eigenvalue and $u_n$ is a corresponding $L^2$-normalized eigenfunction, then
\begin{equation}\label{eq:soa_1}
    \Lambda_n(\Omega\setminus K_\e)=\Lambda_n(\Omega)+\mathrm{cap}_\Omega(K_\e,u_n)+o(\mathrm{cap}_\Omega(K_\e,u_n))
    \quad\text{as $\e\to0$,}
\end{equation}
where $\{K_\e\}_{\e>0}$ is a family  of compact sets concentrating to a zero-capacity set
as $\e\to0$,
 and
\[
    \mathrm{cap}_{\Omega}(K_\e,u_n):=\inf\left\{ \int_\Omega\abs{\nabla u}^2\dx\colon u\in H^1_0(\Omega),~u-u_n\in H^1_0(\Omega\setminus K_\e)\right\}.
\]
We also cite \cite{ALM2022}, treating the case of multiple limit eigenvalues. For simple eigenvalues, an analogue of \eqref{eq:soa_1} 
is derived in \cite{AFN2020} 
in a fractional setting and in \cite{FR23} for polyharmonic operators.
The results of \cite{FNO1}  seem to suggest that  only the boundary conditions prescribed on the hole essentially play a role in the asymptotics of eigenvalues when the hole disappears: indeed, in \cite{FNO1},
in the case of Neumann conditions prescribed on the outer boundary and Dirichlet conditions on the hole, an asymptotic expansion similar to  \eqref{eq:soa_1} is proved. Again a suitable notion of capacity of the hole comes into play.

As for Neumann boundary conditions prescribed on the hole, less is known, and a richer phenomenology can be observed. After the work of Rauch and Taylor \cite{Rauch1975}, where sufficient conditions for stability of the Neumann spectrum are provided, several papers investigate the asymptotic behavior of perturbed eigenvalues. 
In dimension $2$ and in the case of a disk-shaped hole, Ozawa \cite{ozawa1983} proves that, if $\Lambda_n(\Omega)$ is a simple eigenvalue of the Dirichlet Laplacian in $\Omega$, and $\Lambda_{n,\e}$ is the $n$-th eigenvalue of
\begin{equation}\label{eq:mixed}
    \begin{bvp}
        -\Delta u &=\Lambda u, &&\text{in }\Omega_\e:=\Omega\setminus\overline{\Sigma_\e}, \\
        u &=0, &&\text{on }\partial \Omega, \\
        \partial_\nnu u &=0, &&\text{on }\partial \Sigma_\e,
        \end{bvp}
\end{equation}
with $\Sigma_\e= \{x\in \R^2:|x-x_0|<\e\}$ for some $x_0\in\Omega$, then
\[  
    \Lambda_{n,\e}=\Lambda_n(\Omega)-\pi\e^2\left(2\abs{\nabla u_n(x_0)}^2-\Lambda_n(\Omega)u_n^2(x_0)\right)+O(\e^3|\log\e|^2)\quad\text{as }\e\to 0,
\]
where $u_n\in H^1_0(\Omega)$ is a $L^2$-normalized Dirichlet eigenfunction corresponding to $\Lambda_n(\Omega)$. See also \cite{ozawa1985} for asymptotic properties of   eigenfunctions of \eqref{eq:mixed} in dimension $2$. For $N=3$, asymptotic expansions for the perturbed Neumann eigenvalues $\mu_n(\Omega_\e)$ are obtained in \cite{nazarov2011} in the case of a hole shrinking to a point: more precisely, the expansion
\[
    \mu_n(\Omega_\e)=\mu_n(\Omega)+ C_n\e^3+o(\e^3)\quad\text{as }\e\to 0,
\]
is proved, where $C_n\in\R$ is explicitly characterized  in \parencite[(2.46)]{nazarov2011}. A more general framework is taken into account in \cite{jimbo}: here, 
the removed holes are tubular neighborhoods of $d$-dimensional manifolds $\mathcal{M}\sub\Omega$, i.e.
\begin{equation}\label{eq:tubular}
    \Sigma_\e=\left\{ x\in\Omega\colon \dist (x,\mathcal{M})<\e\right\},
\end{equation}
and both Neumann and Robin conditions on $\partial\Sigma_\e$ are considered. 
Denoting $q:=N-d\geq 2$, if $\Lambda_n(\Omega)$ is a simple eigenvalue of the Dirichlet Laplacian in $\Omega$ (with a $L^2$-normalized eigenfunction $u_n$) and $\Lambda_{n,\e}$ is the $n$-th eigenvalue of \eqref{eq:mixed} with $\Sigma_\e$ as in \eqref{eq:tubular}, then
\cite{jimbo} proves the expansion
\[
    \Lambda_{n,\e}=\Lambda_n(\Omega)-\omega_q\,\e^q\int_{\mathcal{M}}\left[\frac{q}{q-1}\abs{\nabla_{\perp}u_n}^2+\abs{\nabla_{\mathcal{M}}u_n}^2-\Lambda_n(\Omega)u_n^2+u_n H[u_n] \right]\d \mathcal{H}^d+o(\e^q)
\]
as $\e\to 0$, where
\[
    H[u_n](x):=\lim_{t\to 0}\frac{u_n(x+tH(x))-u_n(x)}{t}
\]
and $H(x)$ denotes the mean curvature vector field on $\mathcal{M}$. 
For Neumann conditions prescribed on both $\partial\Omega$ and the hole's boundary $\partial\Sigma_\e$, full asymptotic expansions in terms of analytic functions are obtained in \cite{LdC2012}, in the case of a hole shrinking to a point. We also cite \cite{kabeya}, where Neumann eigenvalues are studied for  a zonal subdomain of the $N$-sphere, which converges to the whole sphere itself. Finally, it is worth mentioning \cite{BurenkovDavies,LMS2013,BGU2016} for other quantitative spectral stability results for the Neumann Laplacian and \cite{hempel,Kozlov,colette2021,BSH2022,cardone,barseghyan2023magnetic} for 
 qualitative studies for more general types of holes.

\medskip
As emerged from the previous discussion, the stability of Neumann eigenvalues in domains with small holes is not understood as well as in the Dirichlet case and presents different and peculiar features. For instance, stability of the spectrum of the Neumann Laplacian is not guaranteed under assumptions that would otherwise ensure stability in the Dirichlet case, see e.g. \parencite[p. 420]{Courant-Hilbert} or \cite{schweizer}. 
In \cite{Rauch1975} (see also \cite{colette2021}) it is proved that, by removing sets $\overline{\Sigma_\e}$ whose measure tends to zero as $\e\to0$, a sufficient condition for stability is a 
uniform extension property (in the Sobolev sense) inside the hole, which 
rules out too wild behaviors of the disappearing hole itself; see assumption 
{\bf (H)} in section \ref{sec:mainresults}.
The main novelty of the present paper lies in the identification of a geometric quantity, related to the shape of the hole $\overline{\Sigma_\e}$ and to the limit eigenfunction $\varphi_n$, that plays in the Neumann context the same role as the capacity does for the Dirichlet case, concerning quantitative spectral stability. This quantity, denoted as $\mathcal{T}_{\overline{\Omega}\setminus\Sigma_\e}(\partial\Sigma_\e,\partial_\nnu\varphi_n)$, is introduced in \Cref{def:sobolev_torsion} and resembles a notion of torsional rigidity.

Now, we provide a description of the most significant contents of the present paper, referring  to section \ref{sec:mainresults} for the rigorous  statements. 
Our first main result \Cref{thm:main1} contains an  asymptotic expansion
 for an eigenvalue $\lambda_n(\Omega\setminus\overline{\Sigma_\e})$ of the perturbed Neumann problem \eqref{eq:Peps}, in case it converges to a simple  eigenvalue $\lambda_n(\Omega)$ of the  unperturbed one \eqref{eq:P0}. In the asymptotic expansion of the variation 
 $\lambda_n(\Omega\setminus\overline{\Sigma_\e})-\lambda_n(\Omega)$,
  the sum of two contributions appears:
  the geometric quantity
\begin{equation}\label{eq:comm_1}
      -      \mathcal{T}_{\overline{\Omega}\setminus\Sigma_\e}(\partial\Sigma_\e,\partial_\nnu\varphi_n)<0 
\end{equation}
which always has a negative sign, and the additional term
\begin{equation}\label{eq:comm_2}
    -\int_{\Sigma_\e}\left(\abs{\nabla\varphi_n}^2-(\lambda_n(\Omega)-1)\varphi_n^2 \right)\dx,
\end{equation}
whose sign depends on where the hole is located, with respect to the nodal, regular and singular sets of $\varphi_n$.
The presence of this additional term causes the eigenvalue variation  not to have a fixed sign, in stark contrast to what happens in the Dirichlet setting, where the monotonicity of the eigenvalues with respect to  the inclusion of domains  always results in positive differences
$\Lambda_n(\Omega\setminus\overline{\Sigma_\e})-\Lambda_n(\Omega)$.

Next, we focus on holes shrinking to a point by maintaining the same fixed shape, that is
of the form
\begin{equation}\label{eq:hole-shrinking}
    \Sigma_\e:= x_0+\e\Sigma=\{x_0+\e x\colon x\in \Sigma\},
\end{equation}
for some $x_0\in\Omega$ and $\Sigma\sub\R^N$. In this case, if $N\geq3$, we succeed in performing a blow-up analysis, which  provides the explicit rate of convergence of the quantity \eqref{eq:comm_1}. Moreover, by analyzing the behavior of the limit eigenfunction $\varphi_n$ near the point $x_0$, we determine the explicit rate of convergence of the additional term \eqref{eq:comm_2}. Combining these two sharp expansions, we obtain our second and third main results, namely \Cref{thm:blowup_intr} and \Cref{thm:blow_up_eigenfunct}, which provide, for $N\geq3$ and the hole being as in \eqref{eq:hole-shrinking}, a precise description of the asymptotic behaviour of eigenvalues and eigenfunctions, respectively. 

From  \Cref{thm:blowup_intr} we can  deduce some interesting information about the sign of the eigenvalue variation  and the sharpness of the derived expansion. 
Notably, these aspects appear to depend on whether $x_0$ 
is  or is not located on the singular set of the limit eigenfunction, and, in the latter case, on its specific positioning relative to the interface $\Gamma$ introduced in  \eqref{eq:def_Gamma}.
See Remark \ref{rem:Gamma} for details.

Finally, in Theorems \ref{thm:spher} and \ref{thm:spher-Dim2}  we derive more explicit expansions in the case of spherical holes, in dimensions $N\geq3$ and $N=2$ respectively.

\section{Statement of the main results}\label{sec:mainresults}
For any open, bounded, connected, Lipschitz set $\Omega\sub\R^N$, $N\geq2$, we consider  the following eigenvalue 
problem: 
\begin{equation}\label{eq:P0}
\begin{bvp}
-\Delta \varphi +\varphi&=\lambda\varphi, &&\text{in }\Omega, \\
\partial_{\nnu}\varphi &=0, &&\text{on }\partial\Omega,
\end{bvp}
\end{equation}
where $\nnu$ denotes the outer unit normal vector to $\partial\Omega$. Problem \eqref{eq:P0}  is meant  in a weak sense; i.e.,  $\lambda\in\R$ is an \emph{eigenvalue} if there exists $u\in H^1(\Omega)\setminus\{0\}$, called \emph{eigenfunction}, such that
\begin{equation}\label{eq:weak_form}
    \int_\Omega(\nabla u\cdot\nabla \varphi+u\varphi)\dx=\lambda\int_\Omega u\varphi\dx,\quad\text{for all }\varphi\in H^1(\Omega).
\end{equation}
 In view of the compact embedding $H^1(\Omega)\hookrightarrow L^2(\Omega)$, classical spectral theory ensures the existence of a diverging sequence of eigenvalues
\[
        0<1=\lambda_0(\Omega)< \lambda_1(\Omega)\leq\lambda_2(\Omega)\leq \cdots\leq \lambda_n(\Omega)\leq \cdots.
\]
It is evident that the eigenvalues $\{\mu_n(\Omega)\}$ of the standard Neumann Laplacian can be obtained from those of  \eqref{eq:P0} with a translation, i.e.
\begin{equation*}
    \mu_n(\Omega):=\lambda_n(\Omega)-1,
\end{equation*}
the eigenfunctions being  exactly the same. Moreover, since $\lambda_0(\Omega)$ is  equal to $1$ for any choice of the set $\Omega$, in the following we only  consider  eigenvalues with index $1$ or higher.

Let us now perturb  $\Omega$,  by removing a small hole from the interior. More precisely,  we consider  a family $\{\Sigma_\e\}_{\e\in(0,\e_0)}$ of subsets of $\R^N$ satisfying the following assumption.

\medskip\noindent{\bf Assumption (H).}
    We assume that, for some $\e_0>0$,
\begin{align}
\label{eq:ass_sigma_1}\tag{H1} &\text{for every $\e\in (0,\e_0)$, $\Sigma_\e$ is an open, Lipschitz set such that $\overline{\Sigma_\e}\sub\Omega$};\\
\label{eq:ass_sigma_3}\tag{H2} &\text{for every $\e\in(0,\e_0)$, there exists $\Ee\colon H^1(\Omega\setminus\overline{\Sigma_\e})\to H^1(\Omega)$ such that\hspace{3cm}}\\
\notag & \qquad(\Ee u)\restr{\Omega\setminus\overline{\Sigma_\e}}=u \quad\text{and}\quad 
            \norm{\Ee u}_{H^1(\Omega)}\leq\mathfrak{C}\norm{u}_{H^1(\Omega\setminus\overline{\Sigma_\e})} \quad\text{for all $u\in H^1(\Omega\setminus\overline{\Sigma_\e})$},\\
&\notag\text{where  $\mathfrak{C}>0$ is a constant independent  of $\e\in (0,\e_0)$;}\\
 \label{eq:ass_sigma_4}\tag{H3} &\text{$\abs{\Sigma_\e}\to 0$ as $\e\to 0$ (where $\abs{\,\cdot\,}$ denotes the $N$-dimensional Lebesgue measure).}
\end{align}
\smallskip\noindent For every $\e\in (0,\e_0)$, we denote the perturbed domain by
\begin{equation*}
    \Omega_\e:=\Omega\setminus\overline{\Sigma_\e},
\end{equation*}
and consider the  perturbed problem
\begin{equation}\label{eq:Peps}
    \begin{bvp}
        -\Delta\varphi+\varphi&=\lambda\varphi, &&\text{in }\Omega_\e, \\
        \partial_\nnu\varphi &=0, &&\text{on }\partial\Omega_\e,
    \end{bvp}
\end{equation}
meant in a weak sense as in \eqref{eq:weak_form}. This  produces  the perturbed spectrum,  which consists of an increasing diverging sequence 
$\{\lambda_j(\Omega_\e)\}_{j\in\N}$. 
In \cite[Theorem 3.1]{Rauch1975} Rauch and Taylor prove that,  under
assumption {\bf (H)}, 
\begin{equation}\label{eq:conv-eigenvalues}
	\lambda_{j}(\Omega_\e)\to \lambda_j(\Omega)\quad\text{as }\e\to 0,\quad \text{for all }j\in\N.
\end{equation}
Moreover, by classical spectral theory, there exist
\begin{equation}\label{eq:bases}
\{\varphi_j\}_{j\geq 0}\sub H^1(\Omega)\quad\text{and}\quad\{\varphi_j^\e\}_{j\geq 0}\sub H^1(\Oe)
\end{equation}
orthonormal bases of $L^2(\Omega)$, respectively $L^2(\Omega_\e)$, 
such that, for every $j$, $\varphi_j$ and $\varphi_j^\e$ are eigenfunctions associated to $\lambda_j(\Omega)$ and $\lambda_j(\Omega_\e)$, respectively. 

Hereafter, we fix $n\in\N\setminus\{0\}$ such that
\begin{equation}\label{eq:simple}
    \lambda_n(\Omega)\text{ is simple}.
\end{equation}
A key role in our asymptotic expansion is played by the geometric quantity defined below, which provides a measurement of
the hole $\Sigma_\e$ and resembles the notion of torsional rigidity of a set; see e.g. \cite{henrot2018,PZ1951} for the classical notion of \emph{torsional rigidity} and \cite{BGI} for the \emph{boundary torsional rigidity}. 
\begin{definition}\label{def:sobolev_torsion}
Let $E\sub\R^N$ be an open Lipschitz set such that
$\overline{E}\subset\Omega$ and  $f\in L^2(\partial E)$. Let
\begin{equation*}
J_{\Omega,E,f}:H^1(\Omega\setminus\overline{E})\to\R,\quad 
J_{\Omega,E,f}(u):=\frac{1}{2}\int_{\Omega\setminus \overline{E}}(\abs{\nabla u}^2+u^2)\dx
-\int_{\partial E}uf\ds.
\end{equation*}
We define the \emph{Sobolev $f$-torsional rigidity of 
$\partial E$ relative to $\overline{\Omega}\setminus E$} 
(briefly, the \emph{$f$-torsional rigidity of $\partial E$})
as 
\begin{equation*}
\mathcal{T}_{\overline{\Omega}\setminus E}(\partial E,f):
=-2\inf\left\{J_{\Omega,E,f}(u)\colon u\in H^1(\Omega\setminus\overline{E})\right\} .
\end{equation*}
\end{definition}
By standard minimization arguments,  there exists a unique 
$U_{\Omega,E,f}\in H^1(\Omega \setminus\overline{E})$ achieving the infimum defining $\mathcal{T}_{\overline{\Omega}\setminus E}(\partial E,f)$, i.e. such that 
\begin{equation}\label{eq:U-OEf}
\mathcal{T}_{\overline{\Omega}\setminus E}(\partial E,f)=-2 J_{\Omega,E,f}(U_{\Omega,E,f}),    
\end{equation}
see Proposition \ref{prop:torsion_functions}.
We also recall the definition of Sobolev capacity of a set.
\begin{definition}\label{def:capacity}
    Let $K\sub\R^N$ be a compact set. The \emph{Sobolev capacity} of $K$ is defined as
        \[
        \mathrm{Cap}\,(K):=\inf\left\{\int_{\R^N}\!\!\left(|\nabla u|^2+u^2\right)\dx\colon u\in H^1(\R^N),~u=1\text{ a.e. in an open neighborhood of }K\right\}.
    \]
\end{definition}
If the family $\{\Sigma_\e\}_{\e\in(0,\e_0)}$ satisfies  {\bf (H)} and 
\begin{equation}\label{eq:cap-to-0}
    \lim_{\e\to 0}\mathrm{Cap}\,(\overline{\Sigma_\e})=0,
\end{equation}
under assumption \eqref{eq:simple} 
it is possible to uniquely choose the $n$-th eigenfunction of the orthonormal basis 
$\{\varphi_j^\e\}_{j\geq 0}$ in \eqref{eq:bases} in such a way that
\begin{equation}\label{eq:choose-phi-eps-1}
   \int_{\Omega_\e}\varphi_n^\e\varphi_n\dx\geq0\quad \text{for $\e$ sufficiently small}.
\end{equation}
If $\varphi_n^\e$ is chosen as above, one can prove that
\begin{equation}\label{eq:choose-phi-eps-2}
    \norm{\varphi_n^\e-\varphi_n}_{H^1(\Omega_\e)}\to 0\quad\text{as }\e\to 0,
\end{equation}
see Lemma \ref{l:con-auto}.
Furthermore, assumption {\bf (H)} implies that 
\begin{equation*}
    \mathcal{T}_{\overline{\Omega}_\e}(\partial\Sigma_\e,\partial_\nnu\varphi_n)\to 0\quad\text{as }\e\to 0,
\end{equation*}
see Corollary \ref{cor:torsion-to-zero}.

\begin{remark}\label{rem:cap-non-to-zero}
It could happen that \eqref{eq:conv-eigenvalues} holds true while \eqref{eq:cap-to-0} fails (in contrast to what happens in the Dirichlet case).
An example of this phenomenon can be found in \cite[Section 4]{Rauch1975}. More precisely, for every $j\in\N\setminus\{0\}$, let $E_j$ be the union of $j$ disjoint open balls of radius $r_j>0$, evenly spaced inside  a bounded region $\mathcal U\subset\R^N$. Let us choose the radii $r_j$ in such a way that $\lim_{j\to\infty}j r_j^N=0$ (so that $|E_j|\to0$) and 
\begin{equation}\label{eq:cond-solid}
\begin{cases}
    \lim_{j\to\infty}j r_j^{N-2}=+\infty,&\text{if }N\geq3,\\
    \lim_{j\to\infty}\frac{j}{|\log r_j|}=+\infty,&\text{if }N=2.
\end{cases}
\end{equation}
In \cite[Section 4]{Rauch1975} it is observed that, under condition \eqref{eq:cond-solid}, $K_j=\overline{E_j}$ \emph{becomes solid} in $\mathcal U$ as $j\to\infty$, i.e. 
\begin{equation*}
    \alpha_j:=\inf\left\{\frac{\int_{\mathcal U\setminus \overline{E_j}}|\nabla v|^2\dx}{\int_{\mathcal U\setminus \overline{E_j}}v^2\dx}:v\in H^1(\mathcal U\setminus \overline{E_j}), v=0\text{ on }\partial E_j\right\}
    \mathop{\longrightarrow}\limits_{j\to\infty}+\infty.
\end{equation*}
This implies that it cannot happen that 
$\lim_{j\to\infty}\mathrm{Cap}\,(\overline{E_j})=0$. 
Indeed, let us argue by contradiction and assume that $\lim_{j\to \infty}\mathrm{Cap}\,(\overline{E_j})=0$. Then, for every $j\in\N\setminus\{0\}$ there exists $u_j\in H^1(\R^N)$ such that $u_j=1$ a.e. in an open neighborhood of $\overline{E_j}$ and $\lim_{j\to\infty}\|u_j\|_{H^1(\R^N)}=0$. Let $\eta\in C^\infty_{\rm c}(\R^N)$ be such that $\eta\equiv 1$ in $\mathcal U$. Then 
\begin{equation*}
\frac{ \int_{\mathcal U}|\nabla (u_j-\eta)|^2\dx}{\int_{\mathcal U}(u_j-\eta)^2\dx}
=\frac{ \int_{\mathcal U\setminus \overline{E_j}}|\nabla (u_j-\eta)|^2\dx}{\int_{\mathcal U\setminus \overline{E_j}}(u_j-\eta)^2\dx}\geq \alpha_j
\end{equation*}
and a contradiction arises letting $j\to\infty$, since the left hand side converges to $\frac{1}{|\mathcal U|} \int_{\mathcal U}|\nabla \eta|^2\dx$.

On the other hand, the sequence of sets $\{E_j\}_j$ satisfies assumption {\bf (H)}, so that \cite[Theorem 3.1]{Rauch1975} ensures spectral stability as $j\to\infty$ for the Neumann problem under removal of the sets $E_j$.
\end{remark}

Our first result provides an  asymptotic expansion of a perturbed eigenvalue  (and  its corresponding eigenfunction) in the case it  converges to a simple eigenvalue of the limit problem.
\begin{theorem}\label{thm:main1}
            Let $n\geq 1$ be such that \eqref{eq:simple} is satisfied. Let $\{\Sigma_\e\}_{\e\in(0,\e_0)}$ satisfy assumptions {\bf (H)} and \eqref{eq:cap-to-0}.
 Then 
    \begin{multline}\label{eq:asym_eigenvalues}
        \lambda_n(\Omega_\e)=\lambda_n(\Omega)-\mathcal{T}_{\overline{\Omega}_\e}(\partial\Sigma_\e,\partial_\nnu\varphi_n)
        -\int_{\Sigma_\e}\left(\abs{\nabla\varphi_n}^2-(\lambda_n(\Omega)-1)\varphi_n^2 \right)\dx
        \\
        +o\left(\mathcal{T}_{\overline{\Omega}_\e}(\partial\Sigma_\e,\partial_\nnu\varphi_n)\right)+o\left(
        \int_{\Sigma_\e}\left(\abs{\nabla\varphi_n}^2-(\lambda_n(\Omega)-1)\varphi_n^2 \right)\dx
        \right)
        \quad\text{as $\e\to 0$}.
    \end{multline}
      In addition, if $U_\e:=U_{\Omega,\Sigma_\e,\partial_\nnu\varphi_n}\in H^1(\Omega_\e)$ denotes the function achieving $\mathcal{T}_{\overline{\Omega}_\e}(\partial\Sigma_\e,\partial_\nnu\varphi_n)$ as in \eqref{eq:U-OEf}
      and $\varphi_n^\e$ is chosen as in \eqref{eq:choose-phi-eps-1},    
       then 
   \begin{equation}\label{eq:asym_eigenfunctions}
           \norm{\varphi_n^\e-(\varphi_n-U_\e)}_{H^1(\Omega_\e)}^2=o\left(\mathcal{T}_{\overline{\Omega}_\e}(\partial\Sigma_\e,\partial_\nnu\varphi_n)\right) +O(\|\varphi_n\|_{L^2(\Sigma_\e)}^4)\quad \text{as }\e\to0
       \end{equation}
and 
\begin{multline}\label{eq:asym_eigenfunctions-2}
           \|\varphi_n^\e-\varphi_n\|_{H^1(\Omega_\e)}^2=\mathcal{T}_{\overline{\Omega}_\e}(\partial\Sigma_\e,\partial_\nnu\varphi_n)+o\left(\mathcal{T}_{\overline{\Omega}_\e}(\partial\Sigma_\e,\partial_\nnu\varphi_n)\right) \\+O(\|\varphi_n\|_{L^2(\Sigma_\e)}^4)
+O\left(\|\varphi_n\|_{L^2(\Sigma_\e)}^2\sqrt{\mathcal{T}_{\overline{\Omega}_\e}(\partial\Sigma_\e,\partial_\nnu\varphi_n)}\right)
           \quad \text{as }\e\to0.
       \end{multline}
\end{theorem}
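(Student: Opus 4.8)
The plan is to derive the eigenvalue expansion \eqref{eq:asym_eigenvalues} through a careful energy/variational argument comparing the perturbed eigenfunction $\varphi_n^\e$ with the natural competitor $\varphi_n - U_\e$, where $U_\e$ is the torsion function of $\partial\Sigma_\e$ with data $\partial_\nnu\varphi_n$. First I would record the variational characterization: by the min–max principle and the simplicity of $\lambda_n(\Omega)$, together with the spectral convergence \eqref{eq:conv-eigenvalues} and the eigenfunction convergence \eqref{eq:choose-phi-eps-2}, one has $\lambda_n(\Omega_\e) = R_\e(\varphi_n^\e)$, where $R_\e$ is the Rayleigh quotient $\int_{\Omega_\e}(|\nabla u|^2 + u^2)\,\dx / \int_{\Omega_\e} u^2\,\dx$. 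The strategy is to obtain a two-sided estimate: an \emph{upper bound} by plugging a suitable test function (essentially $\varphi_n - U_\e$, possibly corrected to be orthogonal to lower eigenfunctions of $\Omega_\e$) into $R_\e$, and a \emph{lower bound} by exploiting that $\varphi_n^\e$ is close to $\varphi_n$ and estimating the defect from being an eigenfunction of the unperturbed problem.

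For the upper bound, I would compute $\int_{\Omega_\e}(|\nabla(\varphi_n-U_\e)|^2 + (\varphi_n-U_\e)^2)\,\dx$. Expanding the square and using that $\varphi_n$ solves \eqref{eq:P0} (so $\int_{\Omega_\e}(\nabla\varphi_n\cdot\nabla v + \varphi_n v)\,\dx = \lambda_n(\Omega)\int_{\Omega_\e}\varphi_n v\,\dx + \int_{\partial\Sigma_\e}(\partial_\nnu\varphi_n)v\,\ds$ for test functions $v$, the boundary term coming from integration by parts on $\Omega_\e$ because the outer normal to $\partial\Sigma_\e$ as part of $\partial\Omega_\e$ points into $\Sigma_\e$), and using the Euler–Lagrange equation for $U_\e$ (namely $\int_{\Omega_\e}(\nabla U_\e\cdot\nabla v + U_\e v)\,\dx = \int_{\partial\Sigma_\e}(\partial_\nnu\varphi_n)v\,\ds$ for all $v\in H^1(\Omega_\e)$, which is exactly $\mathcal T_{\overline\Omega_\e}(\partial\Sigma_\e,\partial_\nnu\varphi_n) = \int_{\Omega_\e}(|\nabla U_\e|^2 + U_\e^2)\,\dx$ by \eqref{eq:U-OEf}), the cross terms collapse. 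One finds the numerator equals $\lambda_n(\Omega) - (\lambda_n(\Omega)-1)\int_{\Sigma_\e}\varphi_n^2\,\dx - \int_{\Sigma_\e}|\nabla\varphi_n|^2\,\dx - \mathcal T_{\overline\Omega_\e}(\partial\Sigma_\e,\partial_\nnu\varphi_n)$ up to terms that are quadratically small, while the denominator $\int_{\Omega_\e}(\varphi_n-U_\e)^2\,\dx = 1 - \int_{\Sigma_\e}\varphi_n^2\,\dx - 2\int_{\Omega_\e}\varphi_n U_\e\,\dx + \int_{\Omega_\e}U_\e^2\,\dx$; dividing gives the claimed leading terms. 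To make this a legitimate upper bound for $\lambda_n(\Omega_\e)$ rather than just for some higher eigenvalue, I would project $\varphi_n-U_\e$ onto the orthogonal complement of $\mathrm{span}\{\varphi_0^\e,\dots,\varphi_{n-1}^\e\}$ in $L^2(\Omega_\e)$ and check, using \eqref{eq:choose-phi-eps-2}-type convergence for the lower eigenfunctions and the smallness of $U_\e$ and $\|\varphi_n\|_{L^2(\Sigma_\e)}$, that this correction changes the Rayleigh quotient only by lower-order terms.

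For the matching lower bound (and simultaneously the eigenfunction estimates), I would write $\varphi_n^\e = a_\e(\varphi_n - U_\e) + w_\e$ with $w_\e$ $L^2(\Omega_\e)$-orthogonal to $\varphi_n - U_\e$, use that $\varphi_n^\e$ is a genuine eigenfunction to test \eqref{eq:Peps} against $\varphi_n - U_\e$ and against $w_\e$ itself, and combine with the spectral gap (simplicity of $\lambda_n(\Omega)$ plus convergence, giving a uniform gap between $\lambda_n(\Omega_\e)$ and the neighbouring eigenvalues) to bound $\|w_\e\|_{H^1(\Omega_\e)}^2$ in terms of the residual, which is controlled by $\mathcal T_{\overline\Omega_\e}(\partial\Sigma_\e,\partial_\nnu\varphi_n)$ and $\|\varphi_n\|_{L^2(\Sigma_\e)}^2$. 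This coercivity-on-the-orthogonal-complement step is the standard Lyapunov–Schmidt / spectral-gap estimate; it yields \eqref{eq:asym_eigenfunctions} once one also checks $a_\e \to 1$ via \eqref{eq:choose-phi-eps-1}, and then \eqref{eq:asym_eigenfunctions-2} follows by writing $\varphi_n^\e - \varphi_n = (\varphi_n^\e - (\varphi_n - U_\e)) - U_\e$, expanding the square, using $\|U_\e\|_{H^1(\Omega_\e)}^2 = \mathcal T_{\overline\Omega_\e}(\partial\Sigma_\e,\partial_\nnu\varphi_n)$, and Cauchy–Schwarz for the cross term (which produces the $O(\|\varphi_n\|_{L^2(\Sigma_\e)}^2\sqrt{\mathcal T})$ term). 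Feeding $\|w_\e\|^2 = o(\mathcal T) + O(\|\varphi_n\|_{L^2(\Sigma_\e)}^4)$ back into the Rayleigh quotient for $\varphi_n^\e$ closes the lower bound and pins down \eqref{eq:asym_eigenvalues}.

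The main obstacle I anticipate is \emph{controlling the various error terms uniformly and showing they are genuinely $o$ of the two displayed quantities}, rather than merely small. Specifically: (i) the cross term $\int_{\Omega_\e}\varphi_n U_\e\,\dx$ in the denominator must be shown to be $o(\mathcal T_{\overline\Omega_\e}(\partial\Sigma_\e,\partial_\nnu\varphi_n)) + o(\|\varphi_n\|_{L^2(\Sigma_\e)}^2)$ — plausibly via $\int_{\Omega_\e}\varphi_n U_\e\,\dx = \lambda_n(\Omega)^{-1}$ times an integral that reduces to a boundary term controllable by $\|U_\e\|$, or by a direct Cauchy–Schwarz combined with a refined bound on $\|U_\e\|_{L^2}$ exploiting that $U_\e$ is concentrated near the shrinking hole; (ii) verifying that the torsional rigidity and the $\Sigma_\e$-integral are the genuine competing scales, so that $o(\mathcal T) + o(\int_{\Sigma_\e}(\cdots))$ is the honest remainder and not, say, a cross-scale term like $\sqrt{\mathcal T}\,\|\varphi_n\|_{L^2(\Sigma_\e)}$ sneaking in at leading order — here one uses that $\int_{\Sigma_\e}(|\nabla\varphi_n|^2 - (\lambda_n(\Omega)-1)\varphi_n^2)\,\dx$ and $\mathcal T_{\overline\Omega_\e}(\partial\Sigma_\e,\partial_\nnu\varphi_n)$ both tend to zero (Corollary \ref{cor:torsion-to-zero}) and absorbs products via Young's inequality; and (iii) making the min–max upper bound rigorous despite the domain of the Rayleigh quotient changing with $\e$, which is handled by the orthogonalization against the first $n$ perturbed eigenfunctions together with their $H^1$-convergence. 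None of these is conceptually deep, but bookkeeping the orders correctly is where the real work lies.
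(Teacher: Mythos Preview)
Your approach is essentially the same as the paper's: both use $\varphi_n - U_\e$ as the approximating test function and exploit the spectral gap around $\lambda_n(\Omega)$ to control the defect. The paper packages your Lyapunov--Schmidt step into an abstract ``Lemma on small eigenvalues'' due to Colin de Verdi\`ere (their Lemma~\ref{lemma:sm_eig}), applied with $q(u,v)=\int_{\Omega_\e}(\nabla u\cdot\nabla v+uv)\dx-\lambda_n\int_{\Omega_\e}uv\dx$ and $f=(\varphi_n-U_\e)/\|\varphi_n-U_\e\|_{L^2}$; the key computation is that for this choice the residual collapses to $q(f,v)=\frac{\lambda_n}{\|\varphi_n-U_\e\|}\int_{\Omega_\e}U_\e v\dx$, so $\delta\leq C\|U_\e\|_{L^2(\Omega_\e)}$, and the abstract lemma then gives $|\lambda_n^\e-\lambda_n-\xi_\e|\leq C\|U_\e\|_{L^2}^2$ and $\|f-\Pi_\e f\|_{L^2}^2\leq C\|U_\e\|_{L^2}^2$ directly, after which $\xi_\e$ is expanded exactly as you do.

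The one point where your outline is genuinely incomplete is obstacle (i): you correctly flag that a refined bound on $\|U_\e\|_{L^2}$ is needed, but you do not have a mechanism for it, and without it the remainder is only $O(\mathcal T_\e)$, not $o(\mathcal T_\e)$. The paper's Lemma~\ref{lemma:norm2} supplies exactly this: $\|U_\e\|_{L^2(\Omega_\e)}^2=o(\mathcal T_\e)$ as $\e\to0$. The proof is a compactness/contradiction argument that uses hypothesis \eqref{eq:cap-to-0} in an essential way: if along a subsequence $\|U_{\e_j}\|_{L^2}^2\geq C\,\mathcal T_{\e_j}$, one extends and normalizes in $L^2$, extracts a weak $H^1(\Omega)$ limit $W$ with $\|W\|_{L^2}=1$, and then uses test functions vanishing near $\overline{\Sigma_{\e_j}}$ (built from the capacity potentials, which is where \eqref{eq:cap-to-0} enters) to pass to the limit in the equation for $U_\e$ and obtain $\int_\Omega(\nabla W\cdot\nabla v+Wv)\dx=0$ for all $v$, forcing $W=0$, a contradiction. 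This single lemma then resolves all three of your obstacles at once: the cross term $\int_{\Omega_\e}\varphi_n U_\e\dx=O(\|U_\e\|_{L^2})=o(\sqrt{\mathcal T_\e})$, the denominator expansion, and the $o(\mathcal T_\e)$ remainder in both \eqref{eq:asym_eigenvalues} and \eqref{eq:asym_eigenfunctions}. Your heuristic that ``$U_\e$ is concentrated near the shrinking hole'' is morally right but is not what the argument uses; the actual input is the vanishing capacity, and the proof would fail under assumption {\bf(H)} alone (cf.\ Remark~\ref{rem:cap-non-to-zero}).
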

Let us briefly describe the basic idea behind the proof of \Cref{thm:main1}. We consider the function
\[
    f_\e:=\varphi_n-U_\e.
\]
It turns out that $f_\e$ is a good approximation of the perturbed eigenfunction $\varphi_n^\e$, while encoding information from given quantities (the unperturbed eigenfunction $\varphi_n$ and the hole $\Sigma_\e$). Indeed, since by assumption 
\[
    \norm{U_\e}_{H^1(\Omega_\e)}^2=\mathcal{T}_{\overline{\Omega}_\e}(\partial\Sigma_\e,\partial_\nnu\varphi_n)\to 0\quad\text{as }\e\to 0,
\]
see Remark \ref{rmk:torsion_norm}, then $f_\e$ is close to $\varphi_n$ for small $\e$. Moreover, it satisfies an equation rather similar to that of $\varphi_n^\e$, i.e.
\[
    \begin{bvp}
        -\Delta f_\e+f_\e&=\lambda_n(\Omega)\varphi_n, && \text{in }\Omega_\e, \\
        \partial_\nnu f_\e &=0, &&\text{on }\partial\Sigma_\e.
    \end{bvp}
\]
By estimating the difference $\varphi_n^\e-f_\e$, through an abstract result known in the literature as \emph{Lemma on small eigenvalues} (originally proved in \cite{ColindeV1986}), see \Cref{lemma:sm_eig}, we obtain the expansion of the eigenvalue variation stated in \Cref{thm:main1}.

\medskip
Theorem \ref{thm:main1} applies to a fairly general framework and provides an expansion in terms of $\mathcal{T}_{\overline{\Omega}_\e}(\partial\Sigma_\e,\partial_\nnu\varphi_n)$ and $\int_{\Sigma_\e}(\abs{\nabla\varphi_n}^2-(\lambda_n(\Omega)-1)\varphi_n^2 )\dx$. We now direct our attention towards the asymptotic behaviour of these quantities, with the aim of deriving  an explicit  expansion of the eigenvalue variation in some relevant examples: the case of a hole shrinking to an interior point in  dimension $N\geq 3$ and the case of a disk-shaped hole in dimension $N=2$. 

Let us fix $x_0\in\Omega$ and an open, bounded, Lipschitz set $\Sigma\sub\R^N$, and consider a hole $\Sigma_\e=x_0+\e \Sigma$ as in \eqref{eq:hole-shrinking}.
In this case, the family $\{\Sigma_\e\}$ is concentrating  to the point $x_0$ by shrinking and maintaining the same shape $\Sigma$. Let us assume that, for some  $r_0>0$ and $\e_0>0$,
\begin{equation}\label{eq:inOmega}
x_0+\overline{B_{r_0}}\subset\Omega,\quad
B_{r_0}\setminus \e\overline{\Sigma}\text{ is connected and }
x_0+\e\overline{\Sigma}\subset x_0+B_{r_0}
\quad\text{for all }\e\in(0,\e_0),
\end{equation}
where $B_{r_0}:=\{x\in \R^N:|x|<r_0\}$ is the ball in $\R^N$ with center at $0$ and radius $r_0$.
The family $\{\Sigma_\e\}_{\e\in(0,\e_0)}$ turns out to satisfy 
assumptions {\bf (H)} and \eqref{eq:cap-to-0},  see Lemma \ref{lemma:ext};   therefore, \Cref{thm:main1} applies. Hence, if $\lambda_n(\Omega)$ is simple, the problem of finding explicit asymptotic expansions for the perturbed eigenvalue $\lambda_n(\Omega_\e)$ boils down to the analysis of  the behavior of the quantities
\begin{equation*}
    \mathcal{T}_{\overline{\Omega}_\e}(\partial\Sigma_\e,\partial_\nnu\varphi_n)\quad\text{and}\quad
    \int_{\Sigma_\e}\left(\abs{\nabla\varphi_n}^2-(\lambda_n(\Omega)-1)\varphi_n^2 \right)\dx
\end{equation*}
as $\e\to 0$.  Similarly to what happens in other singularly perturbed spectral problems (see e.g. \cite{AFHL,AO2023,AFN2020,FNO1,FNO22}), the local behavior of the normalized eigenfunction $\varphi_n$ (which is unique, up to a sign) near the point $x_0$ plays a crucial role, as described below.

For every $u\in C^\infty(\Omega)$, $y\in\Omega$ and $i\in \N$, we consider the polynomial 
\begin{equation}\label{eq:polinomi}
	P_{y,i}^u(x):=\sum_{\substack{\beta\in\N^N \\\abs{\beta}=i}}\frac{1}{\beta!}D^\beta u(y)\,x^\beta,\quad x\in\R^N,
\end{equation}
 where $|\beta|=\beta_1+\ldots +\beta_N$ and $\beta! = \beta_1!\cdot \ldots \cdot \beta_N!$ 
for all $\beta=(\beta_1,\dots,\beta_N) \in \N^N$,
with the tacit convention that
\begin{equation}\label{eq:polinomi-0}
P_{y,0}^u(x) := u(y)\quad\text{for all }x\in\R^N.
\end{equation}
In the case $y=0$, we drop the index and write
\begin{equation}\label{eq:index0}
    P_i^u:=P_{0,i}^u.
\end{equation}
\begin{definition}\label{def:vanish}
Let $u\in C^\infty(\Omega)$.  We say that $u$ \emph{vanishes of order $k\in\N$ at $y$} if 
\[
P_{y,k}^u(x)\not\equiv 0\quad\text{and}\quad 
P_{y,i}^u(x)\equiv 0\quad\text{for all $i<k$}.
\]
\end{definition}
We observe that every nontrivial solution to problem \eqref{eq:P0} is analytic in $\Omega$, hence, at any point $y\in\Omega$, it vanishes with some finite order $k\in\N$ (which is $0$ if $u(y)\neq0$) in the sense of Definition \ref{def:vanish}. 
For every analytic function $u:\Omega\to\R$, the nodal set of $u$ is defined as 
\[
    \mathcal{Z}(u):=\bigcup_{k=1}^\infty\mathcal{Z}_k(u),
\]
where,  for every $k\in\N\setminus \{0\}$, 
\[
    \mathcal{Z}_k(u):=\{x\in\Omega\colon u~\text{vanishes of order $k$ at $x$}\}.
\]
We define the \emph{regular  part} of the nodal set  as
\[
    \mathrm{Reg}\,(u):=\mathcal{Z}_1(u)=\{x\in\Omega\colon u(x)=0 ~\text{and} ~\nabla u(x)\neq 0\}
\]
and the \emph{singular part} as
\[
    \mathrm{Sing}\,(u)=\mathcal{Z}(u)\setminus\mathrm{Reg}\,(u).
\]
Our second main result establishes that, in the case of a shrinking hole, the rate of convergence of the perturbed eigenvalue to the unperturbed one depends on whether the hole is made on the singular part or not. In order to state  the result, we need  a notion of limit boundary torsional rigidity, to introduce which we recall the definition of Beppo Levi spaces.
\begin{definition}\label{def:beppo-levi}
  Let $N\geq 3$ and $E\sub \R^N$ be an open Lipschitz set. The space
  $\mathcal{D}^{1,2}(\R^N\setminus E)$ is defined as the completion of
  $C_c^\infty(\R^N\setminus E)$ with respect to the norm
\[
\norm{u}_{\mathcal{D}^{1,2}(\R^N\setminus E)}
:=\bigg(\int_{\R^N\setminus E} \abs{\nabla u}^2\dx \bigg)^{\frac{1}{2}}.
\]
\end{definition}
By classical Sobolev's inequality,  $\mathcal{D}^{1,2}(\R^N\setminus E)=\left\{u\in L^{\frac{2N}{N-2}}(\R^N\setminus E):\nabla u\in L^2(\R^N\setminus E)\right\}$.
\begin{definition}\label{def:torsion_Rn}
Let $N\geq 3$, $E\sub\R^N$ be a bounded open Lipschitz set, and $f\in L^2(\partial E)$. Let
\begin{equation*}
  \tilde{J}_{E,f}:\mathcal{D}^{1,2}(\R^N\setminus E)\to\R,\quad 
  \tilde{J}_{E,f}(u):=\frac{1}{2}\int_{\R^N\setminus E}\abs{\nabla
    u}^2\dx
  -\int_{\partial E}uf\ds.
\end{equation*}
We
define the \emph{$f$-torsional rigidity of $\partial E$ relative to
  $\R^N\setminus E$} as 
  \begin{equation*}
\tau_{\R^N\setminus E}(\partial E,f):
=-2\inf\left\{\tilde{J}_{E,f}(u)\colon u\in\mathcal{D}^{1,2}(\R^N\setminus E) \right\}.
\end{equation*}
\end{definition}
By standard minimization arguments, there exists a unique $\tilde U_{E,f}\in \mathcal{D}^{1,2}(\R^N\setminus E)$ achieving the infimum defining $\tau_{\R^N\setminus E}(\partial E,f)$, i.e.
\begin{equation}\label{eq:achieve-tau}
    \tau_{\R^N\setminus E}(\partial E,f)=-2\tilde{J}_{E,f}(\tilde U_{E,f}),
\end{equation}
see Proposition \ref{prop:torsion_functions}.

We are now ready to state our second main result, which is  based on a blow-up analysis for the quantities appearing in the asymptotic expansion in \Cref{thm:main1}. This provides the explicit  rate  of convergence of the perturbed eigenvalues, in terms of the behavior of the limit eigenfunction near the point where the hole is excised.
\begin{theorem}\label{thm:blowup_intr}
    Let $N\geq  3$, $x_0\in\Omega$, $\Sigma\sub\R^N$ be an open, bounded, Lipschitz set, $\e_0>0$ be as in  \eqref{eq:inOmega} and, for every $\e\in(0,\e_0)$, $\Sigma_\e:=x_0+\e\Sigma$.
   Let $n\geq1$ be such that $\lambda_n(\Omega)$ is simple. 
\begin{itemize}
    \item[\rm (i)] If $x_0\in\Omega\setminus \mathrm{Sing}\,(\varphi_n)$, then, as $\e\to 0$,
    \begin{align*}
        &\lambda_n(\Omega_\e)=\lambda_n(\Omega)\\ 
        &\quad-\e^N \left( \tau_{\R^N\setminus \Sigma}(\partial \Sigma,\nabla\varphi_n(x_0)\cdot\nnu)
 +|\Sigma| (|\nabla \varphi_n(x_0)|^2  - (\lambda_n(\Omega)- 1) \varphi_n^2(x_0))\right)+ o(\e^N).
    \end{align*}     
    \item[\rm (ii)]
    If $x_0\in\mathrm{Sing}\,(\varphi_n)$, then, as $\e\to 0$,
        \begin{equation*}
        \lambda_n(\Omega_\e)=\lambda_n(\Omega)  -\e^{N + 2k - 2} \left( \tau_{\R^N\setminus \Sigma}(\partial \Sigma,\partial_\nnu P^{\varphi_n}_{x_0,k}) + \int_\Sigma |\nabla P^{\varphi_n}_{x_0,k}|^2 \dx \right)+o(\e^{N+2k-2}),
    \end{equation*}
where $k\geq 2$ is the vanishing order of $\varphi_n$ at $x_0$ and  $P_{x_0,k}^{\varphi_n}$ is as in \eqref{eq:polinomi}.
    \end{itemize}
\end{theorem}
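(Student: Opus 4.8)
\textbf{Proof strategy for Theorem \ref{thm:blowup_intr}.}
The plan is to combine the general expansion \eqref{eq:asym_eigenvalues} of \Cref{thm:main1} with a blow-up analysis of the two ingredients appearing there, namely $\mathcal{T}_{\overline{\Omega}_\e}(\partial\Sigma_\e,\partial_\nnu\varphi_n)$ and $\int_{\Sigma_\e}(\abs{\nabla\varphi_n}^2-(\lambda_n(\Omega)-1)\varphi_n^2)\dx$. First I would observe that the family $\Sigma_\e=x_0+\e\Sigma$ satisfies \textbf{(H)} and \eqref{eq:cap-to-0} by Lemma \ref{lemma:ext}, so \Cref{thm:main1} applies. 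The second (lower-order) term is the easier one: performing the change of variables $x=x_0+\e y$ and using the Taylor expansion of the analytic eigenfunction $\varphi_n$ around $x_0$ — writing $\varphi_n(x_0+\e y)=\e^k P^{\varphi_n}_{x_0,k}(y)+O(\e^{k+1})$ when $\varphi_n$ vanishes of order $k$ at $x_0$ (with $k=0$ in case (i)) — one finds $\int_{\Sigma_\e}\abs{\nabla\varphi_n}^2\dx=\e^{N+2k-2}\int_\Sigma\abs{\nabla P^{\varphi_n}_{x_0,k}}^2\dy+o(\e^{N+2k-2})$ and $\int_{\Sigma_\e}\varphi_n^2\dx=\e^{N+2k}\int_\Sigma (P^{\varphi_n}_{x_0,k})^2\dy+o(\e^{N+2k})$; in case (i), $k=0$ gives $\e^N\abs{\Sigma}(\abs{\nabla\varphi_n(x_0)}^2-(\lambda_n(\Omega)-1)\varphi_n^2(x_0))+o(\e^N)$, while in case (ii), since $k\geq 2$, the $\varphi_n^2$ contribution is of strictly higher order $\e^{N+2k}$ and thus absorbed into the remainder, leaving only $\e^{N+2k-2}\int_\Sigma\abs{\nabla P^{\varphi_n}_{x_0,k}}^2\dy$.

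The heart of the matter is the blow-up for the torsional rigidity: I claim that $\mathcal{T}_{\overline{\Omega}_\e}(\partial\Sigma_\e,\partial_\nnu\varphi_n)=\e^{N+2k-2}\,\tau_{\R^N\setminus\Sigma}(\partial\Sigma,\partial_\nnu P^{\varphi_n}_{x_0,k})+o(\e^{N+2k-2})$ as $\e\to 0$. To see the correct scaling, note that the boundary datum is $f_\e:=\partial_\nnu\varphi_n$ on $\partial\Sigma_\e$, and on $\partial\Sigma_\e$ one has $\nabla\varphi_n(x_0+\e y)\cdot\nnu=\e^{k-1}\nabla P^{\varphi_n}_{x_0,k}(y)\cdot\nnu+O(\e^k)$ for $y\in\partial\Sigma$ (where in case (i), $k=1$ formally but one should simply use $\nabla\varphi_n(x_0)\cdot\nnu$ as the leading datum, since $\varphi_n(x_0)\neq 0$ may hold; more precisely, in case (i) the relevant homogeneous polynomial is $P^{\varphi_n}_{x_0,1}(y)=\nabla\varphi_n(x_0)\cdot y$). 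Rescaling the minimizer $U_\e$ of $J_{\Omega,\Sigma_\e,f_\e}$ by setting $V_\e(y):=\e^{-(k+\frac N2 -1)}U_\e(x_0+\e y)$ — the exponent chosen so that $\norm{\nabla V_\e}_{L^2}^2$ scales like $\e^{-(N+2k-2)}\norm{\nabla U_\e}_{L^2}^2$ matches $\e^{-(N+2k-2)}\int_{\partial\Sigma_\e}U_\e f_\e\ds$ against $\int_{\partial\Sigma}V_\e\,(\partial_\nnu P^{\varphi_n}_{x_0,k})\ds$ — one expects $V_\e$ to converge, in an appropriate sense on bounded sets and in $\mathcal{D}^{1,2}(\R^N\setminus\Sigma)$, to $\tilde U_{\Sigma,\partial_\nnu P^{\varphi_n}_{x_0,k}}$. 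The key technical points are: (a) a uniform energy bound $\norm{\nabla V_\e}_{L^2(\Omega_\e^{\text{resc}})}\leq C$, obtained by testing $J$ with a cutoff of a fixed competitor and estimating the boundary term via the trace inequality; (b) the lower-order terms in the functional — namely the zeroth-order term $\frac12\int U_\e^2$ and the error in expanding $f_\e$ — vanish after rescaling, because the domain $\frac{1}{\e}(\Omega-x_0)$ invades $\R^N$ and the Poincaré/Sobolev-type control on $\Omega_\e$ degenerates at exactly the right rate (here condition \eqref{eq:inOmega} and the Beppo Levi / Sobolev embedding $\mathcal{D}^{1,2}(\R^N\setminus\Sigma)\hookrightarrow L^{2N/(N-2)}$ are essential, and this is where $N\geq 3$ is used); (c) a $\Gamma$-convergence or direct two-sided comparison of the rescaled infima, using on one hand a recovery sequence built from $\tilde U_{\Sigma,\partial_\nnu P^{\varphi_n}_{x_0,k}}$ truncated and rescaled back, and on the other hand the weak lower semicontinuity of the Dirichlet energy together with the strong convergence of the boundary traces.

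Assembling the pieces, in case (i) both $\mathcal{T}_{\overline{\Omega}_\e}$ and $\int_{\Sigma_\e}(\dots)$ are of order $\e^N$, their coefficients add up as stated, and the $o(\cdot)$ remainders in \eqref{eq:asym_eigenvalues} are $o(\e^N)$; in case (ii) both are of order $\e^{N+2k-2}$ with $k\geq 2$, the $\varphi_n^2(x_0)$-type term disappears (it is $o(\e^{N+2k-2})$), and one obtains the stated expansion with coefficient $\tau_{\R^N\setminus\Sigma}(\partial\Sigma,\partial_\nnu P^{\varphi_n}_{x_0,k})+\int_\Sigma\abs{\nabla P^{\varphi_n}_{x_0,k}}^2\dx$. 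I expect the main obstacle to be step (b)–(c) above: rigorously justifying that passing from the bounded perforated domain $\Omega_\e$ to the exterior domain $\R^N\setminus\Sigma$ in the limit does not lose mass through the outer boundary and that the zeroth-order term is genuinely negligible after rescaling — this requires a careful cutoff argument near $\partial B_{r_0}(x_0)$, uniform extension estimates from \textbf{(H2)}, and the sharp scaling of the Sobolev constant, and it is precisely the point where the dimensional restriction $N\geq 3$ enters (the case $N=2$ being genuinely different, as reflected by the separate \Cref{thm:spher-Dim2}).
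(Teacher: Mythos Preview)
Your strategy is correct and matches the paper's: apply \Cref{thm:main1}, expand the integral term via Taylor, and identify the blow-up limit of $\mathcal{T}_\e$ by rescaling the minimizer $U_\e$. Two minor corrections are in order. First, the right rescaling is $V_\e(y)=\e^{-k}U_\e(x_0+\e y)$, not $\e^{-(k+N/2-1)}$; redo the change of variables and you will see that with $c=\e^{-k}$ one gets exactly $\|\nabla V_\e\|_{L^2}^2=\e^{-(N+2k-2)}\|\nabla U_\e\|_{L^2}^2$. Second, in case (i) you must also treat the sub-case $\nabla\varphi_n(x_0)=0$ (a non-nodal critical point of $\varphi_n$): there the vanishing order of $\varphi_n-\varphi_n(x_0)$ is some $k\geq 2$, so your blow-up gives $\mathcal{T}_\e=O(\e^{N+2k-2})=o(\e^N)$, while on the other hand $\tau_{\R^N\setminus\Sigma}(\partial\Sigma,\nabla\varphi_n(x_0)\cdot\nnu)=\tau_{\R^N\setminus\Sigma}(\partial\Sigma,0)=0$, and the stated formula holds with both sides of the torsional contribution vanishing. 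On the technical side, the paper executes your steps (b)--(c) not through the Sobolev embedding but through a Hardy-type inequality on $B_R\setminus\Sigma$ with constant independent of $R$, which yields weighted $L^2$ control $\int |x|^{-2}V_\e^2\dx$ uniformly and makes the diagonal compactness and identification of the limit in $\mathcal{D}^{1,2}(\R^N\setminus\Sigma)$ particularly clean; your Sobolev-based route would also work.
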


Thanks to the estimates for the norm convergence of perturbed eigenfunctions, see \eqref{eq:asym_eigenfunctions}, we are able to obtain the explicit rate of convergence in the case of a shrinking hole.

\begin{theorem}\label{thm:blow_up_eigenfunct}
    Let $N\geq 3$, $x_0\in\Omega$, $\Sigma\sub\R^N$ be an open, bounded, Lipschitz set, $\e_0>0$ be as in  \eqref{eq:inOmega}, and, for every $\e\in(0,\e_0)$,  $\Sigma_\e:=x_0+\e\Sigma$. Let $n\geq1$ be such that
     $\lambda_n(\Omega)$ is simple and $k\geq 1$ be the vanishing order of $\varphi_n-\varphi_n(x_0)$ at $x_0$. 
    Let 
    \begin{equation*}
        \Phi_\e(x):=\frac{\varphi_n^\e(\e x+x_0)-\varphi_n(x_0)}{\e^k},
    \end{equation*}
where 
    $\varphi_n^\e$ is chosen as in \eqref{eq:choose-phi-eps-1}. Then, for all $R>0$ such that $\overline{\Sigma}\sub B_R$,
    \begin{equation}\label{eq:bl_eigenfunct_th1}
        \Phi_\e\to P_{x_0,k}^{\varphi_n}-
    \tilde{U}_{\Sigma,\partial_\nnu P_{x_0,k}^{\varphi_n}}
        \quad\text{in }H^1(B_R\setminus\overline{\Sigma})~\text{as }\e\to 0,
    \end{equation}
where
    $\tilde{U}_{\Sigma,\partial_\nnu P_{x_0,k}^{\varphi_n}}\in\mathcal{D}^{1,2}(\R^N\setminus\Sigma)$ is the function achieving $\tau_{\R^N\setminus \Sigma}(\partial\Sigma,\partial_\nnu P_{x_0,k}^{\varphi_n})$ as in \eqref{eq:achieve-tau}.  
    Moreover
   \begin{equation}\label{eq:rate-conv-autofunz}
        \lim_{\e\to 0}\e^{-(N+2k-2)}\|\varphi_n^\e-\varphi_n\|_{H^1(\Omega_\e)}^2=
        \tau_{\R^N\setminus \Sigma}(\partial\Sigma,\partial_\nnu P_{x_0,k}^{\varphi_n}).
    \end{equation}
    \end{theorem}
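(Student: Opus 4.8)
The plan is to combine the fine approximation $\varphi_n^\e\approx f_\e:=\varphi_n-U_\e$ furnished by \eqref{eq:asym_eigenfunctions} of \Cref{thm:main1} (where $U_\e:=U_{\Omega,\Sigma_\e,\partial_\nnu\varphi_n}$) with a blow-up of $U_\e$ at $x_0$. Write $\mathcal{T}_\e:=\mathcal{T}_{\overline{\Omega}_\e}(\partial\Sigma_\e,\partial_\nnu\varphi_n)$ and, on $\e^{-1}(\Omega-x_0)\setminus\overline{\Sigma}$, set
\[
  V_\e(x):=\e^{-k}U_\e(\e x+x_0),\qquad W_\e(x):=\e^{-k}\bigl(\varphi_n(\e x+x_0)-\varphi_n(x_0)\bigr)-V_\e(x),
\]
so that $W_\e(x)=\e^{-k}\bigl(f_\e(\e x+x_0)-\varphi_n(x_0)\bigr)$ and $\Phi_\e-W_\e=\e^{-k}(\varphi_n^\e-f_\e)(\e\,\cdot+x_0)$. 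I would prove: (a) $V_\e\to\tilde U_{\Sigma,\partial_\nnu P_{x_0,k}^{\varphi_n}}$ in $H^1(B_R\setminus\overline{\Sigma})$ for every $R$ with $\overline{\Sigma}\sub B_R$, together with $\e^{-(N+2k-2)}\mathcal{T}_\e\to\tau_{\R^N\setminus\Sigma}(\partial\Sigma,\partial_\nnu P_{x_0,k}^{\varphi_n})$; and (b) $\Phi_\e-W_\e\to0$ in $H^1(B_R\setminus\overline{\Sigma})$. Then \eqref{eq:bl_eigenfunct_th1} follows from (a), (b) and a Taylor expansion of $\varphi_n$, while \eqref{eq:rate-conv-autofunz} follows from \eqref{eq:asym_eigenfunctions-2} and the rate in (a).

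The blow-up in (a) is the analysis underlying \Cref{thm:blowup_intr} (and can be quoted from there if already in place); here is how I would carry it out. Rescaling the Euler--Lagrange equation \eqref{eq:U-OEf} for $U_\e$ and dividing by $\e^{N+k-2}$ gives, for all $v\in H^1\bigl(\e^{-1}(\Omega-x_0)\setminus\overline{\Sigma}\bigr)$,
\[
  \int_{\e^{-1}(\Omega-x_0)\setminus\overline{\Sigma}}\!\bigl(\nabla V_\e\cdot\nabla v+\e^2 V_\e v\bigr)\dx=\e^{1-k}\int_{\partial\Sigma}v\,\bigl(\nabla\varphi_n(\e x+x_0)\cdot\nnu\bigr)\ds.
\]
Since $\varphi_n$ is analytic and the Taylor polynomials of $\varphi_n-\varphi_n(x_0)$ at $x_0$ of orders $1,\dots,k-1$ vanish, $\e^{1-k}\nabla\varphi_n(\e\,\cdot+x_0)\to\nabla P_{x_0,k}^{\varphi_n}$ uniformly on $\overline{\Sigma}$. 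Testing with $v=V_\e$, the trace inequality on a fixed $B_\rho\setminus\overline{\Sigma}$ (with $\overline{\Sigma}\sub B_\rho$), the Sobolev inequality on $\R^N\setminus\Sigma$, and the $\e$-uniform extension obtained by rescaling \eqref{eq:ass_sigma_3} (see Lemma~\ref{lemma:ext}) give a uniform bound on $\norm{\nabla V_\e}_{L^2}$. Hence, up to a subsequence, $V_\e\weak V_0$ in $\mathcal{D}^{1,2}(\R^N\setminus\Sigma)$ and in $H^1(B_R\setminus\overline{\Sigma})$; passing to the limit in the rescaled weak formulation (the term $\e^2\int V_\e v$ vanishing by the Sobolev bound) yields $V_0=\tilde U_{\Sigma,\partial_\nnu P_{x_0,k}^{\varphi_n}}$ by the uniqueness in \eqref{eq:achieve-tau}, so the full family converges. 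Testing the rescaled equation with $V_\e$ and the limit one with $V_0$, and using the compact convergence of traces on $\partial\Sigma$, gives convergence of the Dirichlet energies, which upgrades the convergence to strong in $H^1(B_R\setminus\overline{\Sigma})$ and forces $\e^2\norm{V_\e}_{L^2}^2\to0$. Since $\mathcal{T}_\e=\norm{U_\e}_{H^1(\Omega_\e)}^2=\e^{N+2k-2}\bigl(\norm{\nabla V_\e}_{L^2}^2+\e^2\norm{V_\e}_{L^2}^2\bigr)$ by Remark~\ref{rmk:torsion_norm} and $\tau_{\R^N\setminus\Sigma}(\partial\Sigma,\partial_\nnu P_{x_0,k}^{\varphi_n})=\norm{\nabla\tilde U_{\Sigma,\partial_\nnu P_{x_0,k}^{\varphi_n}}}_{L^2}^2$ by \eqref{eq:achieve-tau}, this also gives the rate for $\mathcal{T}_\e$.

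By the same Taylor expansion, $\e^{-k}\bigl(\varphi_n(\e\,\cdot+x_0)-\varphi_n(x_0)\bigr)\to P_{x_0,k}^{\varphi_n}$ in $C^1(\overline{B_R})$, so $W_\e\to P_{x_0,k}^{\varphi_n}-\tilde U_{\Sigma,\partial_\nnu P_{x_0,k}^{\varphi_n}}$ in $H^1(B_R\setminus\overline{\Sigma})$. For $\Phi_\e-W_\e=\e^{-k}(\varphi_n^\e-f_\e)(\e\,\cdot+x_0)$, a change of variables shows its Dirichlet integral over $B_R\setminus\overline{\Sigma}$ equals $\e^{2-2k-N}\int_{\e(B_R\setminus\overline{\Sigma})+x_0}\abs{\nabla(\varphi_n^\e-f_\e)}^2\dx\le\e^{2-2k-N}\norm{\varphi_n^\e-f_\e}_{H^1(\Omega_\e)}^2$, whereas its $L^2$ integral is taken only over the shrinking set $B_{\e R}(x_0)\setminus\overline{\Sigma_\e}$; thus Hölder's inequality and the $\e$-uniform Sobolev embedding $H^1(\Omega_\e)\hookrightarrow L^{\frac{2N}{N-2}}(\Omega_\e)$ (again from \eqref{eq:ass_sigma_3}) supply an extra factor $\e^2$ and the same bound $C\e^{2-2k-N}\norm{\varphi_n^\e-f_\e}_{H^1(\Omega_\e)}^2$. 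By \eqref{eq:asym_eigenfunctions} this equals $\e^{2-2k-N}\bigl(o(\mathcal{T}_\e)+O(\norm{\varphi_n}_{L^2(\Sigma_\e)}^4)\bigr)$, which tends to $0$ because $\mathcal{T}_\e=O(\e^{N+2k-2})$ by (a) and $\norm{\varphi_n}_{L^2(\Sigma_\e)}^2=O(\e^N)$ --- indeed $O(\e^{N+2k})$ when $\varphi_n(x_0)=0$ --- by Taylor. Hence $\Phi_\e-W_\e\to0$ in $H^1(B_R\setminus\overline{\Sigma})$, proving \eqref{eq:bl_eigenfunct_th1}. Finally, dividing \eqref{eq:asym_eigenfunctions-2} by $\e^{N+2k-2}$ and using the rate for $\mathcal{T}_\e$ together with the same size estimates to absorb the remainders $o(\mathcal{T}_\e)$, $O(\norm{\varphi_n}_{L^2(\Sigma_\e)}^4)$ and $O(\norm{\varphi_n}_{L^2(\Sigma_\e)}^2\sqrt{\mathcal{T}_\e})$ yields \eqref{eq:rate-conv-autofunz}.

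The main obstacle is the blow-up in (a) --- equivalently, the sharp rate for $\mathcal{T}_\e$: obtaining $\e$-uniform energy and Sobolev bounds on the expanding perforated domains $\e^{-1}(\Omega-x_0)\setminus\overline{\Sigma}$ out of the abstract hypothesis \eqref{eq:ass_sigma_3}, pinning down the scaling exponent $\e^k$ through the vanishing order of $\varphi_n-\varphi_n(x_0)$, passing to the limit in a Neumann problem on an unbounded perforated domain while handling the lower-order term $\e^2 V_\e v$, and upgrading weak to strong convergence via the energy identity --- precisely the point where $\tau_{\R^N\setminus\Sigma}$ appears. Once this is in hand, the comparison $\norm{\Phi_\e-W_\e}_{H^1(B_R\setminus\overline{\Sigma})}\to0$ is a short consequence of \eqref{eq:asym_eigenfunctions}, provided the $L^2$-norm is localized to $B_{\e R}(x_0)$ in order to harvest the extra factor $\e^2$.
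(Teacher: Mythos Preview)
Your approach matches the paper's: decompose $\Phi_\e$ into the rescaled $\varphi_n$, the rescaled $U_\e$, and a remainder controlled by \eqref{eq:asym_eigenfunctions}; quote the blow-up for $U_\e$ (this is exactly \Cref{th:blowup} in the paper, which gives both your (a) and the rate for $\mathcal T_\e$); use the Taylor expansion of $\varphi_n$ (\Cref{prop:asympt_phi}); and deduce \eqref{eq:rate-conv-autofunz} from \eqref{eq:asym_eigenfunctions-2}. The only technical variation is in controlling the $L^2$-part of $\Phi_\e-W_\e$ on $B_R\setminus\overline{\Sigma}$: you localize to $B_{\e R}(x_0)$ and use the critical Sobolev embedding to gain a factor $\e^2$, whereas the paper applies the Hardy-type inequality of \Cref{lemma:hardy} to bound $\int_{B_R\setminus\Sigma}u_\e^2/|x|^2$ by the full rescaled $H^1$-energy; both routes yield the same conclusion. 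One detail you should make explicit: when $\varphi_n(x_0)\neq 0$, your estimate $\|\varphi_n\|_{L^2(\Sigma_\e)}^4=O(\e^{2N})$ gives $\e^{2-2k-N}\cdot O(\e^{2N})=O(\e^{N+2-2k})$, which tends to $0$ only because necessarily $k\in\{1,2\}$ in that case (cf.\ \Cref{rem:k12}); the paper spells this out.
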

We observe that, in Theorem \ref{thm:blowup_intr}--(ii), $k$ is actually equal to the vanishing order of $\varphi_n-\varphi_n(x_0)$, since $\varphi_n(x_0)=0$ when $x_0\in \mathrm{Sing}\,(\varphi_n)$, consistently with the notation used in Theorem \ref{thm:blow_up_eigenfunct}. We refer to Remark \ref{rem:k12} for further discussion on vanishing orders of eigenfunctions.

From \Cref{thm:blowup_intr}, one can see that the sign of the leading term in the asymptotic expansion of $\lambda_n(\Omega_\e)-\lambda_n(\Omega)$ might change depending on the position of the hole. Indeed, 
the function $f\mapsto \tau_{\R^N\setminus \Sigma}(\partial \Sigma,f)$ is continuous from $L^2(\partial\Sigma)$ into $\R$; hence $\tau_{\R^N\setminus \Sigma}(\partial \Sigma,\nabla\varphi_n(x_0)\cdot\nnu)$ is small if $|\nabla\varphi_n(x_0)|$ is small. It follows that,
if $x_0$ is close to critical points of
$\varphi_n$ which are not zeroes, then the coefficient of the leading term in the expansion 
is strictly positive (since, for $n\geq1$, we have $\lambda_n(\Omega)>1$), while close to the nodal set $\mathcal{Z}(\varphi_n)$ the coefficient is negative. A more detailed discussion is contained in the following remark.

\begin{remark}\label{rem:Gamma}
In the case of holes of type \eqref{eq:hole-shrinking} shrinking to a point $x_0$, the vanishing order of $\lambda_n(\Omega_\e)-\lambda_n(\Omega)$  is strongly influenced by the position of the point $x_0\in\Omega$. If $x_0$ lies on the singular part of the nodal set of $\varphi_n$, which is known to be at most $(N-2)$-dimensional (see \cite{Caffarelli-Friedman1985}), the eigenvalue variation vanishes with the same order as $\e^{N+2k-2}$, being $k\geq 2$ the vanishing order of $\varphi_n$ at $x_0$, and the coefficient of the term $\e^{N+2k-2}$ in the expansion of $\lambda_n(\Omega_\e)-\lambda_n(\Omega)$ is strictly negative; this implies that the expansion is sharp and 
\[
    \lambda_n(\Omega_\e)<\lambda_n(\Omega),\quad\text{for $\e$ sufficiently small}.
\]
On the other hand, if $x_0$ is outside the singular set of $\varphi_n$ and outside the set
\begin{align}\label{eq:def_Gamma}
    \Gamma=\Gamma_{\Sigma,n}:=\Big\{x\in \Omega\colon &\tau_{\R^N\setminus \Sigma}(\partial \Sigma,\nabla\varphi_n(x)\cdot\nnu)
 \\\notag
 &+|\Sigma| (|\nabla \varphi_n(x)|^2  - (\lambda_n(\Omega)- 1) \varphi_n^2(x))=0\Big\}
 \setminus \mathrm{Sing}\,(\varphi_n),
\end{align}
the rate of convergence is $\e^N$. If $x_0\in\Gamma$,   \Cref{thm:blowup_intr} just lets us know that
\[
    \lambda_n(\Omega_\e)-\lambda_n(\Omega)=o(\e^N)\quad\text{as }\e\to 0,
\]
without further information about the next non-zero term in the expansion or about the sign. 
The complement of the set $\Gamma$ in
$\Omega$ is the disjoint union of the two regions
\[
    \Omega^+:=\left\{x\in \Omega\colon \tau_{\R^N\setminus \Sigma}(\partial \Sigma,\nabla\varphi_n(x)\cdot\nnu)
 +|\Sigma| (|\nabla \varphi_n(x)|^2  - (\lambda_n(\Omega)- 1) \varphi_n^2(x))<0\right\}
\]
and
\[
    \Omega^-:=\left\{x\in \Omega\colon \tau_{\R^N\setminus \Sigma}(\partial \Sigma,\nabla\varphi_n(x)\cdot\nnu)
 +|\Sigma| (|\nabla \varphi_n(x)|^2  - (\lambda_n(\Omega)- 1) \varphi_n^2(x))>0\right\}\cup 
 \mathrm{Sing}\,(\varphi_n),
\]
in each of which the mutual position of the perturbed eigenvalue and the limit one is different. Indeed, recalling that $\lambda_n(\Omega)>1$, if $x_0\in\Omega^+$, then
\[
    \lambda_n(\Omega_\e)>\lambda_n(\Omega),\quad\text{for $\e$ sufficiently small},
\]
while, if $x_0\in\Omega^-$, then
\[
    \lambda_n(\Omega_\e)<\lambda_n(\Omega),\quad\text{for $\e$ sufficiently small}.
\]
In particular, $\mathcal{Z}(\varphi_n)\sub\Omega^-$, while $\mathop{\rm Crit}(\varphi_n)\sub\Omega^+$, where
\[
\mathop{\rm Crit}(\varphi_n):=\{x\in\Omega\colon \varphi_n(x)\neq 0~\text{and }\nabla\varphi_n(x)=0\}
\]
denotes the set of critical points outside $\mathcal{Z}(\varphi_n)$. 
\end{remark}

The asymptotic expansion obtained in \eqref{eq:asym_eigenvalues} 
 can be made completely explicit in the case of a  spherical hole. In dimension $N\geq 3$ this can be done by  calculating the limit quantity $\tau_{\R^N\setminus \Sigma}(\partial \Sigma,\partial_\nnu P^{\varphi_n}_{x_0,k})$ that appears in  Theorem \ref{thm:blowup_intr}.

\begin{theorem}\label{thm:spher}
    Let $N\geq 3$,  $x_0\in\Omega$, and $\Sigma_\e:=x_0+\e B_1$. Let $n\geq1$ be such that $\lambda_n(\Omega)$ is simple. 
    \begin{itemize}
    \item[\rm (i)]    
    If  $x_0\in\Omega\setminus\mathrm{Sing}\,(\varphi_n)$, then 
\begin{equation*}
        \lambda_n(\Oe)=\lambda_n(\Omega)-\omega_N\e^N\left( \frac{N}{N-1}\abs{\nabla\varphi_n(x_0)}^2-(\lambda_n(\Omega)-1)\varphi_n^2(x_0)\right)+o(\e^N)\quad\text{as }\e\to 0,
    \end{equation*}
    where $\omega_N=|B_1|$.
    \item[\rm (ii)]    
   If  $x_0\in\mathrm{Sing}\,(\varphi_n)$, then 
    \begin{equation*}
        \lambda_n(\Omega_\e)=\lambda_n(\Omega)  - \frac{k(N+2k-2)}{N+k-2}\e^{N + 2k - 2}\int_{\partial B_1}Y^2\ds+o(\e^{N+2k-2})\quad\text{as }\e\to 0,
    \end{equation*}
where $k\geq 2$ is the vanishing order of $\varphi_n$ at $x_0$ and 
$Y$   
is the spherical harmonic of degree $k$ given by $Y=P_{x_0,k}^{\varphi_n}\big|_{\partial B_1}$, $P_{x_0,k}^{\varphi_n}$ being as in \eqref{eq:polinomi}.
    \end{itemize}
\end{theorem}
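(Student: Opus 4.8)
The plan is to specialize \Cref{thm:blowup_intr} to the spherical hole $\Sigma=B_1$. This reduces the whole statement to one explicit computation, namely the evaluation of the limit torsional rigidity $\tau_{\R^N\setminus B_1}(\partial B_1,f)$ when the boundary datum $f$ is a constant multiple of a spherical harmonic, together with a couple of elementary integral identities on the sphere. In case (i) the relevant datum is $\nabla\varphi_n(x_0)\cdot\nnu$, which on $\partial B_1$ (where $\nnu(x)=x$) is the restriction of the linear form $x\mapsto\nabla\varphi_n(x_0)\cdot x$, i.e. a degree-$1$ spherical harmonic; in case (ii) it is $\partial_\nnu P_{x_0,k}^{\varphi_n}$, which as I explain below is $k$ times a degree-$k$ spherical harmonic.

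The heart of the matter is the following lemma. Let $Y$ be a spherical harmonic of degree $\ell\geq1$ on $\partial B_1$ and let $\widetilde Y(x):=|x|^\ell Y(x/|x|)$ be the associated homogeneous harmonic polynomial of degree $\ell$ on $\R^N$. Then, for every $c\in\R$,
\[
\widetilde U_{B_1,cY}(x)=\frac{c}{N+\ell-2}\,\frac{\widetilde Y(x)}{|x|^{N+2\ell-2}}=\frac{c}{N+\ell-2}\,|x|^{-(N+\ell-2)}Y\!\left(\frac{x}{|x|}\right),\qquad |x|\geq1,
\]
and consequently
\[
\tau_{\R^N\setminus B_1}(\partial B_1,cY)=\frac{c^2}{N+\ell-2}\int_{\partial B_1}Y^2\ds.
\]
To prove this I would observe that the function on the right-hand side of the first display — the ``exterior'' harmonic of degree $\ell$ — is smooth on $\R^N\setminus B_1$, decays like $|x|^{-(N+\ell-2)}$ with gradient $O(|x|^{-(N+\ell-1)})$, hence has finite Dirichlet energy and belongs to $\mathcal{D}^{1,2}(\R^N\setminus B_1)$ (here $N\geq3$ and $\ell\geq1$ are used). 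Being harmonic, an integration by parts shows that it solves the Euler--Lagrange equation $\int_{\R^N\setminus B_1}\nabla u\cdot\nabla v\dx=\int_{\partial B_1}v\,(cY)\ds$ for all $v\in\mathcal{D}^{1,2}(\R^N\setminus B_1)$, since its outward radial derivative on $\partial B_1$ equals $-cY$. By the uniqueness of the minimizer (\Cref{prop:torsion_functions}) this identifies $\widetilde U_{B_1,cY}$; testing the Euler--Lagrange equation with $\widetilde U_{B_1,cY}$ itself gives $\tau_{\R^N\setminus B_1}(\partial B_1,cY)=\int_{\partial B_1}\widetilde U_{B_1,cY}\,(cY)\ds$, which upon restricting to $\partial B_1$ yields the second display.

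With this in hand, part (i) follows by taking $\ell=1$, $c=1$ and $Y(x)=\nabla\varphi_n(x_0)\cdot x$, so that $\tau_{\R^N\setminus B_1}(\partial B_1,\nabla\varphi_n(x_0)\cdot\nnu)=\frac{1}{N-1}\int_{\partial B_1}(\nabla\varphi_n(x_0)\cdot x)^2\ds=\frac{\omega_N}{N-1}|\nabla\varphi_n(x_0)|^2$, where the last equality uses $\int_{\partial B_1}x_ix_j\ds=\omega_N\delta_{ij}$ (by symmetry, together with $\sum_i\int_{\partial B_1}x_i^2\ds=|\partial B_1|=N\omega_N$). Since $|\Sigma|=|B_1|=\omega_N$, substituting into \Cref{thm:blowup_intr}(i) and collecting the $|\nabla\varphi_n(x_0)|^2$ terms (using $\frac1{N-1}+1=\frac{N}{N-1}$) gives the claimed expansion.

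For part (ii), write $P:=P_{x_0,k}^{\varphi_n}$. Comparing the lowest-order homogeneous components in the identity $-\Delta\varphi_n=(\lambda_n(\Omega)-1)\varphi_n$ near $x_0$ (the right-hand side starts at homogeneity $k$, the term $-\Delta P$ has homogeneity $k-2$) forces $\Delta P=0$, so $P$ is a homogeneous harmonic polynomial of degree $k$; setting $Y:=P|_{\partial B_1}$, a degree-$k$ spherical harmonic, Euler's identity gives $\partial_\nnu P=x\cdot\nabla P=kP=kY$ on $\partial B_1$. Applying the lemma with $\ell=k$, $c=k$ gives $\tau_{\R^N\setminus B_1}(\partial B_1,\partial_\nnu P)=\frac{k^2}{N+k-2}\int_{\partial B_1}Y^2\ds$, while integrating by parts over $B_1$ (using $\Delta P=0$, $P|_{\partial B_1}=Y$, $\partial_\nnu P|_{\partial B_1}=kY$) gives $\int_{B_1}|\nabla P|^2\dx=k\int_{\partial B_1}Y^2\ds$. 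Summing these and simplifying, $\frac{k^2}{N+k-2}+k=\frac{k(N+2k-2)}{N+k-2}$, which combined with \Cref{thm:blowup_intr}(ii) produces the stated formula. The only step demanding genuine care is the core lemma — writing down the exterior torsion function explicitly and verifying it lies in $\mathcal{D}^{1,2}(\R^N\setminus B_1)$ and solves the right variational problem, so that the uniqueness in \Cref{prop:torsion_functions} applies; this rests on classical separation of variables for harmonic functions outside a ball, and everything else is bookkeeping.
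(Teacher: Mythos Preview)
Your proposal is correct and follows essentially the same route as the paper: both reduce \Cref{thm:spher} to \Cref{thm:blowup_intr} by explicitly computing $\tau_{\R^N\setminus B_1}(\partial B_1,\cdot)$ for a spherical-harmonic datum via the exterior harmonic $r^{-(N+\ell-2)}Y$, then use the identities $\int_{\partial B_1}(a\cdot x)^2\ds=\omega_N|a|^2$ and $\int_{B_1}|\nabla P|^2\dx=k\int_{\partial B_1}Y^2\ds$. The paper packages the key computation as a lemma for $\partial_\nnu P$ with $P$ homogeneous harmonic (your $c=\pm k$), and cites \Cref{prop:asympt_phi} rather than your Taylor-comparison argument for the harmonicity of $P_{x_0,k}^{\varphi_n}$, but the substance is identical.
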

In the case $N=2$, the blow-up argument is not helpful due to the unavailability of Hardy-type inequalities, which prevents us from identifying a concrete functional space to which the blow-up limits belong. In this case, direct computations, carried out by expanding the torsion function for the perturbed problem in Fourier series, allow us to prove the following result.
\begin{theorem}\label{thm:spher-Dim2}
    Let $N=2$,  $x_0\in\Omega$, and $\Sigma_\e:=x_0+\e B_1$. Let $n\geq1$ be such that  $\lambda_n(\Omega)$ is simple.
    \begin{itemize}
    \item[\rm (i)] If $x_0\in\Omega\setminus\mathrm{Sing}\,(\varphi_n)$, then
    \begin{equation*}
        \lambda_n(\Oe)=\lambda_n(\Omega)-\pi \e^2\left(2\abs{\nabla\varphi_n(x_0)}^2-(\lambda_n(\Omega)-1)\varphi_n^2(x_0)\right)+o(\e^2),\quad\text{as }\e\to 0.
    \end{equation*}
    \item[\rm (ii)] If $x_0\in\mathrm{Sing}\,(\varphi_n)$, then
    \begin{equation*}
        \lambda_n(\Oe)=\lambda_n(\Omega)-2k\pi \e^{2k}\left( 
    \bigg|\frac{\partial^{k}\varphi_n}{\partial x_1^k}(x_0) \bigg|^2+\frac1{k^2}\bigg|\frac{\partial^{k}\varphi_n}{\partial x_1^{k-1}
    \partial x_2}(x_0) \bigg|^2\right)+o(\e^{2k}),\quad\text{as }\e\to 0,
    \end{equation*}
    where $k\geq2$ is the vanishing order of $\varphi_n-\varphi_n(x_0)$ at $x_0$. 
    \end{itemize}
\end{theorem}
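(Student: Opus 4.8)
The strategy is to feed into the general expansion of \Cref{thm:main1} an explicit analysis of the two quantities appearing there, exploiting the fact that, when the hole is a disk, each of them decomposes into Fourier modes in the angular variable. Since $\Sigma_\e=x_0+\e B_1$ satisfies assumptions {\bf (H)} and \eqref{eq:cap-to-0} (see Lemma \ref{lemma:ext}), \Cref{thm:main1} reduces the problem to computing the asymptotics, as $\e\to0$, of $\mathcal{T}_{\overline{\Omega}_\e}(\partial\Sigma_\e,\partial_\nnu\varphi_n)$ and of $\int_{\Sigma_\e}(|\nabla\varphi_n|^2-(\lambda_n(\Omega)-1)\varphi_n^2)\dx$; once both are shown to be $O(\e^2)$ in case (i) and $O(\e^{2k})$ in case (ii), the remainders produced by \Cref{thm:main1} are automatically $o(\e^2)$, respectively $o(\e^{2k})$, and the stated formulas follow upon combining the two expansions.

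The integral term is treated by Taylor expanding the analytic function $\varphi_n$ around $x_0$. In case (i) at least one of $\varphi_n(x_0),\nabla\varphi_n(x_0)$ is nonzero (otherwise $x_0\in\mathrm{Sing}\,(\varphi_n)$), and since $|\Sigma_\e|=\pi\e^2$ one gets $\int_{\Sigma_\e}(|\nabla\varphi_n|^2-(\lambda_n(\Omega)-1)\varphi_n^2)\dx=\pi\e^2(|\nabla\varphi_n(x_0)|^2-(\lambda_n(\Omega)-1)\varphi_n^2(x_0))+o(\e^2)$. In case (ii) the eigenfunction vanishes at $x_0$ with order $k\geq2$, and — by matching the lowest-order terms in $-\Delta\varphi_n=(\lambda_n(\Omega)-1)\varphi_n$ — its leading Taylor polynomial $P_{x_0,k}^{\varphi_n}$ is a homogeneous harmonic polynomial of degree $k$, hence of the form $a\,\mathrm{Re}(z^k)+b\,\mathrm{Im}(z^k)$ in the complex variable $z$ centred at $x_0$. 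A scaling argument together with an integration by parts using harmonicity gives $\int_{\Sigma_\e}|\nabla\varphi_n|^2\dx=\e^{2k}\int_{B_1}|\nabla P_{x_0,k}^{\varphi_n}|^2\dx+o(\e^{2k})$, while $\int_{\Sigma_\e}\varphi_n^2\dx=O(\e^{2k+2})$; hence the whole term equals $\e^{2k}\int_{B_1}|\nabla P_{x_0,k}^{\varphi_n}|^2\dx+o(\e^{2k})$.

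The torsion term is the crux. Let $U_\e$ achieve $\mathcal{T}_{\overline{\Omega}_\e}(\partial\Sigma_\e,\partial_\nnu\varphi_n)$; it solves $-\Delta U_\e+U_\e=0$ in $\Omega_\e$, $\partial_\nnu U_\e=0$ on $\partial\Omega$, $\partial_\nnu U_\e=\partial_\nnu\varphi_n$ on $\partial\Sigma_\e=\partial B_\e(x_0)$, and $\mathcal{T}_{\overline{\Omega}_\e}(\partial\Sigma_\e,\partial_\nnu\varphi_n)=\int_{\partial\Sigma_\e}U_\e\,\partial_\nnu\varphi_n\ds$. In the annulus $B_{r_0}(x_0)\setminus\overline{B_\e(x_0)}$ I would expand $U_\e$ in Fourier series in the angle: its $m$-th radial coefficient solves the modified Bessel equation $v''+r^{-1}v'-(1+m^2 r^{-2})v=0$, hence is a combination $a_m I_m(r)+b_m K_m(r)$, with $a_m,b_m$ pinned down by (a) the Neumann datum on $\partial B_\e(x_0)$, whose Fourier modes come from the Taylor expansion of $\varphi_n$ — the lowest active one being mode $1$ (coefficient proportional to $\nabla\varphi_n(x_0)$) in case (i) and mode $k$ (coefficient determined by $P_{x_0,k}^{\varphi_n}$) in case (ii) — and (b) the matching with the solution on $\Omega\setminus B_{r_0}(x_0)$ through an $\e$-independent Dirichlet-to-Neumann operator on $\partial B_{r_0}(x_0)$. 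Using $I_m(r)\sim (r/2)^m/m!$ and $K_m(r)\sim\tfrac12(m-1)!(2/r)^m$ for $m\geq1$, one checks that $b_m=O(\e^{2m})$, that the outer matching only affects lower-order corrections, and hence that $\mathcal{T}_{\overline{\Omega}_\e}(\partial\Sigma_\e,\partial_\nnu\varphi_n)$ is governed by its lowest active mode: $\mathcal{T}_{\overline{\Omega}_\e}(\partial\Sigma_\e,\partial_\nnu\varphi_n)=\pi\e^2|\nabla\varphi_n(x_0)|^2+o(\e^2)$ in case (i) and $=\e^{2k}\int_{B_1}|\nabla P_{x_0,k}^{\varphi_n}|^2\dx+o(\e^{2k})$ in case (ii), the latter coinciding with the integral term because for a disk the exterior torsional energy of a harmonic polynomial equals its Dirichlet energy inside $B_1$.

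Plugging these into the expansion of \Cref{thm:main1}: in case (i) the two $O(\e^2)$ contributions add up to $-\pi\e^2(2|\nabla\varphi_n(x_0)|^2-(\lambda_n(\Omega)-1)\varphi_n^2(x_0))$, which is the asserted formula; in case (ii) they add up to $-2\e^{2k}\int_{B_1}|\nabla P_{x_0,k}^{\varphi_n}|^2\dx$, and the explicit coefficient follows by evaluating $\int_{B_1}|\nabla P_{x_0,k}^{\varphi_n}|^2\dx$ in terms of the two Fourier coefficients of the harmonic polynomial $P_{x_0,k}^{\varphi_n}$, namely the partial derivatives $\partial_{x_1}^k\varphi_n(x_0)$ and $\partial_{x_1}^{k-1}\partial_{x_2}\varphi_n(x_0)$ (the remaining $k$-th order derivatives being determined by these via harmonicity). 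The main obstacle is making the Fourier/Bessel analysis of $U_\e$ rigorous: one must show that the contribution of the outer boundary — the Dirichlet-to-Neumann matching on $\partial B_{r_0}(x_0)$, which is where the actual shape of $\Omega$ enters — and that of all Fourier modes above the lowest active one are genuinely of lower order, which requires careful bookkeeping with the small- and large-argument asymptotics of $I_m,K_m$ and a uniform control of the tail of the Taylor expansion of $\varphi_n$ near $x_0$; a subsidiary point is to verify that $P_{x_0,k}^{\varphi_n}$ is harmonic and to identify correctly the Fourier moments of $\partial_\nnu\varphi_n$ on $\partial B_\e(x_0)$.
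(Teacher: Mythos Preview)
Your overall strategy is correct and matches the paper's: reduce to \Cref{thm:main1}, then compute the asymptotics of the torsion term and the integral term separately. Your treatment of the integral term is essentially identical to the paper's (Proposition~\ref{p:phi-dnuphi-D2}), and your identification of $P_{x_0,k}^{\varphi_n}$ as harmonic when $x_0\in\mathrm{Sing}\,(\varphi_n)$, together with its Fourier representation, is exactly what the paper uses (Remark~\ref{rem:j=k}).

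Where you and the paper part ways is in the analysis of $\mathcal{T}_{\overline{\Omega}_\e}(\partial\Sigma_\e,\partial_\nnu\varphi_n)$. You propose to expand the actual torsion function $U_\e$ (solving $-\Delta U_\e+U_\e=0$) in modified Bessel modes $I_m,K_m$ on an annulus, then match to the exterior via a Dirichlet-to-Neumann operator on $\partial B_{r_0}(x_0)$. The paper avoids Bessel functions and the DtN matching entirely by a sequence of three comparison steps: first it replaces $\Omega$ by a ball $B_R$ via the domain monotonicity of the torsional rigidity (Corollary~\ref{c:monotonicity-domain}), squeezing $\Omega$ between two concentric balls; second it replaces the equation $-\Delta U+U=0$ by the pure Laplacian problem $-\Delta W=\mathrm{const}$ with zero-mean constraint (Lemma~\ref{l:confr-tor}), the error being controlled by $\|U_\e\|_{L^2}^2=o(\mathcal{T}_\e)$ from Lemma~\ref{lemma:norm2}; third it replaces the Neumann datum $\partial_\nnu\varphi_n$ by its leading polynomial $\partial_\nnu P_k^{\varphi_n}$ (Lemma~\ref{l:confr-om-nonom}). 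After these reductions the auxiliary problem on the annulus $B_R\setminus B_\e$ has explicit solutions in powers $r^{\pm i}$ and $\log r$, computed directly in Lemma~\ref{l:teRj}.

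Your route is more direct but heavier: the Bessel asymptotics and the outer matching are exactly the ``main obstacle'' you flag, and making them rigorous uniformly in the mode index requires real work. The paper's route is more modular and elementary---no special functions beyond powers and logarithms---at the price of three separate comparison lemmas. Both lead to the same leading-order answer; the paper's approach has the advantage that the dependence on the shape of $\Omega$ disappears cleanly at the first step via monotonicity, rather than being hidden in an $\e$-independent DtN operator whose contribution must be shown to be subleading.
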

\Cref{thm:spher} and \Cref{thm:spher-Dim2} provide a more explicit expression for the interface $\Gamma$ introduced in Remark \ref{rem:Gamma} and its $2$-dimensional counterpart, in the case of a spherical hole: if $\Sigma=B_1$, we have 
\[
    \Gamma=\left\{ x\in\Omega\colon \frac{N}{N-1}\abs{\nabla\varphi_n(x)}^2-(\lambda_n(\Omega)-1)\varphi_n^2(x)=0\right\}
    \setminus \mathrm{Sing}\,(\varphi_n).
\]
Some examples of interfaces $\Gamma$ are described in  \Cref{sec:spher}, for $\Omega$ being a $3$-dimensional box or a $2$-dimensional disk.

\subsection*{Notation}
In what follows, for any family $\{\Sigma_\e\}_{\e\in (0,\e_0)}$ satisfying assumption {\bf (H)}, we  denote
\begin{equation*}
    \lambda_i:=\lambda_i(\Omega)\quad\text{and}\quad \lambda_i^\e:=\lambda_i(\Omega_\e)
\end{equation*}
for all $i\in\N$, where  $\Omega_\e:=\Omega\setminus\overline{\Sigma}_\e$.
Moreover, we fix an index $n\in\N$, $n\geq 1$, such that \eqref{eq:simple} is satisfied; we recall that $\varphi_n$ is a corresponding eigenfunction such that $\int_\Omega\varphi_n^2\dx=1$. We may also denote by
\begin{equation}\label{eq:not-tau-eps}
    \mathcal{T}_\e:=\mathcal{T}_{\overline{\Omega}_\e}(\partial\Sigma_\e,\partial_\nnu\varphi_n)
\end{equation}
the Sobolev $\partial_\nnu\varphi_n$-torsional rigidity of $\partial\Sigma_\e$ relative to $\overline{\Omega}_\e$, and by
\begin{equation}\label{eq:not-U-eps}
    U_\e:=U_{\Omega,\Sigma_\e,\partial_\nnu\varphi_n}\in H^1(\Omega_\e)
\end{equation}
the function achieving it, see \eqref{eq:U-OEf} and \Cref{prop:torsion_functions}.


\section{Preliminaries}

The first part of this section is devoted to  some basic properties of the $f$-torsional rigidity of a set.

\begin{lemma}\label{lemma:equiv}
	Let $E\sub\R^N$ be an open Lipschitz  set such that $\overline{E}\sub\Omega$ and let $f\in L^2(\partial E)$. Then
	\begin{equation}\label{eq:torsion_sup}
		\mathcal{T}_{\overline{\Omega}\setminus E}(\partial E,f)=\sup \left\{\frac{\displaystyle \left(\int_{\partial E}uf\ds\right)^2}{\displaystyle \int_{\Omega\setminus E}(\abs{\nabla u}^2+u^2)\dx}\colon u\in H^1(\Omega\setminus\overline{E})\setminus\{0\}\right\}.
	\end{equation}
\end{lemma}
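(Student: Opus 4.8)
The plan is to recognize the quantity on the right-hand side as the square of the norm of a bounded linear functional on a Hilbert space, and to identify the minimization problem defining $\mathcal{T}_{\overline{\Omega}\setminus E}(\partial E,f)$ with the dual (Riesz) characterization of that same norm. Concretely, equip $H:=H^1(\Omega\setminus\overline{E})$ with the inner product $\langle u,v\rangle_H=\int_{\Omega\setminus E}(\nabla u\cdot\nabla v+uv)\dx$, which is exactly the norm appearing in both expressions. The linear functional $L(u):=\int_{\partial E}uf\ds$ is bounded on $H$: indeed, by the continuity of the trace operator $H^1(\Omega\setminus\overline{E})\to L^2(\partial E)$ (valid since $E$ is Lipschitz and $\overline E\subset\Omega$) together with Cauchy--Schwarz, $|L(u)|\le \|f\|_{L^2(\partial E)}\|u\|_{L^2(\partial E)}\le C\|u\|_H$. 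Hence the supremum in \eqref{eq:torsion_sup} is precisely $\|L\|_{H^*}^2$.

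Next I would carry out the elementary variational computation showing $\mathcal{T}_{\overline{\Omega}\setminus E}(\partial E,f)=\|L\|_{H^*}^2$ as well. For any $u\in H$ and $t\in\R$,
\[
-2J_{\Omega,E,f}(tu)=-t^2\|u\|_H^2+2tL(u),
\]
and minimizing over $t\in\R$ (for fixed $u\not\equiv 0$) gives the value $L(u)^2/\|u\|_H^2$, attained at $t=L(u)/\|u\|_H^2$. Therefore
\[
\mathcal{T}_{\overline{\Omega}\setminus E}(\partial E,f)=-2\inf_{u\in H}J_{\Omega,E,f}(u)\ge \sup_{u\in H,\,\|u\|_H=1}L(u)^2=\|L\|_{H^*}^2,
\]
which already gives the inequality ``$\ge$'' in \eqref{eq:torsion_sup}. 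For the reverse inequality, note that $-2J_{\Omega,E,f}(u)=-\|u\|_H^2+2L(u)\le -\|u\|_H^2+2\|L\|_{H^*}\|u\|_H\le \|L\|_{H^*}^2$ for every $u\in H$, by completing the square (or by the elementary bound $2ab\le a^2+b^2$ with $a=\|u\|_H$, $b=\|L\|_{H^*}$). Taking the supremum over $u\in H$ yields $\mathcal{T}_{\overline{\Omega}\setminus E}(\partial E,f)\le \|L\|_{H^*}^2$, and combining the two bounds gives \eqref{eq:torsion_sup}.

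There is no serious obstacle here; the only point requiring a little care is the boundedness of the trace functional $L$ on $H^1(\Omega\setminus\overline E)$, which is where the Lipschitz regularity of $E$ and the inclusion $\overline E\subset\Omega$ enter (these are part of the standing hypotheses in Definition \ref{def:sobolev_torsion}). Once that is in place, everything reduces to the standard duality between the quadratic minimization $\tfrac12\|u\|_H^2-L(u)$ and the functional norm $\|L\|_{H^*}$, i.e.\ the observation that the Riesz representative $U_{\Omega,E,f}\in H$ of $L$ both achieves the infimum (as recorded in \eqref{eq:U-OEf}) and satisfies $\|U_{\Omega,E,f}\|_H=\|L\|_{H^*}$, whence $\mathcal{T}_{\overline{\Omega}\setminus E}(\partial E,f)=\|U_{\Omega,E,f}\|_H^2=\|L\|_{H^*}^2$, which is the right-hand side of \eqref{eq:torsion_sup}.
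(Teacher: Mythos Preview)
Your proof is correct and follows essentially the same route as the paper: both reduce the minimization defining $\mathcal{T}_{\overline{\Omega}\setminus E}(\partial E,f)$ to an optimization over scalar multiples $tu$, with the optimal $t=L(u)/\|u\|_H^2$ yielding the ratio $L(u)^2/\|u\|_H^2$. Your framing via the dual norm $\|L\|_{H^*}$ and the separate $\geq$/$\leq$ inequalities is a slightly more expansive presentation of the same computation the paper carries out in one step.
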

\begin{proof}
By the substitution $u \mapsto t u$, the characterization of $\mathcal{T}_{\overline{\Omega}\setminus E}(\partial E,f)$ as in \Cref{def:sobolev_torsion} is equivalent to
\begin{multline}
    \label{eq:equiv_1}
		\mathcal{T}_{\overline{\Omega}\setminus E}(\partial E,f)\\
  =-2\inf\left\{\frac{t^2}{2} \int_{\Omega\setminus E}(|\nabla u|^2+u^2)\dx
	-t\int_{\partial E}uf\ds
	\colon u\in H^1(\Omega\setminus\overline{E})\setminus\{0\},\, t \in \R \right\}\\
  =-2\inf_{u\in H^1(\Omega\setminus\overline{E})\setminus\{0\}}\inf\left\{\frac{t^2}{2} \int_{\Omega\setminus E}(|\nabla u|^2+u^2)\dx
	-t\int_{\partial E}uf\ds:t\in\R\right\}.
\end{multline}
Minimizing in $t$ for a fixed $u\not\equiv0$, we find that 
$\inf_{t\in\R}\left\{\frac{t^2}{2} \int_{\Omega\setminus E}(|\nabla u|^2+u^2)\dx
	-t\int_{\partial E}uf\ds\right\}$ is
attained for
	\[
	t =\frac{\displaystyle\int_{\partial E}uf\ds}{\displaystyle\int_{\Omega\setminus E}(|\nabla u|^2+u^2)\dx}.
	\]
	Thus, substituting this into \eqref{eq:equiv_1} we complete the proof.
\end{proof}

The characterization of $\mathcal{T}_{\overline{\Omega}\setminus E}(\partial E,f)$ given in \eqref{eq:torsion_sup} easily implies  the following monotonicity property with respect to domain inclusion.
\begin{corollary}\label{c:monotonicity-domain}
    Let $\Omega_1,\Omega_2\subset\R^N$ be two connected open 
bounded Lipschitz sets and  $E\subset\R^N$ be an open Lipschitz  set such that $\overline{E}\subset\Omega_1\subset\Omega_2$.  Then, for any $f\in L^2(\partial E)$,
    \begin{equation*}
		\mathcal{T}_{\overline{\Omega_2}\setminus E}(\partial E,f)
            \leq\mathcal{T}_{\overline{\Omega_1}\setminus E}(\partial E,f).
    \end{equation*}
\end{corollary}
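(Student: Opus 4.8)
The plan is to deduce the monotonicity directly from the supremum characterization \eqref{eq:torsion_sup} provided by \Cref{lemma:equiv}, which expresses $\mathcal{T}_{\overline{\Omega}\setminus E}(\partial E,f)$ as a supremum of Rayleigh-type quotients over test functions in $H^1(\Omega\setminus\overline{E})$. The essential point is that enlarging the ambient domain from $\Omega_1$ to $\Omega_2$ has two competing effects on each quotient: it enlarges the class of admissible test functions (good for the supremum), but it also enlarges the domain of integration in the denominator $\int_{\Omega\setminus E}(|\nabla u|^2+u^2)\dx$ (bad for the supremum), while leaving the numerator $\left(\int_{\partial E}uf\ds\right)^2$ untouched since it only sees the fixed boundary $\partial E$. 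The claim is that the second effect dominates, and the way to see this is via the natural restriction map.

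Concretely, I would argue as follows. Fix $f\in L^2(\partial E)$ and let $u\in H^1(\Omega_2\setminus\overline{E})\setminus\{0\}$ be an arbitrary test function for $\mathcal{T}_{\overline{\Omega_2}\setminus E}(\partial E,f)$. Consider its restriction $v:=u\restr{\Omega_1\setminus\overline{E}}$. Since $\overline{E}\subset\Omega_1\subset\Omega_2$, we have $v\in H^1(\Omega_1\setminus\overline{E})$. The boundary integral is unchanged: the trace of $v$ on $\partial E$ equals the trace of $u$ on $\partial E$ (both $\Omega_1\setminus\overline E$ and $\Omega_2\setminus\overline E$ contain a neighborhood of $\partial E$ inside $\Omega$, so the trace is intrinsic), hence $\int_{\partial E}vf\ds=\int_{\partial E}uf\ds$. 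For the denominator, since $\Omega_1\setminus E\subset\Omega_2\setminus E$ and the integrand $|\nabla u|^2+u^2$ is nonnegative,
\[
\int_{\Omega_1\setminus E}(|\nabla v|^2+v^2)\dx
=\int_{\Omega_1\setminus E}(|\nabla u|^2+u^2)\dx
\leq\int_{\Omega_2\setminus E}(|\nabla u|^2+u^2)\dx.
\]
One must first observe that if $v\equiv 0$ then the numerator $\int_{\partial E}uf\ds$ also vanishes, so such $u$ contributes $0$ to the supremum for $\Omega_2$ and may be discarded without loss; thus we may assume $v\not\equiv 0$ and $v$ is an admissible competitor for $\mathcal{T}_{\overline{\Omega_1}\setminus E}(\partial E,f)$. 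Combining the two observations,
\[
\frac{\left(\int_{\partial E}uf\ds\right)^2}{\int_{\Omega_2\setminus E}(|\nabla u|^2+u^2)\dx}
\leq
\frac{\left(\int_{\partial E}vf\ds\right)^2}{\int_{\Omega_1\setminus E}(|\nabla v|^2+v^2)\dx}
\leq\mathcal{T}_{\overline{\Omega_1}\setminus E}(\partial E,f),
\]
and taking the supremum over all admissible $u$ for $\Omega_2$ yields $\mathcal{T}_{\overline{\Omega_2}\setminus E}(\partial E,f)\leq\mathcal{T}_{\overline{\Omega_1}\setminus E}(\partial E,f)$.

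The argument is essentially immediate once \eqref{eq:torsion_sup} is in hand, so there is no serious obstacle; the only point requiring a small amount of care is the handling of test functions $u$ for which the restriction $v$ vanishes identically on $\Omega_1\setminus\overline E$ (which can only happen if $\Omega_2\setminus\overline E$ is disconnected, but the hypothesis only assumes $\Omega_1,\Omega_2$ connected, not their complements in $\Omega$). As noted, in that degenerate case the trace of $u$ on $\partial E$ is zero — indeed $\partial E\subset\partial(\Omega_1\setminus\overline E)$ and $u$ vanishes near $\partial E$ — so the corresponding Rayleigh quotient is $0\le\mathcal{T}_{\overline{\Omega_1}\setminus E}(\partial E,f)$ trivially, and the inequality still holds. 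Alternatively, one could note that the function $f\mapsto\mathcal{T}(\partial E,f)$ being $0$ forces $\mathcal T_{\overline{\Omega_1}\setminus E}\ge 0$ always, so the bound is never in danger. This completes the proof.
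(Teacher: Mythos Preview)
Your proof is correct and follows essentially the same approach as the paper: both use the supremum characterization \eqref{eq:torsion_sup}, restrict an arbitrary test function from $\Omega_2\setminus\overline{E}$ to $\Omega_1\setminus\overline{E}$, note that the boundary integral on $\partial E$ is unchanged while the denominator can only decrease, handle separately the degenerate case where the restriction vanishes identically (forcing the trace on $\partial E$ to be zero), and then take the supremum.
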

\begin{proof}
    If $u\in H^1(\Omega_2\setminus \overline{E})\setminus\{0\}$, then its restriction, still denoted as $u$, belongs to
    $H^1(\Omega_1\setminus \overline{E})$. If $u\equiv 0$ in $\Omega_1\setminus \overline{E}$ then $u$ has null trace on $\partial E$ so that 
    \begin{equation*}
        \frac{\left(\int_{\partial E}uf\ds\right)^2}{\int_{\Omega_2\setminus E}(\abs{\nabla u}^2+u^2)\dx}=0.
    \end{equation*}
    If $u\not\equiv 0$ in $\Omega_1\setminus \overline{E}$, then $u\in H^1(\Omega_1\setminus \overline{E})\setminus\{0\}$ and, by \eqref{eq:torsion_sup}, 
    \begin{equation*}
        \frac{\left(\int_{\partial E}uf\ds\right)^2}{\int_{\Omega_2\setminus E}(\abs{\nabla u}^2+u^2)\dx}\leq
        \frac{\left(\int_{\partial E}uf\ds\right)^2}{\int_{\Omega_1\setminus E}(\abs{\nabla u}^2+u^2)\dx}\leq \mathcal{T}_{\overline{\Omega_1}\setminus E}(\partial E,f).
    \end{equation*}
    In both cases, we have 
    \begin{equation*}
        \frac{\left(\int_{\partial E}uf\ds\right)^2}{\int_{\Omega_2\setminus E}(\abs{\nabla u}^2+u^2)\dx}\leq\mathcal{T}_{\overline{\Omega_1}\setminus E}(\partial E,f)
        \quad\text{for all $u\in H^1(\Omega_2\setminus \overline{E})\setminus\{0\}$},
    \end{equation*}
    which yields the conclusion by taking the supremum over  $H^1(\Omega_2\setminus \overline{E})\setminus\{0\}$.
\end{proof}

Another relevant  consequence of the characterization  \eqref{eq:torsion_sup} is the vanishing of the $\partial_\nnu\varphi$-torsional rigidity  of $\partial\Sigma_\e$ as $\e\to0$, whenever  the family
$\{\Sigma_\e\}_{\e\in(0,\e_0)}$ satisfies assumption {\bf (H)} and $\varphi$ is any eigenfunction of problem \eqref{eq:P0}.
\begin{corollary}\label{cor:torsion-to-zero}
Let $\{\Sigma_\e\}_{\e\in(0,\e_0)}$ be a family of sets satisfying assumptions {\bf (H)} 
and 
$\varphi$ be an eigenfunction of problem \eqref{eq:P0}. Then
    \begin{equation}\label{eq:cap-tor-to0}
    \mathcal{T}_{\overline{\Omega}_\e}(\partial\Sigma_\e,\partial_\nnu\varphi)\to 0
    \quad\text{as }\e\to 0.
\end{equation}
\end{corollary}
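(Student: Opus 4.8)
The proof rests on the variational identity \eqref{eq:torsion_sup} of \Cref{lemma:equiv}: writing $\lambda$ for the eigenvalue associated with $\varphi$, it is enough to find $C_\e\to0$ as $\e\to0$ such that
\[
\abs{\int_{\partial\Sigma_\e}u\,\partial_\nnu\varphi\ds}^2\le C_\e\int_{\Omega_\e}\bigl(\abs{\nabla u}^2+u^2\bigr)\dx\qquad\text{for all }u\in H^1(\Omega_\e).
\]
Integrating by parts directly in $\Omega_\e$ (exploiting $\partial_\nnu\varphi=0$ on $\partial\Omega$) would only produce a bound of order $\norm{\varphi}_{H^1(\Omega_\e)}\norm{u}_{H^1(\Omega_\e)}$, which does not decay; the idea instead is to transfer the boundary integral to a volume integral over the \emph{hole} $\Sigma_\e$, whose Lebesgue measure vanishes by \eqref{eq:ass_sigma_4}. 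Since $u$ is defined only on $\Omega_\e$, I would first extend it to $\Omega$ through the uniformly bounded operator $\Ee$ provided by \eqref{eq:ass_sigma_3}.

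Concretely, fix $u\in H^1(\Omega_\e)\setminus\{0\}$ and set $\tilde u:=\Ee u\in H^1(\Omega)$, so that $\tilde u\restr{\Omega_\e}=u$ and $\norm{\tilde u}_{H^1(\Omega)}\le\mathfrak{C}\norm{u}_{H^1(\Omega_\e)}$. Because $\varphi$ is analytic in $\Omega$, it is smooth in a neighbourhood of the compact set $\overline{\Sigma_\e}\sub\Omega$; hence $\partial_\nnu\varphi\in L^2(\partial\Sigma_\e)$ and $\tilde u\,\nabla\varphi\in H^1(\Sigma_\e;\R^N)$. Applying the divergence theorem on the bounded Lipschitz set $\Sigma_\e$, and using $\Delta\varphi=(1-\lambda)\varphi$ together with the fact that the trace of $\tilde u$ on $\partial\Sigma_\e$ from inside $\Sigma_\e$ agrees with that of $u$ from inside $\Omega_\e$ (since $\tilde u\in H^1(\Omega)$ has no jump across $\partial\Sigma_\e$), we obtain
\[
\int_{\partial\Sigma_\e}u\,\partial_\nnu\varphi\ds=-\int_{\Sigma_\e}\dive(\tilde u\,\nabla\varphi)\dx=-\int_{\Sigma_\e}\bigl(\nabla\tilde u\cdot\nabla\varphi+(1-\lambda)\tilde u\varphi\bigr)\dx,
\]
the orientation of $\nnu$ being immaterial, since $\mathcal{T}_{\overline{\Omega}_\e}(\partial\Sigma_\e,f)$ depends on $f$ only through $\bigl(\int uf\ds\bigr)^2$. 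By the Cauchy--Schwarz inequality and \eqref{eq:ass_sigma_3},
\[
\abs{\int_{\partial\Sigma_\e}u\,\partial_\nnu\varphi\ds}\le\mathfrak{C}\,\norm{u}_{H^1(\Omega_\e)}\Bigl(\norm{\nabla\varphi}_{L^2(\Sigma_\e)}+\abs{\lambda-1}\,\norm{\varphi}_{L^2(\Sigma_\e)}\Bigr).
\]

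Dividing by $\norm{u}_{H^1(\Omega_\e)}^2=\int_{\Omega_\e}(\abs{\nabla u}^2+u^2)\dx$ and taking the supremum over $u\in H^1(\Omega_\e)\setminus\{0\}$, formula \eqref{eq:torsion_sup} yields
\[
\mathcal{T}_{\overline{\Omega}_\e}(\partial\Sigma_\e,\partial_\nnu\varphi)\le\mathfrak{C}^2\Bigl(\norm{\nabla\varphi}_{L^2(\Sigma_\e)}+\abs{\lambda-1}\,\norm{\varphi}_{L^2(\Sigma_\e)}\Bigr)^2.
\]
Since $\varphi\in H^1(\Omega)$ and $\abs{\Sigma_\e}\to0$ by \eqref{eq:ass_sigma_4}, absolute continuity of the Lebesgue integral forces $\norm{\nabla\varphi}_{L^2(\Sigma_\e)}+\norm{\varphi}_{L^2(\Sigma_\e)}\to0$, so the right-hand side vanishes and \eqref{eq:cap-tor-to0} follows. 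The one step deserving care — which I would spell out — is the justification of the divergence theorem on $\Sigma_\e$ for the vector field $\tilde u\,\nabla\varphi\in H^1(\Sigma_\e;\R^N)$ and of the interior-trace identification on $\partial\Sigma_\e$; both are classical, given that $\partial\Sigma_\e$ is Lipschitz, $\varphi$ is smooth near $\overline{\Sigma_\e}$, and $\tilde u\in H^1(\Omega)$.
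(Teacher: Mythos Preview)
Your proof is correct and follows essentially the same approach as the paper: extend $u$ to $\Ee u$ via \eqref{eq:ass_sigma_3}, apply the divergence theorem on $\Sigma_\e$ to convert the boundary integral into a volume integral over the hole, estimate by Cauchy--Schwarz, and conclude from \eqref{eq:ass_sigma_4} and absolute continuity. The paper's argument is virtually identical, differing only in that it does not pause to discuss the orientation of $\nnu$ or the trace identification, both of which you handle correctly.
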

\begin{proof}
 For every $u\in H^1(\Omega_\e)$, the Divergence Theorem,   H\"older's inequality, and assumption (H2) yield
 \begin{align*}
     \bigg|\int_{\partial \Sigma_\e}u\,\partial_{\nnu}\varphi\ds\bigg|
     &=\bigg|\int_{\Sigma_\e}\mathop{\rm div}((\Ee u)\nabla \varphi)\dx\bigg|
     =\bigg|\int_{\Sigma_\e}\Big((\Delta \varphi)(\Ee u)+\nabla(\Ee u)\cdot \nabla \varphi\Big)\dx\bigg|\\
     &\leq \big(\|\Delta\varphi\|_{L^2(\Sigma_\e)}
     +
     \|\nabla\varphi\|_{L^2(\Sigma_\e;\R^N)}\big)\|\Ee u\|_{H^1(\Omega)}
     \\
     &\leq \mathfrak{C} \|u\|_{H^1(\Omega_\e)} 
     \big((|\lambda-1|\|\varphi\|_{L^2(\Sigma_\e)}
     +
     \|\nabla\varphi\|_{L^2(\Sigma_\e;\R^N)}\big),
 \end{align*}
  where  $\lambda$ is the eigenvalue corresponding to the eigenfunction $\varphi$.
  
 	The characterization of $\mathcal{T}_{\overline{\Omega}_\e}(\partial \Sigma_\e,\partial_{\nnu}\varphi)$ given in  \eqref{eq:torsion_sup} then implies 
	\begin{equation*}
		\mathcal{T}_{\overline{\Omega}_\e}(\partial\Sigma_\e,\partial_{\nnu}\varphi)=\sup_{\substack{u\in H^1(\Oe)\\u\neq0}}\frac{\left(\int_{\partial\Sigma_\e}u\,\partial_{\nnu}\varphi\ds\right)^2}{\|u\|_{H^1(\Omega_\e)}^2} \leq 
\mathfrak{C}^2 
     \big(|\lambda-1|\|\varphi\|_{L^2(\Sigma_\e)}
     +
     \|\nabla\varphi\|_{L^2(\Sigma_\e;\R^N)}\big)^2,
  \end{equation*}
  so that the conclusion follows from assumption (H3) and the absolute continuity of Lebesgue integral.
  \end{proof}
The following proposition states that the infimum appearing in the definition of the torsional rigidity of a set is actually achieved.

\begin{proposition}\label{prop:torsion_functions}\quad

\begin{enumerate}
    \item[\rm (i)] Let $E\sub\R^N$ be an open Lipschitz set 
such that $\overline{E}\sub\Omega$ and let $f\in L^2(\partial E)$. 
Then, there exists a unique 
$U=U_{\Omega,E,f}\in H^1(\Omega\setminus\overline{E})$ 
such that
\[
\mathcal{T}_{\overline{\Omega}\setminus E}(\partial E,f)=-2 J_{\Omega,E,f}(U),
\]
with $J_{\Omega,E,f}$ being as in \Cref{def:sobolev_torsion}. 
In addition, $U\in H^1(\Omega\setminus\overline{E})$ 
is the unique function weakly satisfying
\[
\begin{bvp}
-\Delta U+U &=0, &&\text{in }\Omega\setminus \overline{E}, \\
\partial_{\nnu}U&=0, &&\text{on }\partial\Omega, \\
\partial_{\nnu}U&=f, &&\text{on }\partial E,
\end{bvp}
\]
that is
\begin{equation}
\label{eq:variationalU}
\int_{\Omega\setminus E}(\nabla U\cdot\nabla v+Uv)\dx
=\int_{\partial E}v f\ds,\quad\text{for all }v
\in H^1(\Omega\setminus\overline{E}).
\end{equation}
\item[\rm (ii)] Let $N\geq3$, $E\sub\R^N$ be an open bounded Lipschitz set and  $f\in L^2(\partial E)$. Then,  there exists a unique 
$\tilde{U}=\tilde{U}_{E,f}\in\mathcal{D}^{1,2}(\R^N\setminus E)$ 
such that
\begin{equation*}
\tau_{\R^N\setminus E}(\partial E,f)=-2 \tilde{J}_{E,f}(\tilde{U}),
\end{equation*}
where $\tilde{J}_{E,f}$ is as in \Cref{def:torsion_Rn}. 
In addition, $\tilde{U}\in\mathcal{D}^{1,2}(\R^N\setminus E)$ 
is the unique function weakly satisfying
\[
\begin{bvp}
-\Delta\tilde{U} &=0, &&\text{in }\R^N\setminus \overline{E}, \\
\partial_{\nnu}\tilde{U}&=f, &&\text{on }\partial E,
\end{bvp}
\]
that is
\begin{equation*}
\int_{\R^N\setminus E}\nabla \tilde{U}\cdot\nabla v\dx
=\int_{\partial E}vf\ds\quad\text{for all }v
\in \mathcal{D}^{1,2}(\R^N\setminus E).
\end{equation*}
\end{enumerate}
\end{proposition}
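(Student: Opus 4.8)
The plan is to recognize both items as instances of one and the same Hilbert-space minimization problem. In each case the quadratic part of the functional is, up to the factor $\frac12$, the square of the natural norm of the ambient space, while the linear (boundary) term is a bounded functional by a trace inequality; hence the functional is continuous, strictly convex and coercive, and existence and uniqueness of the minimizer follow either from the Lax--Milgram theorem or, equivalently, from the direct method of the calculus of variations. The Euler--Lagrange equation of the minimizer then yields precisely the claimed weak formulation, whose solution is in turn unique because it is the minimizer of a strictly convex functional.

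For (i), I would write $J_{\Omega,E,f}(u)=\frac12\norm{u}_{H^1(\Omega\setminus\overline E)}^2-\ell(u)$ with $\ell(u):=\int_{\partial E}uf\ds$. Since $\Omega\setminus\overline E$ is a bounded Lipschitz open set, the trace operator $H^1(\Omega\setminus\overline E)\to L^2(\partial E)$ is bounded, so by the Cauchy--Schwarz inequality $\abs{\ell(u)}\le\norm{f}_{L^2(\partial E)}\norm{u}_{L^2(\partial E)}\le C\norm{f}_{L^2(\partial E)}\norm{u}_{H^1(\Omega\setminus\overline E)}$, i.e.\ $\ell$ is a bounded linear functional on $H^1(\Omega\setminus\overline E)$. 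Hence $J_{\Omega,E,f}$ is continuous, strictly convex and coercive on this Hilbert space and admits a unique minimizer $U$. Differentiating $t\mapsto J_{\Omega,E,f}(U+tv)$ at $t=0$ for arbitrary $v\in H^1(\Omega\setminus\overline E)$ gives exactly \eqref{eq:variationalU}, which is the weak form of the stated mixed Neumann problem; conversely, any $U$ solving \eqref{eq:variationalU} is a critical point of the convex functional $J_{\Omega,E,f}$, hence its minimizer, so \eqref{eq:variationalU} characterizes $U$. Uniqueness of the weak solution is immediate: the difference $w$ of two solutions satisfies $\int_{\Omega\setminus E}(\nabla w\cdot\nabla v+wv)\dx=0$ for all $v$, and choosing $v=w$ gives $\norm{w}_{H^1(\Omega\setminus\overline E)}=0$.

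For (ii) the scheme is identical, the only new point being the boundedness of $v\mapsto\int_{\partial E}vf\ds$ on $\D^{1,2}(\R^N\setminus E)$ endowed with $\norm{\,\cdot\,}_{\D^{1,2}(\R^N\setminus E)}$. Since $E$ is bounded I would fix $R>0$ with $\overline E\subset B_R$; the trace operator $H^1(B_R\setminus\overline E)\to L^2(\partial E)$ is bounded, so $\abs{\int_{\partial E}vf\ds}\le C\norm{f}_{L^2(\partial E)}\norm{v}_{H^1(B_R\setminus E)}$. Now, using the identification $\D^{1,2}(\R^N\setminus E)=\{u\in L^{2N/(N-2)}(\R^N\setminus E):\nabla u\in L^2(\R^N\setminus E)\}$ and the Sobolev inequality $\norm{v}_{L^{2N/(N-2)}(\R^N\setminus E)}\le C\norm{\nabla v}_{L^2(\R^N\setminus E)}$ recalled above, H\"older's inequality yields $\norm{v}_{L^2(B_R\setminus E)}\le\abs{B_R}^{1/N}\norm{v}_{L^{2N/(N-2)}(\R^N\setminus E)}\le C\norm{\nabla v}_{L^2(\R^N\setminus E)}$, so that $\norm{v}_{H^1(B_R\setminus E)}\le C\norm{v}_{\D^{1,2}(\R^N\setminus E)}$ and the boundary functional is indeed bounded. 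The same Sobolev inequality shows that $\norm{\nabla\,\cdot\,}_{L^2(\R^N\setminus E)}$ is genuinely a norm, so $\D^{1,2}(\R^N\setminus E)$ with inner product $(u,v)\mapsto\int_{\R^N\setminus E}\nabla u\cdot\nabla v\dx$ is a Hilbert space, $\tilde J_{E,f}=\frac12\norm{\,\cdot\,}_{\D^{1,2}(\R^N\setminus E)}^2-\ell$ is continuous, strictly convex and coercive, and one concludes exactly as in (i): unique minimizer $\tilde U$, Euler--Lagrange equation equal to the stated weak problem, and uniqueness of the weak solution by testing the difference of two solutions with itself.

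The only step that is not entirely routine is the boundedness of the boundary functional in part (ii): it requires combining the local trace theorem on $B_R\setminus\overline E$ with the global Sobolev inequality on $\R^N\setminus E$ in order to absorb the $L^2$-part of the $H^1(B_R\setminus E)$-norm into the homogeneous Dirichlet norm, and it is precisely this point that makes the hypothesis $N\geq3$ essential. All the rest is the standard variational machinery.
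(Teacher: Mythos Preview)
Your argument is correct and is essentially the same as the paper's: both reduce the problem to a direct application of Lax--Milgram (equivalently, minimization of a coercive, strictly convex functional), the only nontrivial point being the continuity of the boundary functional in (ii), which you handle exactly as the paper does by restricting to a ball containing $\overline{E}$ and invoking the Sobolev inequality to control the local $L^2$-norm by the $\mathcal{D}^{1,2}$-norm. Your write-up is simply more detailed than the paper's one-line proof.
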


\begin{proof}
The proof is a direct application of the Lax-Milgram lemma. In particular, concerning the proof of point (ii), we observe that the functional $v\mapsto \int_{\partial E}vf\ds$ is linear and continuous on $\mathcal{D}^{1,2}(\R^N\setminus E)$. Indeed, since $E$ is bounded, $\overline{E}\subset B$ for some  ball $B$, hence the restriction map $\mathcal{D}^{1,2}(\R^N\setminus E)\to H^1(B\setminus\overline{E})$ is continuous and  there exists a continuous trace operator from $\mathcal{D}^{1,2}(\R^N\setminus E)$ to $L^2(\partial E)$.
\end{proof}

\begin{remark}\label{rmk:torsion_norm}
We observe that 
\begin{equation*}
  \mathcal{T}_{\overline{\Omega}\setminus E}(\partial E,f)=
  \int_{\Omega\setminus E}(|\nabla
  U_{\Omega,E,f}|^2+U_{\Omega,E,f}^2)\dx=\int_{\partial E}f U_{\Omega,E,f}\ds,
\end{equation*}
as one easily obtains by choosing $v=U_{\Omega,E,f}$ in \eqref{eq:variationalU}.
Similarly,
\begin{equation*}
  \tau_{\R^N\setminus E}(\partial E,f)=
\int_{\R^N\setminus E}|\nabla \tilde{U}_{E,f}|^2\dx
  =\int_{\partial E}f \tilde{U}_{E,f}\ds.
\end{equation*}
\end{remark}

The following lemma provides a comparison between the $L^2$-norm of the torsion function and the torsional rigidity as $\e\to0$.

\begin{lemma}\label{lemma:norm2}
	Let $\{\Sigma_\e\}_{\e\in(0,\e_0)}$ satisfy assumptions {\bf (H)} and \eqref{eq:cap-to-0}. If $\mathcal{T}_\e\to 0$ as $\e\to 0$, then 
	\begin{equation*}
		\int_{\Oe}U_\e^2\dx =o(\Te),\quad\text{as }\e\to 0,
	\end{equation*}		
 with $\Te$ and $U_\e$ being as in \eqref{eq:not-tau-eps} and \eqref{eq:not-U-eps} respectively.
\end{lemma}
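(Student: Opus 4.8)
The plan is to argue by contradiction, combining a compactness argument based on the uniform extension property \eqref{eq:ass_sigma_3} with the capacity hypothesis \eqref{eq:cap-to-0} and the supremum characterization of the torsional rigidity from \Cref{lemma:equiv}. Since $U_\e\equiv0$ whenever $\mathcal{T}_\e=0$, I may assume $\mathcal{T}_\e>0$ and set $W_\e:=U_\e/\sqrt{\mathcal{T}_\e}$, so that $\|W_\e\|_{H^1(\Omega_\e)}^2=1$ by \Cref{rmk:torsion_norm}; since $\|U_\e\|_{L^2(\Omega_\e)}^2=\mathcal{T}_\e\,\|W_\e\|_{L^2(\Omega_\e)}^2$, it suffices to show $\|W_\e\|_{L^2(\Omega_\e)}\to0$ as $\e\to0$. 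Suppose this fails: then there are $\delta>0$ and a sequence $\e\to0$ (not relabeled) along which $\|W_\e\|_{L^2(\Omega_\e)}^2\geq\delta$. By \eqref{eq:ass_sigma_3} the extensions $\Ee W_\e$ are bounded in $H^1(\Omega)$, hence, up to a subsequence, $\Ee W_\e\weak W$ in $H^1(\Omega)$ and $\Ee W_\e\to W$ in $L^2(\Omega)$ by the compact embedding $H^1(\Omega)\hookrightarrow L^2(\Omega)$. From $\|\Ee W_\e\|_{L^2(\Omega)}^2\geq\|W_\e\|_{L^2(\Omega_\e)}^2\geq\delta$ one gets $\|W\|_{L^2(\Omega)}^2\geq\delta$, so $W\not\equiv0$. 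I will reach a contradiction by proving $W\equiv0$.

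By \eqref{eq:variationalU}, after dividing by $\sqrt{\mathcal{T}_\e}$, for every $v\in H^1(\Omega_\e)$ one has
\[
\int_{\Omega_\e}\big(\nabla W_\e\cdot\nabla v+W_\e v\big)\dx=\frac{1}{\sqrt{\mathcal{T}_\e}}\int_{\partial\Sigma_\e}v\,\partial_\nnu\varphi_n\ds .
\]
I would fix $\phi\in C^\infty(\overline{\Omega})$, extend it to $\widetilde\phi\in C^\infty_{\rm c}(\R^N)$, and test this identity with $v=\widetilde\phi|_{\Omega_\e}$. Writing $\nabla W_\e=\nabla(\Ee W_\e)$ and $W_\e=\Ee W_\e$ on $\Omega_\e$ and splitting $\int_{\Omega_\e}=\int_\Omega-\int_{\Sigma_\e}$, the contribution over $\Omega$ converges to $\int_\Omega(\nabla W\cdot\nabla\phi+W\phi)\dx$ by weak $H^1$-convergence against the fixed function $\widetilde\phi$, while the contribution over $\Sigma_\e$ is bounded by $\mathfrak{C}\,\|\widetilde\phi\|_{H^1(\Sigma_\e)}$, which vanishes by \eqref{eq:ass_sigma_4} and absolute continuity of the integral. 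For the right-hand side I would use \eqref{eq:cap-to-0} to pick $\eta_\e\in H^1(\R^N)$ with $0\leq\eta_\e\leq1$, $\eta_\e\equiv1$ in an open neighbourhood of $\overline{\Sigma_\e}$, and $\|\eta_\e\|_{H^1(\R^N)}\to0$. Since $\widetilde\phi(1-\eta_\e)$ vanishes in a neighbourhood of $\overline{\Sigma_\e}$, its trace on $\partial\Sigma_\e$ is zero, so $\int_{\partial\Sigma_\e}\widetilde\phi\,\partial_\nnu\varphi_n\ds=\int_{\partial\Sigma_\e}(\widetilde\phi\,\eta_\e)\,\partial_\nnu\varphi_n\ds$, and \Cref{lemma:equiv}, applied to the test function $\widetilde\phi\,\eta_\e|_{\Omega_\e}\in H^1(\Omega_\e)$, gives
\[
\Big|\frac{1}{\sqrt{\mathcal{T}_\e}}\int_{\partial\Sigma_\e}\widetilde\phi\,\partial_\nnu\varphi_n\ds\Big|
\leq\|\widetilde\phi\,\eta_\e\|_{H^1(\Omega_\e)}
\leq\|\widetilde\phi\,\eta_\e\|_{H^1(\R^N)}
\leq C_{\widetilde\phi}\,\|\eta_\e\|_{H^1(\R^N)}\longrightarrow 0 ,
\]
where $C_{\widetilde\phi}$ depends only on $\|\widetilde\phi\|_{W^{1,\infty}(\R^N)}$. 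Passing to the limit in the displayed identity would therefore yield $\int_\Omega(\nabla W\cdot\nabla\phi+W\phi)\dx=0$ for every $\phi\in C^\infty(\overline{\Omega})$, hence for every $\phi\in H^1(\Omega)$ by density; taking $\phi=W$ forces $\|W\|_{H^1(\Omega)}^2=0$, i.e.\ $W\equiv0$, contradicting $\|W\|_{L^2(\Omega)}^2\geq\delta$.

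I expect the only genuinely delicate point to be controlling the decay of the boundary term $\int_{\partial\Sigma_\e}\phi\,\partial_\nnu\varphi_n\ds$ relative to $\sqrt{\mathcal{T}_\e}$: a direct estimate via the divergence theorem only yields $O(|\Sigma_\e|)$, which is insufficient because $\mathcal{T}_\e$ need not dominate $|\Sigma_\e|^2$ (for instance when the hole sits at a high-order zero of $\varphi_n$, as in the shrinking-hole regime). Extracting the capacity cutoff $\eta_\e$ and then invoking the variational characterization of $\mathcal{T}_\e$ replaces this by $\sqrt{\mathcal{T}_\e}$ times the infinitesimal factor $\|\eta_\e\|_{H^1(\R^N)}$, which is exactly what makes the division by $\sqrt{\mathcal{T}_\e}$ harmless; the remaining passages to the limit — weak $H^1$-convergence tested against fixed functions, the vanishing of integrals over $\Sigma_\e$ by $|\Sigma_\e|\to0$, and density of $C^\infty(\overline{\Omega})$ in $H^1(\Omega)$ — are routine.
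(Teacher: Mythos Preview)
Your proof is correct and follows the same overall architecture as the paper's: argue by contradiction, normalize, extend uniformly into $H^1(\Omega)$ via \eqref{eq:ass_sigma_3}, extract a nonzero weak limit $W$, and derive $W\equiv0$ by passing to the limit in the equation for $U_\e$. The one genuine technical difference lies in how the boundary term is neutralized. The paper inserts the capacity cutoff into the \emph{test function}, replacing a fixed $v\in C^\infty(\overline{\Omega})$ by $v_j=v(1-u_{\e_j})$, which vanishes near $\overline{\Sigma_{\e_j}}$; then the boundary integral is identically zero and no estimate is needed at all. You instead keep the test function fixed, rewrite $\int_{\partial\Sigma_\e}\widetilde\phi\,\partial_\nnu\varphi_n\ds$ as $\int_{\partial\Sigma_\e}(\widetilde\phi\,\eta_\e)\,\partial_\nnu\varphi_n\ds$, and bound it via the sup characterization \eqref{eq:torsion_sup} by $\sqrt{\mathcal T_\e}\,\|\widetilde\phi\,\eta_\e\|_{H^1(\Omega_\e)}$, so that dividing by $\sqrt{\mathcal T_\e}$ leaves an $o(1)$ term. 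Your route makes explicit why the decay of the boundary term relative to $\sqrt{\mathcal T_\e}$ is exactly governed by the capacity, which is a nice conceptual point; the paper's route is slightly cleaner in that it avoids any estimate on the right-hand side and does not need to invoke \Cref{lemma:equiv}. Either way the remaining steps are routine, and your treatment of the left-hand side (splitting $\int_{\Omega_\e}=\int_\Omega-\int_{\Sigma_\e}$ and using $|\Sigma_\e|\to0$) is also slightly different from the paper's, but equally valid.
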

\begin{proof}
	Let us assume by contradiction that there exist a constant $C>0$  and a sequence
	$\{\e_j\}_{j\geq1}$ such that $\lim_{j\to\infty}\e_j=0$, $U_{\e_j}\not\equiv0$ and
	\[
	\frac{\displaystyle\int_{\Omega_{\e_j}}U_{\e_j}^2\dx}{\mathcal{T}_{\e_j}} = 
	\frac{\displaystyle\int_{\Omega_{\e_j}}U_{\e_j}^2\dx}{\norm{U_{\e_j}}_{H^1(\Omega_{\e_j})}^2} 
	\geq C\quad\text{for all }j \geq 1,
	\]
 see Remark \ref{rmk:torsion_norm}.
	For any $\e$, let us consider the extension to the whole $\Omega$ 
	of $U_\e$, i.e. 
	\[
	\tilde{U}_\e:=\Ee U_\e\in H^1(\Omega),
	\]
 being $\Ee$ as in \eqref{eq:ass_sigma_3}.
	Letting $W_j:=\tilde{U}_{\e_j}/\|\tilde{U}_{\e_j}\|_{L^2(\Omega)}$, 
	we have $\norm{W_j}_{L^2(\Omega)}=1$ and
	\[
	\norm{W_j}_{H^1(\Omega)}
	=\frac{\|\tilde{U}_{\e_j}\|_{H^1(\Omega)}}{\|\tilde{U}_{\e_j}\|_{L^2(\Omega)}}
	\leq \frac{\mathfrak{C} \norm{U_{\e_j}}_{H^1(\Omega_{\e_j})}}{\norm{U_{\e_j}}_{L^2(\Omega_{\e_j})}}\leq \frac{\mathfrak{C}}{\sqrt{ C}}.
	\]
	Therefore, there exists $W\in H^1(\Omega)$ such that, along a subsequence (still denoted by $\{W_j\}$), 	
	\begin{equation*}
		W_{j}\weak W\quad\text{weakly in }H^1(\Omega)\quad\text{and}\quad
		W_{j}\to W\quad\text{strongly in }L^2(\Omega)
	\end{equation*}
	as $j\to \infty$. From the strong $L^2(\Omega)$-convergence 
	we immediately infer that $\norm{W}_{L^2(\Omega)}=1$, 
	which in turn tells us that $W\not\equiv 0$.

Let $v\in C^\infty(\overline{\Omega})$. By assumption \eqref{eq:cap-to-0},
there exists $\{u_\e\}_{\e\in(0,1)}\subset C^\infty_{\rm c}(\R^N)$ such that $u_\e=1$  in a neighborhood of $\overline{\Sigma}_\e$ and $\|u_\e\|_{H^1(\R^N)}\to 0$ as $\e\to 0$.
Letting $v_j=v(1-u_{\e_j}\big|_\Omega)$, we observe that  $v_j\in C^\infty({\overline{\Omega}})$, $v_j\equiv 0$ in a neighbourhood of $\overline{\Sigma}_{\e_j}$, and $v_j\to v$ strongly in $H^1(\Omega)$, as $j\to\infty$. Then, from the weak convergence $W_{j}\weak W$ in $H^1(\Omega)$ it follows that
	\[
	\int_\Omega(\nabla W_{j}\cdot\nabla v_j+W_{j}v_j)\dx
	\to \int_\Omega(\nabla W\cdot\nabla v+Wv)\dx,\quad\text{as }j\to \infty.
	\]
	On the other hand, equation \eqref{eq:variationalU} and the fact that 
 $v_j\equiv 0$ in a neighbourhood of $\overline{\Sigma}_{\e_j}$
 imply that
	\begin{align*}
		\int_\Omega(\nabla W_{j}\cdot\nabla v_j+W_{j}v_j)\dx&=
		\frac{1}{\|\tilde{U}_{\e_j}\|_{L^2(\Omega)}}
		\int_{\Omega_{\e_j}}(\nabla U_{\e_j}\cdot\nabla v_j+U_{\e_j}v_j)\dx \\
		&=\frac{1}{\|\tilde{U}_{\e_j}\|_{L^2(\Omega)}}
		\int_{\partial\Sigma_{\e_j}}v_j\partial_{\nnu}\varphi_n\ds=0,
	\end{align*}
	for all $j\in\N$. Therefore, we conclude that
	\[
	\int_\Omega(\nabla W\cdot\nabla v+Wv)\dx=0
	\]
for every $v\in C^\infty(\overline{\Omega})$, and, by density, for every $v\in  H^1(\Omega)$. This  implies that $W=0$, thus giving rise to a contradiction.
\end{proof}

We conclude this section by proving \eqref{eq:choose-phi-eps-2}.
\begin{lemma}\label{l:con-auto}
    Let  $\{\Sigma_\e\}_{\e\in(0,\e_0)}$ satisfy  {\bf (H)} and \eqref{eq:cap-to-0} and $n\geq1$ be such that \eqref{eq:simple} holds. If, for every $\e\in(0,\e_0)$, $\varphi_n^\e$ is an eigenfunction of \eqref{eq:Peps} associated to the eigenvalue $\lambda_n^\e$ and chosen in such a way that $\int_{\Omega_\e}|\varphi_n^\e|^2\dx=1$ and \eqref{eq:choose-phi-eps-1} is satisfied, then $\lim_{\e\to0}\norm{\varphi_n^\e-\varphi_n}_{H^1(\Omega_\e)}=0$.    
\end{lemma}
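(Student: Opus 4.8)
The plan is to run the classical compactness scheme for Neumann spectral stability (as in \cite{Rauch1975}), adapted to the extension-operator framework of assumption \textbf{(H)}. First I would derive a uniform bound: choosing $\varphi_n^\e$ itself as test function in the weak formulation of \eqref{eq:Peps} gives $\|\varphi_n^\e\|_{H^1(\Omega_\e)}^2=\lambda_n^\e\|\varphi_n^\e\|_{L^2(\Omega_\e)}^2=\lambda_n^\e$, which is bounded uniformly in $\e$ because $\lambda_n^\e\to\lambda_n(\Omega)$ by \eqref{eq:conv-eigenvalues}. Setting $\widetilde\varphi^\e:=\Ee\varphi_n^\e\in H^1(\Omega)$, assumption \eqref{eq:ass_sigma_3} then yields $\|\widetilde\varphi^\e\|_{H^1(\Omega)}\le\mathfrak{C}\,\|\varphi_n^\e\|_{H^1(\Omega_\e)}$, bounded. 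Hence, along any sequence $\e_j\to0$, up to a subsequence (not relabeled) there is $\psi\in H^1(\Omega)$ with $\widetilde\varphi^{\e_j}\weak\psi$ in $H^1(\Omega)$ and $\widetilde\varphi^{\e_j}\to\psi$ in $L^2(\Omega)$.

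The core step is the identification $\psi=\varphi_n$. To pass to the limit in the equation I reuse the cut-off construction from the proof of Lemma \ref{lemma:norm2}: by \eqref{eq:cap-to-0} there exist $u_\e\in C^\infty_{\rm c}(\R^N)$ with $u_\e\equiv1$ near $\overline{\Sigma}_\e$ and $\|u_\e\|_{H^1(\R^N)}\to0$; given $v\in C^\infty(\overline\Omega)$, the functions $v_j:=v(1-u_{\e_j}|_\Omega)$ are smooth, vanish in a neighbourhood of $\overline{\Sigma}_{\e_j}$, and satisfy $v_j\to v$ in $H^1(\Omega)$. Using $v_j|_{\Omega_{\e_j}}$ as test function in the weak formulation for $\varphi_n^{\e_j}$, and noting that both $v_j$ and $\nabla v_j$ vanish on $\Sigma_{\e_j}$, I may rewrite the integrals over $\Omega_{\e_j}$ as integrals over $\Omega$ of the extended function $\widetilde\varphi^{\e_j}$; letting $j\to\infty$ (weak $H^1$ convergence of $\widetilde\varphi^{\e_j}$ paired with strong $H^1$ convergence of $v_j$, together with $\lambda_n^{\e_j}\to\lambda_n(\Omega)$) gives $\int_\Omega(\nabla\psi\cdot\nabla v+\psi v)\dx=\lambda_n(\Omega)\int_\Omega\psi v\dx$ for all $v\in C^\infty(\overline\Omega)$, hence for all $v\in H^1(\Omega)$ by density. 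To exclude $\psi\equiv0$, I note that $\|\widetilde\varphi^{\e_j}\|_{L^2(\Omega)}^2=1+\|\widetilde\varphi^{\e_j}\|_{L^2(\Sigma_{\e_j})}^2$, and the last term vanishes since $\|\widetilde\varphi^{\e_j}\|_{L^2(\Sigma_{\e_j})}^2\le2\|\widetilde\varphi^{\e_j}-\psi\|_{L^2(\Omega)}^2+2\|\psi\|_{L^2(\Sigma_{\e_j})}^2\to0$ by the $L^2$ convergence, \eqref{eq:ass_sigma_4}, and absolute continuity of the Lebesgue integral; thus $\|\psi\|_{L^2(\Omega)}=1$. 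So $\psi$ is an $L^2$-normalized eigenfunction of \eqref{eq:P0} associated to the simple eigenvalue $\lambda_n(\Omega)$, whence $\psi=\pm\varphi_n$; finally $\int_\Omega\psi\varphi_n\dx=\lim_j\big(\int_{\Omega_{\e_j}}\varphi_n^{\e_j}\varphi_n\dx+\int_{\Sigma_{\e_j}}\widetilde\varphi^{\e_j}\varphi_n\dx\big)\ge0$ by \eqref{eq:choose-phi-eps-1} and the vanishing of the $\Sigma_{\e_j}$-term, which forces $\psi=\varphi_n$.

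It then remains to upgrade weak convergence to strong $H^1(\Omega_\e)$ convergence. I would expand
\[
\|\varphi_n^{\e_j}-\varphi_n\|_{H^1(\Omega_{\e_j})}^2=\|\varphi_n^{\e_j}\|_{H^1(\Omega_{\e_j})}^2-2\int_{\Omega_{\e_j}}(\nabla\varphi_n^{\e_j}\cdot\nabla\varphi_n+\varphi_n^{\e_j}\varphi_n)\dx+\|\varphi_n\|_{H^1(\Omega_{\e_j})}^2.
\]
The first term equals $\lambda_n^{\e_j}\to\lambda_n(\Omega)$; the third equals $\|\varphi_n\|_{H^1(\Omega)}^2-\|\varphi_n\|_{H^1(\Sigma_{\e_j})}^2=\lambda_n(\Omega)-o(1)$ by \eqref{eq:ass_sigma_4}; for the middle term I test the weak formulation for $\varphi_n^{\e_j}$ against $\varphi_n|_{\Omega_{\e_j}}\in H^1(\Omega_{\e_j})$, obtaining $\int_{\Omega_{\e_j}}(\nabla\varphi_n^{\e_j}\cdot\nabla\varphi_n+\varphi_n^{\e_j}\varphi_n)\dx=\lambda_n^{\e_j}\int_{\Omega_{\e_j}}\varphi_n^{\e_j}\varphi_n\dx$, while $\int_{\Omega_{\e_j}}\varphi_n^{\e_j}\varphi_n\dx=\int_\Omega\widetilde\varphi^{\e_j}\varphi_n\dx-\int_{\Sigma_{\e_j}}\widetilde\varphi^{\e_j}\varphi_n\dx\to\int_\Omega\varphi_n^2\dx=1$. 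Hence the right-hand side tends to $\lambda_n(\Omega)-2\lambda_n(\Omega)+\lambda_n(\Omega)=0$. Since every sequence $\e_j\to0$ admits such a subsequence with the same limit $\varphi_n$, a standard Urysohn subsequence argument gives $\|\varphi_n^\e-\varphi_n\|_{H^1(\Omega_\e)}\to0$ as $\e\to0$. The main obstacle is the passage to the limit in the equation, which genuinely requires \eqref{eq:cap-to-0} in order to build admissible test functions vanishing near the holes; tracking the ``mass leaking'' into $\Sigma_\e$ is routine and controlled throughout by \eqref{eq:ass_sigma_4}.
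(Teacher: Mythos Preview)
Your proof is correct and follows essentially the same strategy as the paper: uniform $H^1$ bounds via the extension operator, identification of the weak limit through the capacity-based cut-off test functions, and an energy computation to upgrade to strong convergence. The only cosmetic differences are that you control $\|\widetilde\varphi^{\e_j}\|_{L^2(\Sigma_{\e_j})}$ by the triangle inequality against the already-identified $L^2$ limit (the paper uses a H\"older/Sobolev-embedding estimate instead), and you obtain the norm convergence by expanding $\|\varphi_n^{\e_j}-\varphi_n\|_{H^1(\Omega_{\e_j})}^2$ directly rather than testing the equation with $\varphi_n^\e-\varphi_n$.
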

\begin{proof}
    Since $\varphi_n^\e$ solve \eqref{eq:Peps} with $\lambda=\lambda_n^\e$, from \eqref{eq:conv-eigenvalues} and \eqref{eq:ass_sigma_3} it follows that 
    (possibly choosing $\e_0$ smaller)
    $\{\mathsf{E}_\e\varphi_n^\e\}_{\e\in (0,\e_0)}$ is bounded in $H^1(\Omega)$. Therefore, for every sequence $\e_j\to0^+$, there exist a subsequence (still denoted as $\e_j$) and $\tilde\varphi\in H^1(\Omega)$ such that $\mathsf{E}_{\e_j}\varphi_n^{\e_j}\rightharpoonup
\tilde\varphi$ weakly in $H^1(\Omega)$ as $j\to\infty$.

Let $v\in C^\infty(\overline{\Omega})$. Arguing as in the proof of Lemma \ref{lemma:norm2}, thanks to assumption \eqref{eq:cap-to-0} we can find a sequence $\{v_j\}$ such that  $v_j\in C^\infty({\overline{\Omega}})$, $v_j\equiv 0$ in a neighbourhood of $\overline{\Sigma}_{\e_j}$, and $v_j\to v$ strongly in $H^1(\Omega)$, as $j\to\infty$. From the equation satisfied by $\varphi_n^\e$ we have 
\begin{equation*}
    \int_\Omega \left(\nabla(\mathsf{E}_{\e_j}\varphi_n^{\e_j})\cdot\nabla v_j+(\mathsf{E}_{\e_j}\varphi_n^{\e_j}) v_j
    \right)\dx=\lambda_n^{\e_j}\int_\Omega (\mathsf{E}_{\e_j}\varphi_n^{\e_j}) v_j
    \dx,
\end{equation*}
passing to the limit in which we obtain, taking into account \eqref{eq:conv-eigenvalues},
\begin{equation}\label{eq:limi}
    \int_\Omega \left(\nabla\tilde\varphi\cdot\nabla v+\tilde\varphi v
    \right)\dx=\lambda_n\int_\Omega \tilde\varphi v
    \dx,
\end{equation}
for every $v\in C^\infty(\overline{\Omega})$ and hence, by density, 
for every $v\in H^1(\Omega)$.

Since, for any $p>2$, 
\begin{equation*}
    \int_{\Sigma_\e}|\mathsf{E}_{\e}\varphi_n^{\e}|^2\dx\leq 
    \left(\int_{\Omega}|\mathsf{E}_{\e}\varphi_n^{\e}|^p\dx\right)^{2/p}|\Sigma_\e|^{(p-2)/p},
\end{equation*}
by assumption \eqref{eq:ass_sigma_4}, Sobolev embeddings and boundedness of 
$\{\mathsf{E}_\e\varphi_n^\e\}_{\e\in (0,\e_0)}$ in $H^1(\Omega)$ we deduce that 
\begin{equation}\label{eq:l2su-sigma}\lim_{\e\to0}\int_{\Sigma_\e}|\mathsf{E}_{\e}\varphi_n^{\e}|^2\dx=0.
\end{equation}
Hence 
\begin{align}\label{eq:limi2}
    \int_\Omega |\tilde \varphi|^2\dx&=\lim_{j\to\infty}
    \int_\Omega |\mathsf{E}_{\e_j}\varphi_n^{\e_j}|^2\dx
    \\
    \notag &=\lim_{j\to\infty}
    \left(\int_{\Omega_{\e_j}} |\varphi_n^{\e_j}|^2\dx+
    \int_{\Sigma_{\e_j}} |\mathsf{E}_{\e_j}\varphi_n^{\e_j}|^2\dx
    \right)=\lim_{j\to\infty}(1+o(1))=1
\end{align}
and, in view of \eqref{eq:choose-phi-eps-1}, 
\begin{equation}\label{eq:limi3}
    \int_\Omega \tilde \varphi \varphi_n\dx=\lim_{j\to\infty}\int_\Omega
    (\mathsf{E}_{\e_j}\varphi_n^{\e_j})\varphi_n\dx
    =\lim_{j\to\infty}\left(\int_{\Omega_{\e_j}}
    \varphi_n^{\e_j}\varphi_n\dx+o(1)\right)\geq0.
\end{equation}
In view of assumption \eqref{eq:simple}, \eqref{eq:limi}, \eqref{eq:limi2}, and \eqref{eq:limi3} imply that $\tilde\varphi=\varphi_n$. In view of Urysohn's subsequence principle, we conclude that 
\begin{equation}\label{eq:con-est}
    \mathsf{E}_{\e}\varphi_n^\e\rightharpoonup \varphi_n\quad\text{as }\e\to0\quad\text{weakly in } H^1(\Omega).
\end{equation}
By \eqref{eq:con-est} and compactness of the embedding $H^1(\Omega)\subset L^2(\Omega)$ we have $\lim_{\e\to 0}\|\mathsf{E}_{\e}\varphi_n^\e- \varphi_n\|_{L^2(\Omega)}=0$, hence 
\begin{equation}\label{eq:con-est2}
    \|\varphi_n^\e- \varphi_n\|_{L^2(\Omega_\e)}\to 0\quad\text{as }\e\to0.
\end{equation}
Testing the equation satisfied by $\varphi_n^\e$ with
$\varphi_n^\e-\varphi_n$ and taking into account \eqref{eq:l2su-sigma} and \eqref{eq:con-est} we obtain 
\begin{align}\label{eq:f1}
    \int_{\Omega_\e}\nabla \varphi_n^\e&\cdot \nabla(\varphi_n^\e-\varphi_n)\dx=(\lambda_n^\e-1) \int_{\Omega_\e} \varphi_n^\e(\varphi_n^\e-\varphi_n)\dx\\
    \notag &=(\lambda_n^\e-1) \left(1-\int_{\Omega_\e} \varphi_n^\e\varphi_n\dx\right)\\
\notag&    =(\lambda_n^\e-1) \left(1-\int_{\Omega} (\mathsf{E}_{\e}\varphi_n^\e)\varphi_n\dx+
    \int_{\Sigma_\e} (\mathsf{E}_{\e}\varphi_n^\e)\varphi_n\dx\right)=o(1)
\end{align}
as $\e\to0$. Furthermore, 
\begin{equation*}
    \left|\int_{\Sigma_\e}\nabla\varphi_n\cdot\nabla(\mathsf{E}_{\e}\varphi_n^\e-\varphi_n)\dx   \right|\leq \| \mathsf{E}_{\e}\varphi_n^\e-\varphi_n\|_{H^1(\Omega)}\|\nabla\varphi_n\|_{L^2(\Sigma_\e)}=o(1)\quad\text{as }\e\to0,
\end{equation*}
so that, in view of  \eqref{eq:con-est},
\begin{equation}\label{eq:f2}
    \int_{\Omega_\e}\nabla \varphi_n\cdot \nabla(\varphi_n^\e-\varphi_n)\dx=\int_\Omega
    \nabla \varphi_n\cdot \nabla(\mathsf{E}_{\e}\varphi_n^\e-\varphi_n)\dx-\int_{\Sigma_\e}\nabla\varphi_n\cdot\nabla(\mathsf{E}_{\e}\varphi_n^\e-\varphi_n)\dx  =o(1)
\end{equation}
as $\e\to0$. Combining \eqref{eq:f1} and \eqref{eq:f2} we obtain 
\begin{equation}\label{eq:con-est3}
    \int_{\Omega_\e}|\nabla(\varphi_n^\e-\varphi_n)|^2\dx\to 0\quad\text{as }\e\to0
\end{equation}
The conclusion follows from \eqref{eq:con-est2} and \eqref{eq:con-est3}.
\end{proof}



\section{Asymptotics of simple eigenvalues}

The aim of this section is to prove  \Cref{thm:main1}. To this end, we apply the \enquote{Lemma on small eigenvalues}  due to  Colin de Verdiére \cite{ColindeV1986}, which is stated  in the Appendix, see \Cref{lemma:sm_eig}. The underlying idea is that  good approximations of perturbed eigenfunctions induce good approximations of perturbed eigenvalues. 

\begin{proof}[Proof of \Cref{thm:main1}]  
We first observe that, in view of \eqref{eq:cap-tor-to0} and Remark \ref{rmk:torsion_norm}, 
\begin{equation}\label{eq:main9}
\lim_{\e\to0}\|\varphi_n-U_\e\|_{L^2(\Omega_\e)}^2=
\|\varphi_n\|_{L^2(\Omega)}^2=1;
\end{equation}
 hence, possibly choosing $\e_0$ smaller from the beginning, $\varphi_n-U_\e\not\equiv0$ in $\Omega_\e$ and
\begin{equation}\label{eq:varphi-n-Ueps}
   2\geq \|\varphi_n-U_\e\|_{L^2(\Omega_\e)}\geq \frac{1}{2}
\end{equation}
for all $\e\in(0,\e_0)$.
 In order to apply \Cref{lemma:sm_eig} in our setting, we  fix $\e\in(0,\e_0)$ and define:
\begin{align*}
&\mathcal{H} := L^2(\Oe), \text{ with } (\cdot,\cdot) := (\cdot,\cdot)_{L^2(\Oe)}~\text{and }\norm{\cdot}:=\norm{\cdot}_{L^2(\Omega_\e)}; \\
&\D:= H^1(\Oe); \\
&q(u,v) := \int_{\Oe} (\nabla u\cdot\nabla v+uv) \dx -\lambda_n \int_{\Oe} uv \dx,\quad \text{for every } u,v \in \D; \\
&f := \frac{\varphi_n-U_\e}{\norm{\varphi_n-U_\e}}.
\end{align*}
We observe that
$\lambda_n^\e-\lambda_n$
is an eigenvalue of $q$ and an associated  normalized eigenfunction is given by $\varphi_n^\e$; hence assumption (i) in Lemma \ref{lemma:sm_eig} is satisfied with 
\begin{equation*}
    \lambda:=\lambda_n^\e-\lambda_n,\quad 
    \phi:=\varphi_n^\e.
\end{equation*}
Letting $H_1=\mathop{\rm span}\{\varphi_k^\e:0\leq k<n\}$
and $H_2=\overline{\mathop{\rm span}\{\varphi_k^\e:k>n\}}$, we observe that $H_1,H_2$ are mutually orthogonal in $L^2(\Omega_\e)$, $\{\phi\}^\perp=H_1\oplus H_2$, and condition \eqref{eq:ortho-q} is satisfied.

We are going to estimate the corresponding values $\delta$, $\gamma_1$, and $\gamma_2$ defined in \eqref{eqLdef-delta}, \eqref{eq:sm_eig_hp1}, and \eqref{eq:sm_eig_hp1-bis},  respectively. For what concerns the former, for any $v\in\D\setminus \{0\}$ we have
\begin{align*}
&q(f,v) = \frac{1}{\norm{\varphi_n-U_\e}}\,q(\varphi_n - U_\e,v) \\
&= \frac{1}{\norm{\varphi_n-U_\e}}\int_{\Oe} \left(\nabla (\varphi_n - U_\e) \cdot\nabla v + (1-\lambda_n) (\varphi_n - U_\e)v \right)\dx \\
&= \frac{1}{\norm{\varphi_n-U_\e}}\left(\int_{\Oe} \left(\nabla \varphi_n \cdot\nabla v + \varphi_n v \right)\dx - \int_\Oe \big(\nabla U_\e \cdot\nabla v + U_\e v\big) \dx - \lambda_n \int_\Oe (\varphi_n - U_\e)v \dx\right) \\
&= \frac{\lambda_n}{\norm{\varphi_n-U_\e}} \int_\Oe U_\e v \dx,
\end{align*}
where the last equality follows from the equations satisfied by
$\varphi_n$ and $U_\e$ respectively, see \eqref{eq:weak_form} and \eqref{eq:variationalU}. 
Combining this with \eqref{eq:varphi-n-Ueps} and the Cauchy-Schwarz inequality, we obtain that
\[
    \delta\leq 2\lambda_n\norm{U_\e}\quad\text{for every $\e\in(0,\e_0)$}.
\]
Since $\lambda_n$ is simple and  $\lim_{\e\to0}\lambda_i^\e=\lambda_i$  for all $i\in\N$, if $\e$ is sufficiently small we have 
 \begin{align*}
&\gamma_1=\inf\left\{ \frac{\abs{q(v,v)}}{\norm{v}^2}\colon v\in H_1\setminus\{0\}\right\}=\lambda_n-\lambda_{n-1}^\e>0,\\
&\gamma_2=\inf\left\{ \frac{\abs{q(v,v)}}{\norm{v}^2}\colon v\in (H_2\cap\mathcal D)\setminus\{0\}\right\}=\lambda_{n+1}^\e-\lambda_n>0,
\end{align*}
so that, if $\e$ is sufficiently small,
\begin{equation*}
    \gamma=\min\{\gamma_1,\gamma_2\}\geq \gamma_0,
\end{equation*}
where 
\begin{equation*}
    \gamma_0=\frac{1}{2} \min\left\{ \lambda_{n+1}-\lambda_n,\lambda_n-\lambda_{n-1} \right\}
\end{equation*}
is
a positive number independent of $\e$. Hence, with these estimates for $\delta$ and $\gamma$
and denoting as $\Pi_\e$ the orthogonal projection onto 
$\Span\{\varphi_n^\e\}$, i.e. 
\begin{equation}\label{eq:Pi-eps}
    \Pi_\e\colon L^2(\Omega_\e)\to L^2(\Omega_\e),\quad 
    \Pi_\e(v)= (\varphi_n^\e,v)_{L^2(\Omega_\e)}\,\varphi_n^\e,
\end{equation} 
from \Cref{lemma:sm_eig} and \eqref{eq:varphi-n-Ueps} we obtain 
\begin{equation}\label{eq:main1}
    \norm{\varphi_n-U_\e-\Pi_\e(\varphi_n-U_\e)}=\norm{f-\Pi_\e f}\|\varphi_n-U_\e\|\leq \frac{4\sqrt2\lambda_n}{\gamma_0}\,\norm{U_\e}
\end{equation}
and
\begin{equation}\label{eq:main2}
    \abs{\lambda_n^\e-\lambda_n-\xi_\e}\leq \frac{8\lambda_n^2 \norm{U_\e}^2}{\gamma_0}\left( \frac{|\lambda_n^\e-\lambda_n|}{\gamma_0}+1\right),
\end{equation}
for $\e$  sufficiently small, where
\[
    \xi_\e:=q(f,f)=\frac{q(\varphi_n-U_\e,\varphi_n-U_\e)}{\norm{\varphi_n-U_\e}^2}.
\]
At this point, we analyze  what happens asymptotically as $\e\to 0$. 
Bearing in mind that \Cref{lemma:norm2} ensures that $\norm{U_\e}^2=\norm{U_\e}_{L^2(\Omega_\e)}^2=o(\mathcal{T_\e})$ as $\e\to 0$, estimates \eqref{eq:main1} and \eqref{eq:main2} yield
\begin{equation}\label{eq:main3}
    \norm{\varphi_n-U_\e-\Pi_\e(\varphi_n-U_\e)}_{L^2(\Omega_\e)}^2=o(\mathcal{T_\e})
\end{equation}
and
\begin{equation}\label{eq:main4}
    \lambda_n^\e=\lambda_n+\xi_\e+o(\mathcal{T_\e}),
\end{equation}
as $\e\to 0$. We are now ready to establish expansions \eqref{eq:asym_eigenvalues} and \eqref{eq:asym_eigenfunctions}--\eqref{eq:asym_eigenfunctions-2}. 

\smallskip
\noindent
\textbf{Proof of \eqref{eq:asym_eigenvalues}.} We begin by expanding $\xi_\e$ as $\e\to 0$. 
By \eqref{eq:main9} we have 
\begin{equation}\label{eq:main5}
    \xi_\e=q(\varphi_n-U_\e,\varphi_n-U_\e)(1+o(1))\quad\text{as }\e\to 0.
\end{equation}
Furthermore
\begin{align}
    q(\varphi_n-U_\e,\varphi_n-U_\e) =& \int_{\Oe}(\abs{\nabla\varphi_n}^2+\varphi_n^2)\dx -\lambda_n\int_{\Oe}\varphi_n^2\dx \label{eq:main6}\\
    &\notag+\int_{\Oe}(\abs{\nabla U_\e}^2+U_\e^2)\dx-\lambda_n\int_{\Oe}U_\e^2\dx \\
    &\notag-2\left(\int_{\Oe}(\nabla U_\e\nabla\varphi_n+U_\e\varphi_n)\dx-\lambda_n\int_{\Oe}U_\e\varphi_n\dx \right).
\end{align}
Since $\varphi_n$ is an eigenfunction associated to $\lambda_n$ we have \begin{equation*}
    \int_{\Oe}(\abs{\nabla\varphi_n}^2+\varphi_n^2)\dx -\lambda_n\int_{\Oe}\varphi_n^2\dx=-\int_{\Sigma_\e}\left(\abs{\nabla\varphi_n}^2-(\lambda_n-1)\varphi_n^2 \right)\dx.
\end{equation*}
In view of  \Cref{rmk:torsion_norm} and \Cref{lemma:norm2}, the term on the second line of \eqref{eq:main6} satisfies
\begin{equation*}
    \int_{\Oe}(\abs{\nabla U_\e}^2+U_\e^2)\dx-\lambda_n\int_{\Oe}U_\e^2\dx=\mathcal{T}_\e+o(\mathcal{T}_\e)\quad\text{as }\e\to 0.
\end{equation*}
Finally, an integration by parts and \Cref{rmk:torsion_norm} allow us to rewrite  the term on the last line  of \eqref{eq:main6}  as
\begin{equation*}
    \int_{\Oe}(\nabla U_\e\cdot\nabla\varphi_n+U_\e\varphi_n)\dx-\lambda_n\int_{\Oe}U_\e\varphi_n\dx=\int_{\partial\Sigma_\e}U_\e\partial_\nnu\varphi_n\ds=\mathcal{T}_\e.
\end{equation*}
Plugging these identities into \eqref{eq:main5} and \eqref{eq:main4}, we conclude the proof of \eqref{eq:asym_eigenvalues}. 

\smallskip
\noindent
\textbf{Proof of \eqref{eq:asym_eigenfunctions}.} Let $\Pi_\e$ be as in \eqref{eq:Pi-eps}.  We claim that 
        \begin{equation}\label{eq:asym_H1}
            \norm{h_\e-\Pi_\e h_\e}_{H^1(\Omega_\e)}^2=o(\mathcal{T}_\e )\quad\text{as }\e\to 0,
        \end{equation}     
where $h_\e=\varphi_n-U_\e$. We observe that
\begin{equation*}
      \begin{bvp}
            -\Delta ( h_\e-\Pi_\e h_\e)+( h_\e-\Pi_\e h_\e)&=\lambda_n^\e( h_\e-\Pi_\e h_\e)+\lambda_n U_\e+(\lambda_n-\lambda_n^\e) h_\e, &&\text{in }\Omega_\e, \\
            \partial_\nnu( h_\e-\Pi_\e h_\e)&=0,&&\text{on }\partial\Omega_\e,
        \end{bvp}
    \end{equation*}
    in a weak sense. By testing the above equation  with $h_\e-\Pi_\e h_\e$ itself, we obtain
    \begin{multline}\label{eq:asym_H1_1}
        \norm{h_\e-\Pi_\e h_\e}_{H^1(\Omega_\e)}^2=\lambda_n^\e\norm{ h_\e-\Pi_\e h_\e}_{L^2(\Omega_\e)}^2+\lambda_n(U_\e, h_\e-\Pi_\e h_\e)_{L^2(\Omega_\e)}\\
        +(\lambda_n-\lambda_n^\e)( h_\e, h_\e-\Pi_\e h_\e)_{L^2(\Omega_\e)}.
    \end{multline}
    We are going to estimate each of the three terms on the right-hand side. Concerning the first one, thanks to \eqref{eq:main3} and \eqref{eq:conv-eigenvalues}, we have
    \begin{equation*}
        \lambda_n^\e\norm{h_\e-\Pi_\e h_\e}_{L^2(\Omega_\e)}^2=o(\mathcal{T}_\e)\quad\text{as }\e\to 0.
    \end{equation*}
    To estimate  the second term on the right hand side of \eqref{eq:asym_H1_1},  we use the Cauchy-Schwarz inequality, \Cref{lemma:norm2} and \eqref{eq:main3}. This leads to 
    \begin{equation*}
        \lambda_n(U_\e, h_\e-\Pi_\e h_\e)_{L^2(\Omega_\e)}=o(\mathcal{T}_\e)\quad\text{as }\e\to 0.
    \end{equation*}
    As far as the third term is concerned, we preliminarily observe that, by \Cref{lemma:equiv},
    \begin{equation*}
        \mathcal{T}_\e=\sup_{u\in H^1(\Omega_\e)\setminus\{0\}}\frac{\displaystyle\left(\int_{\partial\Sigma_\e}u\partial_\nnu\varphi_n\ds\right)^2}{\displaystyle\int_{\Omega_\e}(\abs{\nabla u}^2+u^2)\dx}\geq \frac{\displaystyle\left(\int_{\partial\Sigma_\e}
        \varphi_n\partial_\nnu\varphi_n\ds\right)^2}{\lambda_n},
    \end{equation*}
    which, by an integration by parts, implies that
       \begin{equation}\label{eq:st-pr-b}
    \left|\int_{\Sigma_\e}\left(\abs{\nabla\varphi_n}^2-(\lambda_n-1)\varphi_n^2 \right)\dx\right|=    
\abs{\int_{\partial\Sigma_\e}\varphi_n\partial_\nnu\varphi_n\ds}=O(\sqrt{\mathcal{T}_\e})\quad\text{as }\e\to 0.
    \end{equation}
Combining \eqref{eq:asym_eigenvalues} and \eqref{eq:st-pr-b} we obtain the rough estimate
    \begin{equation}\label{eq:st-rough}
        \lambda_n^\e-\lambda_n=O(\sqrt{\mathcal{T}_\e})\quad\text{as }\e\to 0.
    \end{equation}
The last term in \eqref{eq:asym_H1_1} can be estimated using  \eqref{eq:st-rough}, the Cauchy-Schwarz inequality, \eqref{eq:main9} and \eqref{eq:main3},
thus obtaining 
\[
        (\lambda_n-\lambda_n^\e)(h_\e, h_\e-\Pi_\e h_\e)_{L^2(\Omega_\e)}=o(\mathcal{T}_\e)\quad\text{as }\e\to 0.
    \]
    This concludes the proof of \eqref{eq:asym_H1}. 
    
By the triangle inequality, Lemma \ref{lemma:norm2}, and \eqref{eq:asym_H1}, we have
    \begin{equation}\label{eq:main10}
        \norm{\Pi_\e h_\e-\varphi_n}_{L^2(\Omega_\e)}\leq \norm{h_\e-\varphi_n}_{L^2(\Omega_\e)}+\norm{\Pi_\e h_\e-h_\e}_{L^2(\Omega_\e)}=o(\sqrt{\mathcal{T}_\e})\quad\text{as }\e\to 0,
    \end{equation}
which implies that
\begin{align}\label{eq:main11}
        \norm{\Pi_\e h_\e}_{L^2(\Omega_\e)}&=\left(1-\|\varphi_n\|_{L^2(\Sigma_\e)}^2+o(\sqrt{\mathcal{T}_\e})\right)^{1/2}\\
        &\notag=1-\frac12 \|\varphi_n\|_{L^2(\Sigma_\e)}^2+o(\sqrt{\mathcal{T}_\e})+o(\|\varphi_n\|_{L^2(\Sigma_\e)}^2)
        \quad\text{as }\e\to 0.
    \end{align}
    From \eqref{eq:main10} and the fact that $\lim_{\e\to0}\norm{\varphi_n}_{H^1(\Sigma_\e)}= 0$
    we also deduce that
    \[
        \int_{\Omega_\e}\varphi_n\,\Pi_\e h_\e\dx=1+o(1)\quad\text{as }\e\to 0,
    \]
    which, combined with \eqref{eq:main11}, implies that
    \begin{equation*}
        \int_{\Omega_\e}\varphi_n\,\frac{\Pi_\e h_\e}{\norm{\Pi_\e h_\e}_{L^2(\Omega_\e)}}\dx>0
    \end{equation*}
    for $\e$ sufficiently small. Hence, since $\varphi_n^\e\in H^1(\Omega_\e)$ is uniquely determined by the condition above, see \eqref{eq:choose-phi-eps-1}, then necessarily
    \[
        \varphi_n^\e=\frac{\Pi_\e h_\e}{\norm{\Pi_\e h_\e}_{L^2(\Omega_\e)}},
    \]
    for $\e$ sufficiently small. We finally observe that
    \begin{align*}
&        \|\varphi_n^\e-\varphi_n+U_\e\|_{H^1(\Omega_\e)}^2= \frac{1}{\norm{\Pi_\e h_\e}_{L^2(\Omega_\e)}^2}\norm{\Pi_\e h_\e-\norm{\Pi_\e h_\e}_{L^2(\Omega_\e)}\varphi_n+\norm{\Pi_\e h_\e}_{L^2(\Omega_\e)}U_\e}_{H^1(\Omega_\e)}^2 \\
        &\quad=  \frac{1}{\norm{\Pi_\e h_\e}_{L^2(\Omega_\e)}^2}\Big\lVert\Pi_\e h_\e-h_\e+\left(1-\norm{\Pi_\e h_\e}_{L^2(\Omega_\e)}\right)\varphi_n 
        +\left(\norm{\Pi_\e h_\e}_{L^2(\Omega_\e)}-1\right)U_\e\Big\rVert_{H^1(\Omega_\e)}^2.
    \end{align*}
    By the previous identity, \eqref{eq:asym_H1} and \eqref{eq:main11}  we obtain \eqref{eq:asym_eigenfunctions}. 
    To prove \eqref{eq:asym_eigenfunctions-2} we observe that, since $\norm{U_\e}_{H^1(\Omega_\e)}^2=\mathcal{T}_\e$, see Remark \ref{rmk:torsion_norm},
    \begin{align*}
   &(U_\e,\varphi_n^\e-\varphi_n+U_\e)_{H^1(\Omega_\e)}\\
   &=   
   \frac{1}{\norm{\Pi_\e h_\e}_{L^2(\Omega_\e)}}
   \Big(U_\e,\Pi_\e h_\e-h_\e+\left(1-\norm{\Pi_\e h_\e}_{L^2(\Omega_\e)}\right)\varphi_n 
        +\left(\norm{\Pi_\e h_\e}_{L^2(\Omega_\e)}-1\right)U_\e\Big)_{H^1(\Omega_\e)}\\
        &=\frac{\norm{\Pi_\e h_\e}_{L^2(\Omega_\e)}-1}{\norm{\Pi_\e h_\e}_{L^2(\Omega_\e)}}
   \mathcal T_\e+\frac{(U_\e,\Pi_\e h_\e-h_\e)_{H^1(\Omega_\e)}}{\norm{\Pi_\e h_\e}_{L^2(\Omega_\e)}}+\frac{1-\norm{\Pi_\e h_\e}_{L^2(\Omega_\e)}}{\norm{\Pi_\e h_\e}_{L^2(\Omega_\e)}}(U_\e,\varphi_n)_{H^1(\Omega_\e)},
    \end{align*}
    hence, in view of \eqref{eq:main11} and \eqref{eq:asym_H1},
    \begin{equation}\label{eq:stimanorme2}
        (U_\e,\varphi_n^\e-\varphi_n+U_\e)_{H^1(\Omega_\e)}=o(\mathcal T_\e)+
        O\left(\|\varphi_n\|_{L^2(\Sigma_\e)}^2\sqrt{\mathcal{T}_\e}\right)
    \end{equation}
    as $\e\to0$. Writing $\|\varphi_n^\e-\varphi_n\|_{H^1(\Omega_\e)}^2$ as 
\begin{equation*}
    \|\varphi_n^\e-\varphi_n\|_{H^1(\Omega_\e)}^2=
    \|U_\e\|_{H^1(\Omega_\e)}^2+
    \|\varphi_n^\e-\varphi_n+U_\e\|_{H^1(\Omega_\e)}^2-2(U_\e,\varphi_n^\e-\varphi_n+U_\e)_{H^1(\Omega_\e)},
\end{equation*}
    estimate \eqref{eq:asym_eigenfunctions-2} follows from \eqref{eq:asym_eigenfunctions}, \eqref{eq:stimanorme2}, and the fact that $\norm{U_\e}_{H^1(\Omega_\e)}^2=\mathcal{T}_\e$.
\end{proof}


\section{Blow-up analysis}

\noindent
In the present section, we focus on a particular choice of holes $\Sigma_\e$. More precisely, let 
\begin{equation}\label{eq:ipo-blow}
N\geq3,\quad  x_0\in\Omega,\quad\text{and}\quad\Sigma\text{ be a bounded open Lipschitz set
such that  \eqref{eq:inOmega} is satisfied}    
\end{equation}
 for some $\e_0,r_0>0$. Then, for every $\e\in(0,\e_0)$, we consider the hole $\Sigma_\e:=x_0+\e\Sigma$ as in \eqref{eq:hole-shrinking} and the corresponding perforated domain 
\begin{equation}\label{eq:ipo-blow2}
\Omega_\e=\Omega\setminus\overline{\Sigma_\e}=\Omega\setminus(x_0+\e\overline{\Sigma}).
\end{equation}
Without loss of generality, we can assume that $x_0=0$. We observe that the family $\{\Sigma_\e\}_{\e\in(0,\e_0)}$ defined as above satisfies assumption {\bf (H)}. Indeed, \eqref{eq:ass_sigma_1} and \eqref{eq:ass_sigma_4} follow directly from the  definition of $\Sigma_\e$ and \eqref{eq:inOmega}.  Condition \eqref{eq:ass_sigma_3} is, instead, a consequence of Lemma \ref{lemma:ext} in the Appendix.

The local behaviour of the eigenfunction $\varphi_n$ near $0$ is described  in the following proposition.
\begin{proposition}\label{prop:asympt_phi}
If ${\varphi_n}$ vanishes of order $k\geq 0$ at $0$,
then, for every $R>0$,
\[
\frac{{\varphi_n}(rx)}{r^k}\to P_k^{\varphi_n}(x)\ \text{uniformly in } \overline{B_R} \text{ and in }H^1(B_R)
\]
as $r\to 0$; furthermore, $P_k^{\varphi_n}$ is a harmonic polynomial, 
homogeneous of degree $k$. 

If $\varphi_n-\varphi_n(0)$ vanishes of order $k\geq 1$ at $0$, 
then, for every $R>0$,
\[
        \frac{\varphi_n(r x)-\varphi_n(0)}{r^k}\to P_{k}^{\varphi_n}(x)\quad\text{uniformly in } \overline{B_R} \text{ and in }H^1(B_R)
\]
and
\[
\frac{\nabla{\varphi_n}(rx)}{r^{k-1}}
\to \nabla P_{k}^{\varphi_n}(x) \quad\text{uniformly in } \overline{B_R} \text{ and in }H^1\big(B_R; \R^N\big)
\]
as $r\to 0$.
\end{proposition}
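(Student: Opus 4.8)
The plan is to exploit that $\varphi_n$ solves $-\Delta\varphi_n=(\lambda_n-1)\varphi_n$ in the ball $B_{r_0}\subset\Omega$ centred at $0$, hence is real-analytic there; in particular $\varphi_n\in C^\infty(B_{r_0})$, and, since it has finite vanishing order $k$ at $0$ (as recalled after Definition~\ref{def:vanish}), its Taylor expansion at $0$ reads $\varphi_n(y)=\sum_{i\ge k}P_i^{\varphi_n}(y)$, with $P_k^{\varphi_n}\not\equiv0$ and each $P_i^{\varphi_n}$ homogeneous of degree $i$ as in \eqref{eq:polinomi}.

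First I would show that $P_k^{\varphi_n}$ is harmonic. For $k\le1$ this is clear, since $P_k^{\varphi_n}$ is then constant or linear. For $k\ge2$, I would compare the Taylor expansions at $0$ of the two sides of $\Delta\varphi_n=-(\lambda_n-1)\varphi_n$: the expansion of $\Delta\varphi_n$ is obtained by applying $\Delta$ termwise to that of $\varphi_n$, so its homogeneous component of degree $k-2$ equals $\Delta P_k^{\varphi_n}$; on the other side, the expansion of $-(\lambda_n-1)\varphi_n$ starts at degree $k$, so its degree-$(k-2)$ component is $0$. By uniqueness of Taylor coefficients, $\Delta P_k^{\varphi_n}=0$; homogeneity of degree $k$ and non-triviality are built into the notion of vanishing order.

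Next, I would establish the convergence statements by rescaling. Fix $R>0$. Taylor's theorem, together with the vanishing of the lower-order terms, gives $\varphi_n(y)=P_k^{\varphi_n}(y)+O(|y|^{k+1})$ and $\nabla\varphi_n(y)=\nabla P_k^{\varphi_n}(y)+O(|y|^{k})$ as $|y|\to0$. Substituting $y=rx$ with $x\in\overline{B_R}$ and using the homogeneities $P_k^{\varphi_n}(rx)=r^{k}P_k^{\varphi_n}(x)$ and $\nabla P_k^{\varphi_n}(rx)=r^{k-1}\nabla P_k^{\varphi_n}(x)$, one gets
\[
\frac{\varphi_n(rx)}{r^{k}}=P_k^{\varphi_n}(x)+O(r)
\qquad\text{and}\qquad
\frac{\nabla\varphi_n(rx)}{r^{k-1}}=\nabla P_k^{\varphi_n}(x)+O(r)
\]
as $r\to0$, uniformly for $x\in\overline{B_R}$ (with constants depending on $R$). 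Hence $x\mapsto\varphi_n(rx)/r^{k}$ converges to $P_k^{\varphi_n}$ and $x\mapsto\nabla\varphi_n(rx)/r^{k-1}$ (which is the gradient of the former) to $\nabla P_k^{\varphi_n}$, uniformly on $\overline{B_R}$; since $B_R$ is bounded, this upgrades to convergence in $H^1(B_R)$. For the second part I would run the same computation with $\varphi_n$ replaced by $\varphi_n-\varphi_n(0)$, which has the same gradient and, for $k\ge1$, the same degree-$k$ Taylor polynomial $P_k^{\varphi_n}$; the gradient convergence is then obtained along the way.

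\textbf{Main obstacle.} The argument is essentially routine. The only points that need a little care are the degree bookkeeping in the Taylor-coefficient comparison used for harmonicity, and the reliance on genuine analyticity of $\varphi_n$ — rather than mere smoothness — to guarantee that the vanishing order $k$ is finite, which we take from the discussion preceding the statement.
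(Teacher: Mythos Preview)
Your proposal is correct and follows essentially the same approach as the paper's own proof, which simply invokes analyticity of $\varphi_n$ for the convergences and ``standard scaling arguments, together with the fact that $\varphi_n$ is an eigenfunction'' for the harmonicity of $P_k^{\varphi_n}$; you have just made these two steps explicit. The only point you might state more carefully is the $H^1(B_R;\R^N)$ convergence of the rescaled gradient, which requires the analogous Taylor estimate for the Hessian---but this follows from the same argument you already gave, applied one derivative higher.
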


\begin{proof}
  The proof of the convergences follows from the analyticity of $\varphi_n$. Moreover, the fact that $P^{\varphi_n}_k$ is
  harmonic follows from standard scaling arguments, together with
  the fact that ${\varphi_n}$ is an eigenfunction.
\end{proof}

\begin{remark}\label{rem:k12}
    It is obvious that
 $\varphi_n$ vanishes at $0$ of order
$k\geq 1$ if and only if  $\varphi_n(0)=0$; in such a case, $\varphi_n-\varphi_n(0)$ vanishes of the same order
$ k $.
On the other hand, 
if $\varphi_n(0)\neq 0$, the vanishing order of $\varphi_n-\varphi_n(0)$ 
is necessarily equal to either $k=1$ or $k=2$; this can be easily verified by taking into account that ${\varphi_n}$ is
a solution to \eqref{eq:P0} (which is  not constant since $n\geq1$) and   comparing the Taylor expansions of $-\Delta\varphi_n+\varphi_n$ and $\lambda_n\varphi_n$.
In the case $\varphi_n(0)\neq0$ and $k=2$, $0$ is a critical point for the function
${\varphi_n}$, whereas, if $\varphi_n(0)\neq0$ and $k=1$, $0$ is a regular point outside the nodal set. 
\end{remark}

The following Hardy-type inequality on perforated balls will be crucial to identify the limit blow-up profiles.

\begin{lemma}[Hardy-type inequality]\label{lemma:hardy}
	Let $N\geq 3$ and $\Sigma\sub\R^N$ be an open,  Lipschitz set such that $\overline\Sigma\subset B_{R_0}$ 
    and $B_{R_0}\setminus\overline{\Sigma}$ is connected
    for some $R_0>0$. There exists $C_{\textup{H}}>0$, depending only on $N$ and $\Sigma$, such that
	\begin{equation}\label{eq:hardy_balls}
		\int_{B_R\setminus\Sigma}\frac{u^2}{\abs{x}^2}\dx\leq C_{\textup{H}}\left[\int_{B_R\setminus\Sigma}\abs{\nabla u}^2\dx+\frac{1}{R^2}\int_{B_R\setminus\Sigma}u^2\dx\right]
	\end{equation}
	for all $u\in H^1(B_R\setminus\overline{\Sigma})$ and $R>2R_0$. Moreover,
	\begin{equation}\label{eq:hardy_RN}	
		\int_{\R^N\setminus\Sigma}\frac{u^2}{\abs{x}^2}\dx \leq C_{\textup{H}}\int_{\R^N\setminus\Sigma}\abs{\nabla u}^2\dx	
	\end{equation}
	for all $u\in C_c^\infty(\R^N\setminus\Sigma)$.
\end{lemma}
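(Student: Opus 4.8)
The plan is to derive both inequalities from the classical Hardy inequality in $\R^N$, namely $\int_{\R^N}|x|^{-2}w^2\dx\le\frac{4}{(N-2)^2}\int_{\R^N}|\nabla w|^2\dx$ for all $w\in C_c^\infty(\R^N)$ (which holds precisely because $N\ge3$), combined with a single rescaled cut‑off at the scale $R$; no blow‑up or compactness argument is needed.

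The first step is to establish \eqref{eq:hardy_RN}. Given $u\in C_c^\infty(\R^N\setminus\overline\Sigma)$, extend it by zero inside $\overline\Sigma$; since $u$ vanishes near $\overline\Sigma$, the extension lies in $C_c^\infty(\R^N)$, and classical Hardy applied to it yields \eqref{eq:hardy_RN} with $C_{\textup H}=\tfrac{4}{(N-2)^2}$ (depending only on $N$). By density of $C_c^\infty(\R^N\setminus\overline\Sigma)$ in $\mathcal D^{1,2}(\R^N\setminus\Sigma)$ and continuity of both sides of \eqref{eq:hardy_RN} with respect to the $\mathcal D^{1,2}$‑norm, the inequality extends to all of $\mathcal D^{1,2}(\R^N\setminus\Sigma)$.

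For \eqref{eq:hardy_balls}, fix $R>2R_0$ and $u\in H^1(B_R\setminus\overline\Sigma)$. Pick $\chi\in C_c^\infty(B_{3/4})$ with $0\le\chi\le1$ and $\chi\equiv1$ on $B_{1/2}$, and set $\chi_R:=\chi(\cdot/R)$, so that $\chi_R\equiv1$ on $B_{R/2}$, $\supp\chi_R\subset B_{3R/4}$ and $|\nabla\chi_R|\le C_\chi/R$. Let $v:=\chi_R u$, extended by zero outside $B_R$: note that $v$ is \emph{not} extended inside the hole, so no condition on its trace on $\partial\Sigma$ is involved; it is compactly supported and belongs to $H^1(\R^N\setminus\overline\Sigma)$, hence — by Sobolev's embedding and the characterization of $\mathcal D^{1,2}(\R^N\setminus\Sigma)$ recalled after \Cref{def:beppo-levi} — to $\mathcal D^{1,2}(\R^N\setminus\Sigma)$. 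Applying \eqref{eq:hardy_RN} to $v$ and using $|\nabla v|^2\le2\chi_R^2|\nabla u|^2+2|\nabla\chi_R|^2u^2$ together with $\supp\chi_R\subset B_R$, one gets
\begin{multline*}
\int_{B_{R/2}\setminus\Sigma}\frac{u^2}{|x|^2}\dx
\le\int_{\R^N\setminus\Sigma}\frac{v^2}{|x|^2}\dx
\le C_{\textup H}\int_{\R^N\setminus\Sigma}|\nabla v|^2\dx\\
\le2C_{\textup H}\int_{B_R\setminus\Sigma}|\nabla u|^2\dx
+\frac{2C_{\textup H}C_\chi^2}{R^2}\int_{B_R\setminus\Sigma}u^2\dx.
\end{multline*}
On the remaining region $\{R/2\le|x|<R\}$ one simply has $|x|^{-2}\le4R^{-2}$, whence $\int_{\{R/2<|x|<R\}}|x|^{-2}u^2\dx\le 4R^{-2}\int_{B_R\setminus\Sigma}u^2\dx$. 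Adding the two estimates gives \eqref{eq:hardy_balls} with a constant depending only on $N$.

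The only point that requires care — and the only place where the hypothesis that $B_{R_0}\setminus\overline\Sigma$ is connected (equivalently, that $\R^N\setminus\overline\Sigma$ has no bounded connected component) is actually used — is the membership $v\in\mathcal D^{1,2}(\R^N\setminus\Sigma)$: without connectedness the identification of $\mathcal D^{1,2}(\R^N\setminus\Sigma)$ with $\{u\in L^{2N/(N-2)}(\R^N\setminus\Sigma):\nabla u\in L^2(\R^N\setminus\Sigma)\}$ breaks down, and in fact \eqref{eq:hardy_balls} itself fails (a function that is a nonzero constant on a bounded component of $\R^N\setminus\overline\Sigma$ lying away from the origin violates it as $R\to\infty$). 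Everything else is the product rule and the classical Hardy inequality.
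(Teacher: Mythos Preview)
There is a genuine gap in your derivation of \eqref{eq:hardy_RN}. You prove it first for $u\in C_c^\infty(\R^N\setminus\overline\Sigma)$ via extension by zero, and then claim density of $C_c^\infty(\R^N\setminus\overline\Sigma)$ in $\mathcal D^{1,2}(\R^N\setminus\Sigma)$. That density is false: the trace map from $\mathcal D^{1,2}(\R^N\setminus\Sigma)$ to $L^2(\partial\Sigma)$ is continuous (restrict to a fixed ball $B\supset\overline\Sigma$, use Sobolev to control $\|u\|_{L^2(B\setminus\overline\Sigma)}$, then the usual trace theorem), so the $\mathcal D^{1,2}$-closure of functions vanishing near $\partial\Sigma$ is contained in the zero-trace subspace. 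This is strictly smaller than $\mathcal D^{1,2}(\R^N\setminus\Sigma)$; for instance, the torsion function $\tilde U_{\Sigma,f}$ of \Cref{prop:torsion_functions} has nonzero trace whenever $f\not\equiv0$. As a consequence, your inequality \eqref{eq:hardy_RN} is only established for zero-trace functions, yet in the proof of \eqref{eq:hardy_balls} you apply it to $v=\chi_R u$, which has the same (generically nonzero) trace on $\partial\Sigma$ as $u$.

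This is precisely the difficulty the paper's argument is designed to overcome, and it is where the Lipschitz regularity of $\Sigma$ is really used. The paper proves \eqref{eq:hardy_balls} first: it rescales $u$ to $u_R(x)=u(Rx)\in H^1(B_1\setminus\frac1R\overline\Sigma)$, extends $u_R$ \emph{inside} the small hole $\frac1R\Sigma$ via the uniform extension operator of \Cref{lemma:ext} (bounded independently of $R$), applies the classical Hardy inequality on the full ball $B_1$, and scales back. The rescaling is essential: a direct extension of $v=\chi_R u$ inside $\Sigma$ would produce $\|v\|_{H^1}^2$ on the right-hand side rather than $\|\nabla v\|_{L^2}^2$, giving $\int u^2$ without the crucial $R^{-2}$ factor. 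Inequality \eqref{eq:hardy_RN} is then obtained by letting $R\to\infty$ in \eqref{eq:hardy_balls}. Your cut-off idea can be salvaged, but only after such an extension-and-scaling step; the ``extend by zero'' shortcut does not survive the Neumann (no-trace) setting of this lemma.
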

Inequality \eqref{eq:hardy_RN} allows us to characterize
the space $\mathcal D^{1,2}(\R^N\setminus\Sigma)$ introduced in Definition \ref{def:beppo-levi} as
	\begin{equation}\label{eq:D12}
		\mathcal{D}^{1,2}(\R^N\setminus\Sigma)=\left\{u\in L^1_{\textup{loc}}(\R^N\setminus\Sigma)\colon \int_{\R^N\setminus\Sigma} \bigg(\abs{\nabla u}^2+\frac{u^2}{\abs{x}^2}\bigg)\dx<\infty \right\}.
	\end{equation}
	Furthermore,
	\[
		u\mapsto \bigg(\int_{\R^N\setminus\Sigma} \bigg(\abs{\nabla u}^2+\frac{u^2}{\abs{x}^2}\bigg)\dx\bigg)^{\frac{1}{2}}
	\]
	is an equivalent norm on $\mathcal{D}^{1,2}(\R^N\setminus\Sigma)$.

\begin{proof}[Proof of Lemma \ref{lemma:hardy}]
Let $R>2R_0$ and $u \in H^1(B_R \setminus \overline{\Sigma})$.
We define the scaled function
\[
u_R(x) := u(Rx) \in H^1\big(B_1 \setminus \tfrac1 R \overline{\Sigma} \big),
\]
as well as its extension to the whole $B_1$
\[
v_R := \mathsf{E}_{\frac 1 R} u_R \in H^1(B_1).
\]
Lemma \ref{lemma:ext}  ensures that the norm of the 
extension operator $\mathsf{E}_{\frac 1 R}$ does not depend on $R$.
Moreover
\begin{equation}
\label{eq:hardyv}
\int_{B_1}\frac{v_R^2}{\abs{x}^2}\dx
\leq C_N\left(\int_{B_1}\abs{\nabla v_R}^2\dx+\int_{B_1}v_R^2\dx\right),
\end{equation}
for some constant $C_N>0$ depending only on $N$.
The above Hardy-type inequality is classical, see, for instance,
\cite[Lemma 6.7]{FNO1} for a proof in half-balls.  In view of
\eqref{eq:hardyv}, we have
\begin{equation*}
\int_{B_1 \setminus \tfrac1 R \Sigma}\frac{v_R^2}{\abs{x}^2}\dx
\leq C_N\mathfrak{C}^2\left(\int_{B_1 \setminus \tfrac1 R \Sigma}\abs{\nabla v_R}^2\dx
+\int_{B_1 \setminus \tfrac1 R \Sigma}v_R^2\dx\right),
\end{equation*}
 with $\mathfrak{C}$ being as in Lemma
\ref{lemma:ext} with $\Omega=B_1$, $\e_0=\frac1{2R_0}$, and $r_0=\frac12$. Being $v_R$ the extension of $u_R$, 
the above inequality holds for $u_R$ as well.  Scaling back the inequality to $B_R$ yields
\begin{equation*}
R^{2-N}\int_{B_R \setminus \Sigma}\frac{u ^2 (x)}{\abs{x}^2}\dx
\leq C_N\mathfrak{C}^2 R^{-N}\left(\int_{B_R \setminus \Sigma}R^2\abs{\nabla u(x)}^2\dx
+\int_{B_R \setminus \Sigma}u ^2(x)\dx\right),
\end{equation*}
which, after a straightforward simplification, is precisely \eqref{eq:hardy_balls}.
Inequality \eqref{eq:hardy_RN} follows from \eqref{eq:hardy_balls} by letting $R\to\infty$. 
\end{proof}

The following result provides a first rough estimate of $\mathcal{T}_{\overline{\Omega}_\e}(\partial(\e\Sigma),\partial_{\nnu}\varphi_n)$.
\begin{lemma}\label{lemma:big_O}
Under assumptions \eqref{eq:ipo-blow}--\eqref{eq:ipo-blow2} with $x_0=0$,
 let $k\geq 1$ be the vanishing order of $\varphi_n-\varphi_n(0)$ at $0$. Then
	\begin{equation*}
		\mathcal{T}_{\overline{\Omega}_\e}(\partial(\e\Sigma),\partial_{\nnu}\varphi_n)=O(\e^{N+2k-2})\quad\text{as }\e\to 0.
	\end{equation*}
\end{lemma}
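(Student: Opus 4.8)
The plan is to estimate $\mathcal{T}_{\overline{\Omega}_\e}(\partial(\e\Sigma),\partial_\nnu\varphi_n)$ from above through the variational characterization of \Cref{lemma:equiv}. Writing $\Sigma_\e=\e\Sigma$ as in \eqref{eq:ipo-blow2}, that characterization reads
\[
\mathcal{T}_{\overline{\Omega}_\e}(\partial(\e\Sigma),\partial_\nnu\varphi_n)=\sup_{u\in H^1(\Omega_\e)\setminus\{0\}}\frac{\bigl(\int_{\partial\Sigma_\e}u\,\partial_\nnu\varphi_n\ds\bigr)^2}{\norm{u}_{H^1(\Omega_\e)}^2},
\]
so it suffices to exhibit a constant $C>0$, independent of $\e$, such that
\[
\Bigl|\int_{\partial\Sigma_\e}u\,\partial_\nnu\varphi_n\ds\Bigr|\le C\,\e^{(N+2k-2)/2}\,\norm{u}_{H^1(\Omega_\e)}\qquad\text{for all }u\in H^1(\Omega_\e),
\]
and all sufficiently small $\e$.

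Fix such a $u$ and set $\tilde u:=\Ee u\in H^1(\Omega)$, so that $\norm{\tilde u}_{H^1(\Omega)}\le\mathfrak{C}\norm{u}_{H^1(\Omega_\e)}$ by \eqref{eq:ass_sigma_3}. Since $\tilde u$ and $u$ have the same trace on $\partial\Sigma_\e$ and $\Delta\varphi_n=(1-\lambda_n)\varphi_n$ in $\Omega$, the Divergence Theorem (as in the proof of \Cref{cor:torsion-to-zero}) gives
\[
\int_{\partial\Sigma_\e}u\,\partial_\nnu\varphi_n\ds=\int_{\Sigma_\e}\dive\bigl(\tilde u\,\nabla\varphi_n\bigr)\dx=(1-\lambda_n)\int_{\Sigma_\e}\tilde u\,\varphi_n\dx+\int_{\Sigma_\e}\nabla\tilde u\cdot\nabla\varphi_n\dx,
\]
and I would bound the two terms separately using the local behaviour of $\varphi_n$ at $0$ from \Cref{prop:asympt_phi}. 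Applied to $\varphi_n-\varphi_n(0)$, which vanishes of order $k\ge1$ at $0$, that proposition yields $\norm{\nabla\varphi_n}_{L^\infty(\Sigma_\e)}\le C\e^{k-1}$ for small $\e$; together with $\abs{\Sigma_\e}=\e^N\abs{\Sigma}$, the Cauchy--Schwarz inequality and $\norm{\nabla\tilde u}_{L^2(\Sigma_\e)}\le\mathfrak{C}\norm{u}_{H^1(\Omega_\e)}$, this controls the gradient term by $C\e^{k-1}\e^{N/2}\norm{u}_{H^1(\Omega_\e)}=C\e^{(N+2k-2)/2}\norm{u}_{H^1(\Omega_\e)}$, exactly the required order.

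For the zeroth-order term, let $k'\ge0$ be the vanishing order of $\varphi_n$ itself at $0$, so $\norm{\varphi_n}_{L^\infty(\Sigma_\e)}\le C\e^{k'}$ by \Cref{prop:asympt_phi}, and --- this is the decisive point --- note that Hölder's inequality and the Sobolev embedding $H^1(\Omega)\hookrightarrow L^{2N/(N-2)}(\Omega)$ (where $N\ge3$ enters) give
\[
\int_{\Sigma_\e}\abs{\tilde u}\dx\le\abs{\Sigma_\e}^{\frac{N+2}{2N}}\norm{\tilde u}_{L^{2N/(N-2)}(\Sigma_\e)}\le C\,\e^{(N+2)/2}\norm{\tilde u}_{H^1(\Omega)}\le C\,\e^{(N+2)/2}\norm{u}_{H^1(\Omega_\e)}.
\]
Hence $\bigl|(1-\lambda_n)\int_{\Sigma_\e}\tilde u\,\varphi_n\dx\bigr|\le C\,\e^{(N+2k'+2)/2}\norm{u}_{H^1(\Omega_\e)}$. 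By \Cref{rem:k12}, either $\varphi_n(0)=0$ and then $k'=k$, or $\varphi_n(0)\neq0$ and then $k'=0$ with $k\le2$; in either case $k'\ge k-2$, so $\e^{(N+2k'+2)/2}\le C\e^{(N+2k-2)/2}$ for small $\e$. Combining this with the estimate of the gradient term gives the bound on $\int_{\partial\Sigma_\e}u\,\partial_\nnu\varphi_n\ds$ displayed above, and hence the lemma.

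The step I expect to be the main obstacle is precisely this zeroth-order term in the borderline case $\varphi_n(0)\neq0$ with $0$ a critical point of $\varphi_n$, i.e. $k=2$ and $k'=0$: here the crude bound $\int_{\Sigma_\e}\abs{\tilde u}\dx\le\abs{\Sigma_\e}^{1/2}\norm{\tilde u}_{L^2(\Sigma_\e)}\le C\e^{N/2}\norm{u}_{H^1(\Omega_\e)}$ only yields the rate $O(\e^N)$, whereas $N+2k-2=N+2$ is needed. The missing power of $\e$ is recovered by replacing $L^2$ with $L^{2N/(N-2)}$ via Sobolev's embedding --- equivalently, by applying a Hardy-type inequality on a fixed ball containing all the sets $\Sigma_\e$ --- which is exactly where $N\ge3$ is used.
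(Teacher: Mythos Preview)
Your proof is correct and follows essentially the same approach as the paper: the variational characterization from \Cref{lemma:equiv}, the Divergence Theorem to pass to a volume integral over $\Sigma_\e$, the uniform extension, and the Sobolev embedding $H^1(\Omega)\hookrightarrow L^{2^*}(\Omega)$ to gain the extra power of $\e$ in the lower-order term. The only cosmetic difference is that the paper keeps $\Delta\varphi_n$ rather than substituting $(1-\lambda_n)\varphi_n$, and estimates it directly via $\Delta\varphi_n(x)=O(|x|^{k-2})$, which gives the bound uniformly in all cases without the detour through \Cref{rem:k12} and the separate vanishing order $k'$.
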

\begin{proof}
 For every $u\in H^1(\Omega_\e)$, by the Divergence Theorem,  H\"older's inequality, and Lemma \ref{lemma:ext} we have
 \begin{align*}
     \bigg|\int_{\partial(\e\Sigma)}u\,\partial_{\nnu}\varphi_n\ds\bigg|
     &=\bigg|\int_{\e\Sigma}\mathop{\rm div}((\Ee u)\nabla \varphi_n)\dx\bigg|
     =\bigg|\int_{\e\Sigma}\Big((\Delta \varphi_n)(\Ee u)+\nabla(\Ee u)\cdot \nabla \varphi_n\Big)\dx\bigg|\\
     &\leq \|\Delta\varphi_n\|_{L^2(\e\Sigma)}
     \|\Ee u\|_{L^{2^*}(\e\Sigma)}|\e\Sigma|^{1/N}+
     \|\nabla\varphi_n\|_{L^2(\e\Sigma;\R^N)}
     \|\nabla(\Ee u)\|_{L^{2}(\e\Sigma;\R^N)}\\
     &\leq \|\Ee u\|_{H^1(\Omega)} \left(S_{N,\Omega}\|\Delta\varphi_n\|_{L^2(\e\Sigma)}\e |\Sigma|^{1/N}+
     \|\nabla\varphi_n\|_{L^2(\e\Sigma;\R^N)}\right)\\
     &\leq \mathfrak{C} \|u\|_{H^1(\Omega_\e)} \left(S_{N,\Omega}\|\Delta\varphi_n\|_{L^2(\e\Sigma)}\e |\Sigma|^{1/N}+
     \|\nabla\varphi_n\|_{L^2(\e\Sigma;\R^N)}\right)
 \end{align*}
 where $2^*=\frac{2N}{N-2}$ is the critical Sobolev exponent (remember that in the present section we are assuming $N\geq3$) and $S_{N,\Omega}$ is the operator norm of the embedding $H^1(\Omega)\hookrightarrow L^{2^*}(\Omega)$.
 	In view of  the characterization of $\mathcal{T}_{\overline{\Omega}_\e}(\partial(\e\Sigma),\partial_{\nnu}\varphi_n)$ given in  \eqref{eq:torsion_sup}, the above estimate yields
	\begin{align}\label{eq:rough1}
		\mathcal{T}_{\overline{\Omega}_\e}(\partial(\e\Sigma),\partial_{\nnu}\varphi_n)&=\sup_{u\in H^1(\Oe)\setminus\{0\}}\frac{\left(\int_{\partial(\e\Sigma)}u\,\partial_{\nnu}\varphi_n\ds\right)^2}{\|u\|_{H^1(\Omega_\e)}^2} \\
  &\notag\leq 
  \mathfrak{C}^2  \left(S_{N,\Omega}\|\Delta\varphi_n\|_{L^2(\e\Sigma)}\e |\Sigma|^{1/N}+
     \|\nabla\varphi_n\|_{L^2(\e\Sigma;\R^N)}\right)^2.
  \end{align}
  Since $\varphi_n-\varphi_n(0)$ vanishes at $0$ with order $k\geq1$, we have 
  \begin{equation*}
  \Delta \varphi_n(x)=O(|x|^{k-2})
  \quad\text{and}\quad 
  |\nabla\varphi_n(x)|=O(|x|^{k-1})\quad\text{as $x\to0$},
  \end{equation*}
  which implies that 
  \begin{equation}\label{eq:rough2}
  \|\Delta\varphi_n\|_{L^2(\e\Sigma)}=O\left(\e^{k-2+\frac N2}\right)\quad\text{and}\quad \|\nabla\varphi_n\|_{L^2(\e\Sigma;\R^N)}=O\left(\e^{k-1+\frac N2}\right)\quad\text{as }\e\to0.
  \end{equation}
  The conclusion follows by combining \eqref{eq:rough1} and \eqref{eq:rough2}.
 \end{proof}

\begin{remark}\label{ref:rem-Teps-to-0-N=2}
Arguing as in the proof of Lemma \ref{lemma:big_O}, we can prove that, if $N=2$, 
\begin{equation*}
\mathcal{T}_{\overline{\Omega}_\e}(\partial(\e\Sigma),\partial_{\nnu}\varphi_n)=O(\e^{2(k-\delta)})\quad \text{as } \e\to 0,
\end{equation*}
for every $\delta\in(0,1)$. To prove this, it is sufficient to retrace the steps of the previous proof, using the Sobolev embedding $H^1(\Omega)\hookrightarrow L^p(\Omega)$ with $p=2/\delta$. In particular, we have, even in dimension $N=2$,  $\lim_{\e\to 0}\mathcal{T}_{\overline{\Omega}_\e}(\partial(\e\Sigma),\partial_{\nnu}\varphi_n)=0$.
\end{remark}

We are now in position to state and prove the main result of this section.

\begin{theorem}[Blow-up]
\label{th:blowup}
Under assumptions \eqref{eq:ipo-blow}--\eqref{eq:ipo-blow2} with $x_0=0$, let $k\geq 1$ be the vanishing order of $\varphi_n-\varphi_n(0)$ at $0$ and $P_k^{\varphi_n}$ be   as in
 \eqref{eq:polinomi}--\eqref{eq:index0}. Then
 	\begin{equation*}
		\lim_{\e\to0}\e^{-N-2k+2}\mathcal{T}_{\overline{\Omega}_\e}(\partial(\e\Sigma),\partial_{\nnu}\varphi_n)=\tau_{\R^N\setminus\Sigma}(\partial\Sigma,\partial_{\nnu}P_k^{\varphi_n}).
	\end{equation*}
    Furthermore, if $U_\e:=U_{\Omega,\e\Sigma,\partial_\nnu\varphi_n}\in H^1(\Omega_\e)$  is the function achieving $\mathcal{T}_{\overline{\Omega}_\e}(\partial(\e\Sigma),\partial_\nnu\varphi_n)$, see \eqref{eq:U-OEf}, and 
    \begin{equation}\label{eq:def-tilde-U-eps}
        \tilde{U}_\e(x):=\e^{-k}U_\e(\e x), \quad x\in\left(\tfrac{1}{\e}\Omega\right)\setminus\Sigma,
    \end{equation}
    then 
	\begin{equation*}
		\tilde{U}_\e\to \tilde{U}_{\Sigma,\partial_\nnu P_k^{\varphi_n}}\quad\text{ in }H^1(B_R\setminus\overline{\Sigma}),~\text{as }\e \to 0,
	\end{equation*}
	for all $R>0$ such that $\overline{\Sigma}\sub B_R$, where $\tilde{U}_{\Sigma,\partial_\nnu P_k^{\varphi_n}}\in\D^{1,2}(\R^N\setminus\Sigma)$ is the function achieving $\tau_{\R^N\setminus\Sigma}(\partial\Sigma,\partial_{\nnu} P_k^{\varphi_n})$ as in  \eqref{eq:achieve-tau}. 
\end{theorem}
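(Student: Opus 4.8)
The strategy is a blow-up argument centered at $x_0=0$. Writing the variational equation \eqref{eq:variationalU} satisfied by $U_\e=U_{\Omega,\e\Sigma,\partial_\nnu\varphi_n}$, performing the change of variables $y=\e x$ and recalling the definition \eqref{eq:def-tilde-U-eps} of $\tilde U_\e$, one checks that, for every $\tilde v\in H^1\big((\tfrac1\e\Omega)\setminus\Sigma\big)$,
\[
\int_{(\frac1\e\Omega)\setminus\Sigma}\nabla\tilde U_\e\cdot\nabla\tilde v\dx+\e^2\int_{(\frac1\e\Omega)\setminus\Sigma}\tilde U_\e\tilde v\dx=\e^{1-k}\int_{\partial\Sigma}\tilde v\,\big(\nabla\varphi_n(\e x)\cdot\nnu\big)\ds .
\]
By Proposition \ref{prop:asympt_phi} (applied to $\varphi_n-\varphi_n(0)$, which vanishes of order $k\geq1$ at $0$) one has $\e^{1-k}\nabla\varphi_n(\e\,\cdot)\to\nabla P_k^{\varphi_n}$ uniformly on compact subsets of $\R^N$; thus, at least formally, the right-hand side converges to $\int_{\partial\Sigma}\tilde v\,\partial_\nnu P_k^{\varphi_n}\ds$, which is precisely the right-hand side of the variational equation characterizing $\tilde U_{\Sigma,\partial_\nnu P_k^{\varphi_n}}\in\mathcal{D}^{1,2}(\R^N\setminus\Sigma)$ in Proposition \ref{prop:torsion_functions}(ii). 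It therefore suffices to establish uniform $H^1_{\mathrm{loc}}$ bounds for $\tilde U_\e$, extract a convergent subsequence, identify its limit, and finally upgrade the convergence to strong.

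\emph{Uniform bounds.} By Remark \ref{rmk:torsion_norm} and scaling,
\[
\e^{-(N+2k-2)}\mathcal{T}_{\overline{\Omega}_\e}(\partial(\e\Sigma),\partial_\nnu\varphi_n)=\int_{(\frac1\e\Omega)\setminus\Sigma}\big(|\nabla\tilde U_\e|^2+\e^2\tilde U_\e^2\big)\dx ,
\]
which is $O(1)$ by Lemma \ref{lemma:big_O}; in particular $\int_{(\frac1\e\Omega)\setminus\Sigma}|\nabla\tilde U_\e|^2\dx=O(1)$, while a priori only $\int_{(\frac1\e\Omega)\setminus\Sigma}\tilde U_\e^2\dx=O(\e^{-2})$. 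To bound $\tilde U_\e$ locally one applies the Hardy inequality \eqref{eq:hardy_balls} on the \emph{growing} ball $B_{r_0/\e}$, noting that $\overline\Sigma\subset B_{r_0/\e}\subset\tfrac1\e\Omega$ and that $B_{r_0/\e}\setminus\overline\Sigma$ is connected by \eqref{eq:inOmega}: for $\e$ small,
\[
\int_{B_{r_0/\e}\setminus\Sigma}\frac{\tilde U_\e^2}{|x|^2}\dx\leq C_{\textup{H}}\bigg[\int_{B_{r_0/\e}\setminus\Sigma}|\nabla\tilde U_\e|^2\dx+\frac{\e^2}{r_0^2}\int_{B_{r_0/\e}\setminus\Sigma}\tilde U_\e^2\dx\bigg]=O(1),
\]
the bracketed quantity being again $O(1)$ by Lemma \ref{lemma:big_O}. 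Hence $\{\tilde U_\e\}$ is bounded in $H^1(B_R\setminus\overline\Sigma)$ for every fixed $R>0$, and a diagonal argument yields, along any sequence $\e_j\to0$, a subsequence and a function $\tilde U$ with $\tilde U_{\e_j}\weak\tilde U$ in $H^1(B_R\setminus\overline\Sigma)$ and $\tilde U_{\e_j}\to\tilde U$ in $L^2(B_R\setminus\overline\Sigma)$ for every $R$; lower semicontinuity of the Dirichlet integral and Fatou's lemma, applied to the bounds above, together with the characterization \eqref{eq:D12}, give $\tilde U\in\mathcal{D}^{1,2}(\R^N\setminus\Sigma)$.

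\emph{Identification of the limit and of the torsional rigidities.} Passing to the limit in the equation for $\tilde U_\e$ tested against an arbitrary $\tilde v\in C^\infty_{\rm c}(\R^N\setminus\Sigma)$ (admissible for $\e$ small), using the weak convergence of the gradients, the strong $L^2_{\mathrm{loc}}$ convergence, $\e^2\to0$, and the uniform convergence $\e^{1-k}\nabla\varphi_n(\e\,\cdot)\to\nabla P_k^{\varphi_n}$ on $\partial\Sigma$, one obtains $\int_{\R^N\setminus\Sigma}\nabla\tilde U\cdot\nabla\tilde v\dx=\int_{\partial\Sigma}\tilde v\,\partial_\nnu P_k^{\varphi_n}\ds$ for all such $\tilde v$, hence, by density, for all $\tilde v\in\mathcal{D}^{1,2}(\R^N\setminus\Sigma)$; the uniqueness statement in Proposition \ref{prop:torsion_functions}(ii) then forces $\tilde U=\tilde U_{\Sigma,\partial_\nnu P_k^{\varphi_n}}$, and Urysohn's subsequence principle upgrades this to convergence of the whole family. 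For the torsional rigidities, Remark \ref{rmk:torsion_norm} and scaling give $\e^{-(N+2k-2)}\mathcal{T}_{\overline{\Omega}_\e}(\partial(\e\Sigma),\partial_\nnu\varphi_n)=\int_{\partial\Sigma}\tilde U_\e\,\big(\e^{1-k}\nabla\varphi_n(\e x)\cdot\nnu\big)\ds$; since the trace operator $H^1(B_R\setminus\overline\Sigma)\to L^2(\partial\Sigma)$ is compact, $\tilde U_\e\to\tilde U$ in $L^2(\partial\Sigma)$, and therefore, using again Proposition \ref{prop:asympt_phi} and Remark \ref{rmk:torsion_norm}, $\e^{-(N+2k-2)}\mathcal{T}_{\overline{\Omega}_\e}(\partial(\e\Sigma),\partial_\nnu\varphi_n)\to\int_{\partial\Sigma}\tilde U\,\partial_\nnu P_k^{\varphi_n}\ds=\tau_{\R^N\setminus\Sigma}(\partial\Sigma,\partial_\nnu P_k^{\varphi_n})$.

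\emph{Upgrade to strong convergence.} By Lemma \ref{lemma:norm2} — applicable since $\{\e\Sigma\}_{\e\in(0,\e_0)}$ satisfies \textbf{(H)} and \eqref{eq:cap-to-0} and $\mathcal{T}_{\overline{\Omega}_\e}(\partial(\e\Sigma),\partial_\nnu\varphi_n)\to0$ — one has $\e^2\int_{(\frac1\e\Omega)\setminus\Sigma}\tilde U_\e^2\dx=\e^{-(N+2k-2)}\|U_\e\|_{L^2(\Omega_\e)}^2=o(1)$, so that, by the previous step and Remark \ref{rmk:torsion_norm}, $\int_{(\frac1\e\Omega)\setminus\Sigma}|\nabla\tilde U_\e|^2\dx\to\tau_{\R^N\setminus\Sigma}(\partial\Sigma,\partial_\nnu P_k^{\varphi_n})=\int_{\R^N\setminus\Sigma}|\nabla\tilde U|^2\dx$. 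Splitting $(\tfrac1\e\Omega)\setminus\Sigma$ into $B_R\setminus\Sigma$ and $(\tfrac1\e\Omega)\setminus B_R$, using weak lower semicontinuity of the Dirichlet integral on $B_R\setminus\Sigma$ and on the annuli $B_{R'}\setminus B_R$ and then letting $R'\to\infty$, one gets $\int_{B_R\setminus\Sigma}|\nabla\tilde U_\e|^2\dx\to\int_{B_R\setminus\Sigma}|\nabla\tilde U|^2\dx$ for every fixed $R$; together with the weak convergence $\nabla\tilde U_\e\weak\nabla\tilde U$ in $L^2(B_R\setminus\Sigma)$ this yields strong convergence of the gradients in $L^2(B_R\setminus\Sigma)$, hence $\tilde U_\e\to\tilde U_{\Sigma,\partial_\nnu P_k^{\varphi_n}}$ in $H^1(B_R\setminus\overline\Sigma)$. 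The two genuinely delicate points are the local $H^1$ bound — where the use of the $\e$-dependent radius $r_0/\e$ in the Hardy inequality, combined with the rough estimate of Lemma \ref{lemma:big_O}, is essential — and this last no-loss-of-energy-at-infinity argument.
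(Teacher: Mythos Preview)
Your proof is correct and, through the uniform bounds (Hardy on $B_{r_0/\e}$ combined with Lemma \ref{lemma:big_O}), the diagonal extraction, the identification of the weak limit via the scaled equation and uniqueness, Urysohn's principle, and the convergence of the torsional rigidities via trace compactness, it coincides with the paper's argument almost verbatim.

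The one place where you diverge from the paper is the upgrade to strong $H^1(B_R\setminus\overline\Sigma)$ convergence. The paper writes out the identity
\[
\int_{B_R\setminus\Sigma}|\nabla(\tilde U_\e-\tilde U)|^2\dx
=\int_{\partial\Sigma}(\tilde U_\e-\tilde U)(\partial_\nnu\tilde\varphi_\e-\partial_\nnu P_k^{\varphi_n})\ds
+\int_{\partial B_R}(\tilde U_\e-\tilde U)(\partial_\nnu\tilde U_\e-\partial_\nnu\tilde U)\ds
-\e^2\!\int_{B_R\setminus\Sigma}(\tilde U_\e^2-\tilde U_\e\tilde U)\dx,
\]
and then uses interior $H^2$ elliptic regularity on an annulus around $\partial B_R$ to bound $\partial_\nnu\tilde U_\e$ in $L^2(\partial B_R)$, so that the $\partial B_R$ term vanishes by Cauchy--Schwarz and trace compactness. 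You instead show convergence of the \emph{global} Dirichlet energy $\int_{(\frac1\e\Omega)\setminus\Sigma}|\nabla\tilde U_\e|^2\dx\to\int_{\R^N\setminus\Sigma}|\nabla\tilde U|^2\dx$ (using Lemma \ref{lemma:norm2} to kill the $\e^2\tilde U_\e^2$ contribution) and then run a no-loss-of-energy argument: weak lower semicontinuity on $B_R\setminus\Sigma$ and on each annulus $B_{R'}\setminus B_R$, followed by $R'\to\infty$, pins down $\int_{B_R\setminus\Sigma}|\nabla\tilde U_\e|^2\dx\to\int_{B_R\setminus\Sigma}|\nabla\tilde U|^2\dx$. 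This is a legitimate alternative; it trades the local regularity estimate for the softer Lemma \ref{lemma:norm2} (already available) and avoids any discussion of normal traces on $\partial B_R$, at the cost of a slightly less self-contained final step.
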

\begin{proof}
Let $r_0,R_0>0$ be such that $\overline{B_{r_0}}\subset\Omega$ and $\overline\Sigma\subset B_{R_0}$, so that \eqref{eq:inOmega} is satisfied with $x_0=0$ and $\e_0=r_0/R_0$.
Let $R>R_0$ and $\e<\frac{r_0}{2R}$. Since $R<\frac{r_0}\e$ and $\frac{r_0}{\e}>2R_0$, by \Cref{lemma:hardy} and a change of variable, we have
	\begin{align*}
		\int_{B_R\setminus\Sigma}\bigg(|\nabla \tilde{U}_\e|^2&+\frac{\tilde{U}_\e^2}{\abs{x}^2}\bigg)\dx \leq \int_{B_{\frac{r_0}{\e}}\setminus\Sigma}\left(|\nabla \tilde{U}_\e|^2+\frac{\tilde{U}_\e^2}{\abs{x}^2}\right)\dx \\
		&\leq\int_{B_{\frac{r_0}{\e}}\setminus\Sigma}|\nabla \tilde{U}_\e|^2\dx+ C_{\textup{H}} \int_{B_{\frac{r_0}{\e}}\setminus\Sigma}\left(|\nabla \tilde{U}_\e|^2+\frac{\e^2}{r_0^2}\tilde{U}_\e^2\right)\dx\\
  &=\e^{-N-2k+2}\left(\int_{B_{r_0}\setminus\e\Sigma}|\nabla U_\e|^2\dx+ C_{\textup{H}} \int_{B_{r_0}\setminus\e\Sigma}\left(|\nabla {U}_\e|^2+\frac{1}{r_0^2}U_\e^2\right)\dx\right).
	\end{align*}
	Hence, by \Cref{lemma:big_O} we have
	\begin{equation}\label{eq:blow_up_1}
			\int_{B_R\setminus\Sigma}\left(|\nabla \tilde{U}_\e|^2+\frac{\tilde{U}_\e^2}{\abs{x}^2}\right)\dx\leq C_1 \e^{-N-2k+2}\mathcal{T}_{\overline{\Omega}_\e}(\partial(\e\Sigma),\partial_{\nnu}\varphi_n)\leq C_2,
	\end{equation}
	where $C_1,C_2>0$ are constants independent of $R$ and $\e$. Therefore, by a diagonal argument, for every  sequence $\e_j\to0^+$ there exists a subsequence (still denoted as $\{\e_j\}$) and
 a limit profile $\tilde{U}\in L^1_{\textup{loc}}(\R^N\setminus\Sigma)$ such that, for all $R>R_0$, $\tilde{U}\in H^1(B_R\setminus\overline{\Sigma})$  and 
\begin{equation}\label{eq:blow_up_2}
    \tilde{U}_{\e_j} \weak \tilde{U}\quad\text{as $j\to\infty$ weakly in }H^1(B_R\setminus\overline{\Sigma}).
\end{equation}	
Furthermore, by  compactness of the embedding $H^1(B_R\setminus\overline{\Sigma})\hookrightarrow L^2(B_R\setminus\overline{\Sigma})$ and of the trace map from $H^1(B_R\setminus\overline{\Sigma})$ into $L^2(\partial\Sigma)$, we also have, as $j\to\infty$,
 \begin{align}
				&\tilde{U}_{\e_j} \to \tilde{U}\quad\text{strongly in }L^2(B_R\setminus\overline{\Sigma})~\text{for all }R>R_0,\label{eq:blow_up_5} \\
		&\tilde{U}_{\e_j} \to \tilde{U}\quad\text{strongly in }L^2(\partial\Sigma).\label{eq:blow_up_3}
        \end{align}
From \eqref{eq:blow_up_1} and the weak lower semicontinuity of the norm we deduce that
	\[
		\int_{B_R\setminus\Sigma}\left(|\nabla \tilde{U}|^2+\frac{\tilde{U}^2}{\abs{x}^2}\right)\dx\leq C_2\quad\text{for all }R>R_0,
	\]
	which implies that
	\[
		\int_{\R^N\setminus\Sigma}\left(|\nabla \tilde{U}|^2+\frac{\tilde{U}^2}{\abs{x}^2}\right)\dx<+\infty
	\]
	and, consequently, that $\tilde{U}\in \mathcal{D}^{1,2}(\R^N\setminus\Sigma)$, see \eqref{eq:D12}. 
 
For any  $v\in C_c^\infty(\R^N\setminus\Sigma)$ fixed,  let $j$ be sufficiently large in order to ensure that 
 \[
		\mathop{\rm supp}v\sub B_{R_v}\setminus\Sigma\sub\tfrac{1}{\e_j}\Omega\setminus\Sigma,
	\]
	for some $R_v>R_0$. From the equation satisfied by $U_\e$, see \eqref{eq:variationalU}, and  a change of variable 
 it follows that 
	\begin{equation}\label{eq:blow_up_4}
			\int_{B_{R_v}\setminus\Sigma}(\nabla \tilde{U}_{\e_j}\cdot\nabla v+\e_j^2 \tilde{U}_{\e_j} v)\dx-\int_{\partial\Sigma}v\frac{\partial_{\nnu}\varphi_n(\e_j x)}{\e_j^{k-1}}\ds=0.
	\end{equation}
In view of \eqref{eq:blow_up_2}, \eqref{eq:blow_up_5},  and \Cref{prop:asympt_phi}, we can pass to the limit as $j\to \infty$ in \eqref{eq:blow_up_4}. Hence, by density, we obtain
	\[
		\int_{\R^N\setminus\Sigma}\nabla \tilde{U}\cdot\nabla v\dx-\int_{\partial\Sigma}v\,\partial_{\nnu} P_k^{\varphi_n}\ds=0\quad\text{for all }v\in \mathcal{D}^{1,2}(\R^N\setminus\Sigma),
	\]
	which, together with \Cref{prop:torsion_functions}, implies that $\tilde{U}=\tilde{U}_{\Sigma,\partial_\nnu P_k^{\varphi_n}}$. On the other hand, by \eqref{eq:blow_up_3} and \Cref{prop:asympt_phi} we have 
	\begin{multline*}
		\e_j^{-N-2k+2}\mathcal{T}_{\overline{\Omega}_{\e_j}}(\partial(\e_j\Sigma),\partial_{\nnu}\varphi_n)=\int_{\partial\Sigma}\tilde{U}_{\e_j}\frac{\partial_{\nnu}\varphi_n(\e_j x)}{\e_j^{k-1}}\ds\\\to \int_{\partial\Sigma}\tilde{U}_{\Sigma,\partial_\nnu P_k^{\varphi_n}}\partial_{\nnu} P^{\varphi_n}_k=\tau_{\R^N\setminus\Sigma}(\partial\Sigma,\partial_{\nnu}P^{\varphi_n}_k)
	\end{multline*}
	as $j\to\infty$. 
	Being the limit profile uniquely determined, by Urysohn's subsequence principle we conclude that the convergence statements above hold as $\e\to 0$, independently of the sequence $\{\e_j\}$ and of the subsequence. 
 
 In order to prove the strong $H^1$-convergence, we observe that, in view of the equations satisfied by $\tilde{U}_\e$ and $\tilde{U}_{\Sigma,\partial_\nnu P_k^{\varphi_n}}$, for $R>R_0$ we have
    \begin{multline}\label{eq:diff-norma}
        \int_{B_R\setminus\Sigma}\abs{\nabla(\tilde{U}_\e-\tilde{U}_{\Sigma,\partial_\nnu P_k^{\varphi_n}})}^2\dx  = \int_{\partial\Sigma}(\tilde{U}_\e-\tilde{U}_{\Sigma,\partial_\nnu P_k^{\varphi_n}})(\partial_\nnu \tilde{\varphi}_\e-\partial_\nnu P_k^{\varphi_n})\ds \\
         +\int_{\partial B_R}(\tilde{U}_\e-\tilde{U}_{\Sigma,\partial_\nnu P_k^{\varphi_n}})(\partial_\nnu  \tilde U_\e-\partial_\nnu  \tilde{U}_{\Sigma,\partial_\nnu P_k^{\varphi_n}})\ds-\e^2\int_{B_R\setminus\Sigma}(\tilde{U}_\e^2-\tilde{U}_\e \tilde{U}_{\Sigma,\partial_\nnu P_k^{\varphi_n}})\dx,
    \end{multline}
    where 
\begin{equation}\label{eq:def-tilde-varphi-eps}
 \tilde{\varphi}_\e(x):=\frac{\varphi_n(\e x)-\varphi_n(0)}{\e^k}.
\end{equation}
    Since $\tilde U_\e$ weakly solves the equation $-\Delta\tilde U_\e=-\e^2\tilde U_\e$ in $B_{2R}\setminus B_{R_0}$ and 
    the family 
    $\{\e^2\tilde U_\e\}_{0<\e<\frac{r_0}{2R}}$ is bounded in $L^2(B_{2R}\setminus B_{R_0})$, by classical elliptic
    regularity theory $\{\tilde U_\e\}_{0<\e<\frac{r_0}{2R}}$ is bounded in $H^2(B_{\frac32R}\setminus B_{(R_0+R)/2})$, so that, by continuity of the trace operator, $\{\partial_\nnu  \tilde U_\e\}_{0<\e<\frac{r_0}{2R}}$ is bounded in $L^2(\partial B_R)$. 
    This, combined with \Cref{prop:asympt_phi}, convergences \eqref{eq:blow_up_5}--\eqref{eq:blow_up_3}, and the compactness of the embedding $H^1( B_R\setminus\overline{\Sigma})\hookrightarrow
    L^2( B_R\setminus\overline{\Sigma})$, allows us to pass to the limit in \eqref{eq:diff-norma}, proving that $\nabla \tilde U_\e\to \nabla \tilde{U}_{\Sigma,\partial_\nnu P_k^{\varphi_n}}$ strongly in $L^2(B_R\setminus\overline{\Sigma})$ and completing the proof in view of \eqref{eq:blow_up_5}.
    \end{proof}

We finally have  all the necessary ingredients for the proofs of \Cref{thm:blowup_intr} and \Cref{thm:blow_up_eigenfunct}.

\begin{proof}[Proof of \Cref{thm:blowup_intr}]
    By translation, it is not restrictive to assume $x_0=0$. We first observe that the family $\{\Sigma_\e\}_{\e\in (0,\e_0)}=\{\e\Sigma\}_{\e\in (0,\e_0)}$ satisfies  the assumptions of \Cref{thm:main1}. Indeed, by scaling arguments, one can easily verify that $\abs{\e\Sigma}\to 0$ and $\mathrm{Cap}\,(\e\overline{\Sigma})\to 0$ as $\e\to 0$. Moreover, \eqref{eq:ass_sigma_3} follows from \Cref{lemma:ext}.
    In view of \Cref{thm:main1} and \Cref{th:blowup}, to obtain an explicit expansion for the perturbed eigenvalue we only have to analyze the asymptotic behavior, as $\e\to 0$, of the term
    \begin{equation*}
                \int_{\Sigma_\e}\left(\abs{\nabla\varphi_n}^2-(\lambda_n-1)\varphi_n^2 \right)\dx.
    \end{equation*}
    To start, let  us  consider   the case $0\in\Omega\setminus\mathrm{Sing}\,(\varphi_n)$.     Since $\varphi_n$ is smooth, we have 
    \begin{equation*}
        \varphi_n(x)=\varphi_n(0)+O(|x|)\quad\text{and}\quad 
                \nabla\varphi_n(x)=\nabla\varphi_n(0)+O(|x|)\quad\text{as }|x|\to0,
    \end{equation*}
    which directly yields
    \begin{equation}\label{eq:bl_intr_2}
       \int_{\e\Sigma}\left(\abs{\nabla\varphi_n}^2-(\lambda_n-1)\varphi_n^2 \right)\dx= \e^N\,\abs{\Sigma}\left( \abs{\nabla\varphi_n(0)}^2-(\lambda_n-1)\varphi_n^2(0)\right)+o(\e^N)
    \end{equation}
    as $\e\to0$.
    On   the  other  hand, to identify the order of the term $\mathcal{T}_{\overline{\Omega}_\e}(\partial(\e\Sigma),\partial_\nnu  \varphi_n)$ appearing in the expansion \eqref{eq:asym_eigenvalues}, we  distinguish  two cases: $\nabla\varphi_n(0)\neq 0$ and $\nabla\varphi_n(0)=0$. If $\nabla\varphi_n(0)\neq 0$,  we can apply \Cref{th:blowup} with $k=1$ and, since $P_1^{\varphi_n}(x)=\nabla\varphi_n(0)\cdot x$, we obtain 
    \begin{equation}\label{eq:stimaTeps1}
        \mathcal{T}_{\overline{\Omega}_\e}(\partial(\e\Sigma),\partial_\nnu  \varphi_n)=\e^N \tau_{\R^N\setminus\Sigma}(\partial\Sigma,\nabla\varphi_n(0)\cdot\nnu)+o(\e^N)\quad\text{as }\e\to 0.
    \end{equation}
    If, instead,  $\nabla\varphi_n(0)=0$,  then \Cref{th:blowup} applies with some $k \geq 2$,  thus implying  that
    \begin{equation}\label{eq:stimaTeps2}
        \mathcal{T}_{\overline{\Omega}_\e}(\partial(\e\Sigma),\partial_\nnu  \varphi_n)=o(\e^N)\quad\text{as }\e\to  0.
    \end{equation}
    Moreover, trivially,
    \begin{equation}\label{eq:stimaTeps3}
        \tau_{\R^N\setminus \Sigma}(\partial \Sigma,\nabla\varphi_n(0)\cdot\nnu)=
             \tau_{\R^N\setminus \Sigma}(\partial \Sigma,0)=0.
    \end{equation}  
    Combining \eqref{eq:stimaTeps1}, \eqref{eq:stimaTeps2}, and \eqref{eq:stimaTeps3} with \eqref{eq:bl_intr_2} we obtain (i). 
    
If $0\in\mathrm{Sing}\,(\varphi_n)$ and $k\geq 2$ is the vanishing order of $\varphi_n$ at $0$, then 
\begin{equation*}
        \varphi_n(x)=O(|x|^k)\quad\text{and}\quad 
                \nabla\varphi_n(x)=\nabla P^{\varphi_n}_k(x)+O(|x|^{k})\quad\text{as }|x|\to0,
    \end{equation*}
 thus implying that, as $\e\to0$,
\begin{align*}
 & \int_{\e\Sigma}|\nabla\varphi_n(x)|^2\dx=
       \int_{\e\Sigma}|\nabla P^{\varphi_n}_k|^2\dx+O(\e^{2k-1+N})=
\e^{N+2k-2}\left(\int_{\Sigma}|\nabla P^{\varphi_n}_k|^2\dx+o(1)\right),\\
&\int_{\e\Sigma}|\varphi_n(x)|^2\dx=O(\e^{2k+N})=o(\e^{N+2k-2}),
\end{align*}
and hence
    \begin{equation*}
       \int_{\e\Sigma}\left(\abs{\nabla\varphi_n}^2-(\lambda_n-1)\varphi_n^2 \right)\dx= \e^{N+2k-2}\left(\int_{\Sigma}|\nabla P^{\varphi_n}_k|^2\dx+o(1)\right)\quad\text{as }\e\to0.
    \end{equation*}
 Combining this and \Cref{th:blowup} with \Cref{thm:main1} we obtain (ii).
\end{proof}

\begin{proof}[Proof of \Cref{thm:blow_up_eigenfunct}]
    By translation, it is not restrictive to assume $x_0=0$. Let $u_\e:=\Phi_\e-\tilde{\varphi}_\e+\tilde U_\e$, where $\tilde{\varphi}_\e$ and $\tilde{U}_\e$ are defined in \eqref{eq:def-tilde-varphi-eps} and \eqref{eq:def-tilde-U-eps}, respectively.
    From   \eqref{eq:asym_eigenfunctions} and  the change of variable $x\mapsto \e x$ it follows that, as $\e\to 0$,
    \begin{equation*}
        \int_{\frac{1}{\e}\Omega\setminus\Sigma}\abs{\nabla u_\e}^2\dx+\e^2\int_{\frac{1}{\e}\Omega\setminus\Sigma}u_\e^2\dx \\ 
        =o\big(\e^{-N-2k+2}\mathcal{T}_{\overline{\Omega}_\e}(\partial(\e\Sigma),\partial_\nnu\varphi_n)\big)+O\big(\e^{-N-2k+2}\|\varphi_n\|_{L^2(\e\Sigma)}^4\big).
    \end{equation*}   
If $\varphi_n(0)=0$, $\varphi_n(x)=O(|x|^k)$ as $|x|\to0$, hence $\|\varphi_n\|_{L^2(\e\Sigma)}^4=O(\e^{2N+4k})=o(\e^{N+2k-2})$ as $\e\to0$. If $\varphi_n(0)\neq0$,  either $k=1$ or $k=2$ by Remark \ref{rem:k12}, so that $\|\varphi_n\|_{L^2(\e\Sigma)}^4=O(\e^{2N})=o(\e^{N+2k-2})$ as $\e\to0$. In both cases we have 
\begin{equation}\label{eq:norma-phi-n-4}
\|\varphi_n\|_{L^2(\e\Sigma)}^4=o(\e^{N+2k-2})\quad\text{as }\e\to0.    
\end{equation}
 From this and \Cref{th:blowup} we deduce that
    \begin{equation}\label{eq:bl_eigenfunct_1}
                \int_{\frac{1}{\e}\Omega\setminus\Sigma}\abs{\nabla u_\e}^2\dx+\e^2\int_{\frac{1}{\e}\Omega\setminus\Sigma}u_\e ^2\dx\to 0\quad\text{as }\e\to 0.
    \end{equation}
Let $r_0,R_0>0$ be such that $\overline{B_{r_0}}\subset\Omega$ and $\overline\Sigma\subset B_{R_0}$; let $R>R_0$ and $\e<\frac{r_0}{2R}$. By \Cref{lemma:hardy} we have
    \begin{align*}
		\int_{B_R\setminus\Sigma}\bigg(|\nabla u_\e|^2+\frac{u_\e^2}{\abs{x}^2}\bigg)\dx &\leq \int_{B_{\frac{r_0}{\e}}\setminus\Sigma}\bigg(|\nabla u_\e|^2+\frac{u_\e^2}{\abs{x}^2}\bigg)\dx \\
		&\leq\int_{B_{\frac{r_0}{\e}}\setminus\Sigma}|\nabla u_\e|^2\dx+ C_{\textup{H}} \int_{B_{\frac{r_0}{\e}}\setminus\Sigma}\bigg(|\nabla u_\e|^2+\frac{\e^2}{r_0^2}u_\e^2\bigg)\dx \\
   &\leq (C_{\textup{H}}+1)\left(\int_{\frac{1}{\e}\Omega\setminus\Sigma}\abs{\nabla u_\e}^2\dx+\frac{\e^2}{r_0^2}\int_{\frac{1}{\e}\Omega\setminus\Sigma}u_\e^2\dx\right).
	\end{align*}
    From this estimate and \eqref{eq:bl_eigenfunct_1} we deduce that
    \[
        \int_{B_R\setminus\Sigma}\bigg(|\nabla u_\e|^2+\frac{u_\e^2}{\abs{x}^2}\bigg)\dx\to 0\quad\text{as }\e\to 0
    \]
    for any $R>0$ such that $\overline{\Sigma}\sub B_R$, which implies that $u_\e\to0$ strongly in $H^1(B_R\setminus\overline{\Sigma})$ as $\e\to0$. Combining this with \Cref{prop:asympt_phi} and \Cref{th:blowup} we obtain \eqref{eq:bl_eigenfunct_th1}.

Finally,  \eqref{eq:rate-conv-autofunz} follows from \eqref{eq:asym_eigenfunctions-2}, Theorem \ref{th:blowup}, and \eqref{eq:norma-phi-n-4}.
\end{proof}


\section{The case of a spherical hole}
\label{sec:spher}

In this section, we focus on spherical  holes, deriving in this specific situation more explicit expressions for the coefficients of the asymptotic expansions obtained above. We distinguish between the cases $N\geq3$ and $N=2$.

\subsection{The case $N\geq3$}

As proved in Theorem \ref{thm:blowup_intr}, different behaviors occur depending
on the vanishing order or $\varphi_n$ at $x_0$.
The most interesting and diverse phenomena are observed when $x_0\in\Omega\setminus\mathrm{Sing}(\varphi_n)$, as in this situation the sign of the leading term in the asymptotic expansion is not always the same regardless of where the domain is perforated. In view of Theorem \ref{thm:blowup_intr}-(i), the interface $\Gamma$ defined in \eqref{eq:def_Gamma}
divides the points of $\Omega$ where a hole produces a positive sign of the eigenvalue variation $\lambda_n^\e-\lambda_n$ from those where there would be a negative sign, see Remark \ref{rem:Gamma}.
Here we focus on the specific case 
\[
\Sigma = B_1,
\]
providing the proof of Theorem \ref{thm:spher}, to which we precede the following preliminary lemma.
\begin{lemma}\label{l:spherical-torsion}
    If $N\geq 3$ and $P:\R^N\to\R$ is a harmonic polynomial homogeneous of degree $k\in \N\setminus\{0\}$, then
    \begin{equation*}
        \tau_{\R^N\setminus B_1}(\partial B_1,\partial_\nnu P)= 
        \frac{k^2}{N+k-2}\int_{\partial B_1}Y^2\ds,
    \end{equation*}
    where $Y$ is the spherical harmonic of degree $k$ given by $Y=P\big|_{\partial B_1}$.
\end{lemma}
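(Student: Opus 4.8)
The plan is to reduce the computation, via \Cref{prop:torsion_functions}(ii) and \Cref{rmk:torsion_norm}, to writing down the exterior harmonic extension explicitly. Recall that $\tau_{\R^N\setminus B_1}(\partial B_1,\partial_\nnu P)=\int_{\partial B_1}(\partial_\nnu P)\,\tilde U\ds$, where $\tilde U:=\tilde U_{B_1,\partial_\nnu P}\in\mathcal{D}^{1,2}(\R^N\setminus B_1)$ is the unique function with $\int_{\R^N\setminus B_1}\nabla\tilde U\cdot\nabla v\dx=\int_{\partial B_1}v\,\partial_\nnu P\ds$ for every $v\in\mathcal{D}^{1,2}(\R^N\setminus B_1)$. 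So it suffices to exhibit $\tilde U$ and plug it into this identity.

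First I would write $P(x)=|x|^kY(x/|x|)$ with $Y=P|_{\partial B_1}$, and observe, by Euler's identity for functions homogeneous of degree $k$, that $\partial_\nnu P=x\cdot\nabla P=kP=kY$ on $\partial B_1$ (here $\nnu$ is the outer unit normal to $B_1$, as throughout the paper). Then I would propose
\[
\tilde U(x):=\frac{k}{N+k-2}\,|x|^{2-N-k}\,Y\Big(\frac{x}{|x|}\Big),
\]
and check three points: (i) $\tilde U$ is harmonic in $\R^N\setminus\overline{B_1}$ — most cleanly, $|x|^{2-N-k}Y(x/|x|)$ is the Kelvin transform $|x|^{2-N}P(x/|x|^2)$ of the harmonic polynomial $P$, hence harmonic away from the origin (alternatively, use $\Delta\big(|x|^\alpha Y(x/|x|)\big)=\big(\alpha(\alpha+N-2)-k(k+N-2)\big)|x|^{\alpha-2}Y(x/|x|)$, which vanishes at $\alpha=2-N-k$); (ii) $\tilde U\in\mathcal{D}^{1,2}(\R^N\setminus B_1)$, which by the characterization \eqref{eq:D12} only requires $\int_{\R^N\setminus B_1}\big(|\nabla\tilde U|^2+\tilde U^2/|x|^2\big)\dx<\infty$, true since $\tilde U$ is smooth up to $\partial B_1$ and $|\nabla\tilde U|^2+\tilde U^2/|x|^2=O(|x|^{2-2N-2k})$ is integrable at infinity because $N+2k>2$; (iii) $\partial_\nnu\tilde U=\partial_r\tilde U=-k\,|x|^{1-N-k}\,Y(x/|x|)$, so $\partial_\nnu\tilde U=-kY=-\partial_\nnu P$ on $\partial B_1$. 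Integrating by parts on $B_R\setminus\overline{B_1}$ for $R$ large enough that $v$ is supported in $\overline{B_R}$, (i) and (iii) give $\int_{\R^N\setminus B_1}\nabla\tilde U\cdot\nabla v\dx=\int_{\partial B_1}v\,\partial_\nnu P\ds$ for every $v\in C_c^\infty(\R^N\setminus B_1)$, and then for all $v\in\mathcal{D}^{1,2}(\R^N\setminus B_1)$ by density and continuity of the trace map into $L^2(\partial B_1)$; by uniqueness in \Cref{prop:torsion_functions}(ii), $\tilde U=\tilde U_{B_1,\partial_\nnu P}$.

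It then only remains to compute: since $\partial_\nnu P=kY$ and $\tilde U|_{\partial B_1}=\frac{k}{N+k-2}Y$,
\[
\tau_{\R^N\setminus B_1}(\partial B_1,\partial_\nnu P)=\int_{\partial B_1}(kY)\,\frac{k}{N+k-2}\,Y\ds=\frac{k^2}{N+k-2}\int_{\partial B_1}Y^2\ds,
\]
which is the claim. As a consistency check one may instead evaluate $\tau=\int_{\R^N\setminus B_1}|\nabla\tilde U|^2\dx$ in polar coordinates, using $\int_{\partial B_1}|\nabla_{\partial B_1}Y|^2\ds=k(k+N-2)\int_{\partial B_1}Y^2\ds$ and $\int_1^{+\infty}r^{1-N-2k}\dr=\frac1{N+2k-2}$; both routes agree. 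I do not expect any serious obstacle: the only delicate point is keeping track of the orientation of $\nnu$ so as to land on the constant $\frac{k}{N+k-2}$ with the correct sign — and even this is forced a posteriori, since $\tau\ge0$, $k\ge1$, and $\int_{\partial B_1}Y^2\ds>0$.
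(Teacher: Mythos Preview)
Your proof is correct and follows essentially the same route as the paper: explicitly construct the exterior torsion function $\tilde U$ (you via the Kelvin transform $|x|^{2-N}P(x/|x|^2)$, the paper via the radial ODE for $u(r)$ with $U=u(r)Y$) and then evaluate $\tau=\int_{\partial B_1}(\partial_\nnu P)\,\tilde U\ds$. The only cosmetic discrepancy is the orientation of $\nnu$: you take it outer to $B_1$, whereas the paper's weak formulation in \Cref{prop:torsion_functions}(ii) corresponds to $\nnu$ outer to $\R^N\setminus B_1$; this flips the sign of the intermediate $\tilde U$ (the paper obtains $U=-\tfrac{k}{N+k-2}r^{-(N+k-2)}Y$) but leaves $\tau$ unchanged, as you note, since $\tau$ is quadratic in the boundary datum.
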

\begin{proof}
    To determine  the torsion function 
$U := \tilde U_{B_1,\partial_\nnu P}$, we recall that $U\in \mathcal D^{1,2}(\R^N\setminus B_1)$ is the unique weak solution to  
\begin{equation}\label{eq:bvpB1}
\begin{bvp}
-\Delta U &=0, &&\text{in }\R^N\setminus \overline{B_1}, \\
\partial_{\nnu}U&=\partial_\nnu P, &&\text{on } \partial B_1.
\end{bvp}
\end{equation}
We work in spherical coordinates $(r,\boldsymbol{\theta})$ and look for solutions to \eqref{eq:bvpB1} of the form
\[
U(r, \boldsymbol{\theta}) = u(r)Y(\boldsymbol\theta),
\]
where $Y=P\big|_{\partial B_1}$. We observe that, since $P$ is harmonic and $k$-homogeneous, $Y$ is a spherical harmonic of degree $k$ and solves 
\begin{equation*}
-\Delta_{\partial B_1} Y =k (N+k-2) Y,  \quad\text{on }\partial B_1,  
\end{equation*}
where $\Delta_{\partial B_1}$ is the Laplace-Beltrami operator.
Then, we can rewrite \eqref{eq:bvpB1} as
\begin{equation}
\label{eq:bvpB2}
\begin{cases}
u''(r) + \dfrac{N-1}{r} u'(r)-\dfrac{k(N+k-2)}{r^2}u(r) =0, &\text{in }(1,+\infty), \\
u'(1)=k.&
\end{cases}
\end{equation}
The solutions to the equation in the first line of \eqref{eq:bvpB2} are of the form 
\begin{equation*}
    u(r)=c_1 r^k+c_2 r^{-(N+k-2)}
\end{equation*}
for some $c_1,c_2$. The fact that $U(r, \boldsymbol{\theta}) = u(r)Y(\boldsymbol\theta)\in \mathcal D^{1,2}(\R^N\setminus B_1)$ implies that necessarily $c_1=0$, whereas the condition $u'(1)=k$ yields $c_2=-\frac{k}{N+k-2}$. Hence, by uniqueness of the torsion function,
\[
U(r,\boldsymbol \theta) = 
-\frac{k}{N+k-2}r^{-(N+k-2)}Y(\boldsymbol\theta).
\]
We conclude that
\[
\tau_{\R^N\setminus B_1}(\partial B_1,\partial_\nnu P)= \int_{\partial B_1} U
(\partial_\nnu P)\ds 
= \frac{k^2}{N+k-2} \int_{\partial B_1}Y^2(\boldsymbol\theta)\ds,
\]
thus completing the proof.
\end{proof}

\begin{proof}[Proof of Theorem \ref{thm:spher}]
 We observe that $P(x)=\nabla \varphi_n(x_0)\cdot x$ is a harmonic polynomial  of degree $k=1$. Then Lemma \ref{l:spherical-torsion} applies and yields
 \[
\tau_{\R^N\setminus B_1}(\partial B_1,\nabla\varphi_n(x_0)\cdot\nnu)
= \frac{1}{N-1} \int_{\partial B_1} |\nabla \varphi_n(x_0) \cdot \bm{\theta}|^2 \ds.
\]
Exploiting the symmetry of the domain of integration, 
a simple computation yields
\[
\tau_{\R^N\setminus B_1}(\partial B_1,\nabla\varphi_n(x_0)\cdot\nnu)= \frac{\mathcal{H}^{N-1}(\partial B_1)}{N(N-1)}|\nabla \varphi_n(x_0)|^2 = \frac{\omega_{N}}{N-1}|\nabla \varphi_n(x_0)|^2 ,
\]
where 
$\omega_N:=|B_1|$ denotes the $N$-dimensional measure of $B_1$. Substituting the above expression for $\tau_{\R^N\setminus B_1}(\partial B_1,\nabla\varphi_n(x_0)\cdot\nnu)$ in the expansion of Theorem \ref{thm:blowup_intr}-(i), we obtain  (i).

If $x_0\in \mathrm{Sing}(\varphi_n)$, $\varphi_n$ vanishes at $x_0$ with order $k\geq2$. Then, as observed in  Proposition \ref{prop:asympt_phi}, $P_{x_0,k}^{\varphi_n}$  is a harmonic polynomial homogeneous of degree $k$. From Lemma  \ref{l:spherical-torsion} it follows that 
\begin{equation}\label{eq:sf-sing1}
    \tau_{\R^N\setminus B_1}(\partial B_1,\partial_\nnu P_{x_0,k}^{\varphi_n} )= 
        \frac{k^2}{N+k-2}\int_{\partial B_1}Y^2\ds
\end{equation}
where $Y=P_{x_0,k}^{\varphi_n}\big|_{\partial B_1}$ is a spherical harmonic of degree $k$. Furthermore, by the fact that $\Delta P_{x_0,k}^{\varphi_n}=0$ and the Divergence Theorem, we have 
\begin{equation}\label{eq:sf-sing2}
    \int_{B_1}|\nabla P_{x_0,k}^{\varphi_n}(x)|^2\dx=\int_{B_1}\mathop{\rm div}(P_{x_0,k}^{\varphi_n}\nabla P_{x_0,k}^{\varphi_n})\dx=
    \int_{\partial B_1}P_{x_0,k}^{\varphi_n}\nabla P_{x_0,k}^{\varphi_n}\cdot
    \bm{\theta}\ds=k\int_{\partial B_1}Y^2\ds.
\end{equation}
    Substituting \eqref{eq:sf-sing1}--\eqref{eq:sf-sing2} in the expansion of Theorem \ref{thm:blowup_intr}-(ii), we obtain  (ii).
\end{proof}

Thanks to Lemma \ref{l:spherical-torsion} and Theorem \ref{thm:spher}, the interface $\Gamma$ defined in \eqref{eq:def_Gamma} can be described quite explicitly in the case of spherical holes. More precisely, if $\Sigma=B_1$ we have 
\[
    \Gamma=\left\{x\in \Omega\setminus \mathrm{Sing}\,(\varphi_n)\colon h(x)=0\right\},
\]
where
\[
    h(x):=\frac{N}{N-1}|\nabla \varphi_n(x)|^2  - (\lambda_n- 1) \varphi_n^2(x).
\]
We present below the example  of spherical holes excised from $3$-dimensional boxes.
\begin{example}
     Let us consider the $3$-dimensional open box
\[
\Omega = (0,1)\times(0,\sqrt[4]{2})\times(0,\sqrt[4]{3}).
\]
It is a well-known fact (see e.g. \cite{grebenkov}) that the eigenvalues of problem \eqref{eq:P0} on $\Omega$ are simple and of the form
\[
\lambda_{n_1,n_2,n_3} = \pi^2 n_1^2 + \frac{\pi^2 n_2^2}{\sqrt{2}} + \frac{\pi^2n_3^2}{\sqrt{3}}+1,\quad n_1,n_2,n_2\in\N,
\]
and the associated eigenfunctions are, up to a normalization constant, 
\[
\varphi_{n_1,n_2,n_3}(x,y,z) = \cos\left(\pi n_1 x\right)\cos\left(\frac{\pi n_2}{\sqrt[4]{2}}y\right)\cos\left(\frac{\pi n_3}{\sqrt[4]{3}}z\right).
\]
Then the interface $\Gamma$ associated to $\varphi_{n_1,n_2,n_3}$ is characterized by the equation
\[
n_1^2\tan^2(\pi n_1 x) 
+\frac{n_2^2}{\sqrt2}\tan^2\left(\frac{\pi n_2 y}{\sqrt[4]2}\right) 
+\frac{n_3^2}{\sqrt3}\tan^2\left(\frac{\pi n_3 z}{\sqrt[4]3}\right)
-\frac23 \left(n_1^2 + \frac{n_2^2}{\sqrt 2} + \frac{n_3^2}{\sqrt 3}\right)=0.
\]
Let us consider two specific cases. The first situation of interest is the one corresponding to the the smallest nontrivial eigenvalue, namely,
\[
\lambda_{0,0,1} =\frac{\pi^2}{\sqrt{3}}+1.
\]
Here, $\Gamma$ turns out to be the union of two planes
\[
\Gamma = \left\{
(x,y,z) \in \R^3 \ : \ 
z = \frac{\sqrt[4]3}{\pi}\arctan\sqrt{\frac23} \right\} \cup 
\left\{
(x,y,z) \in \R^3 \ : \ 
z = \frac{\sqrt[4]3}{\pi} \Big(\pi - \arctan\sqrt{\frac23} \,\Big) \right\}.
\]
In Figure \ref{img2} we can see the plot of $\Gamma$ (in blue), along with the nodal set of the eigenfunction $\varphi_{0,0,1}$ (in green). 
By our analysis, if the hole is punctured between the green and a blue plane, then $\lambda_n^\e<\lambda_n$. 

\begin{figure}[ht]
\begin{center}
\includegraphics[width=6.5cm]{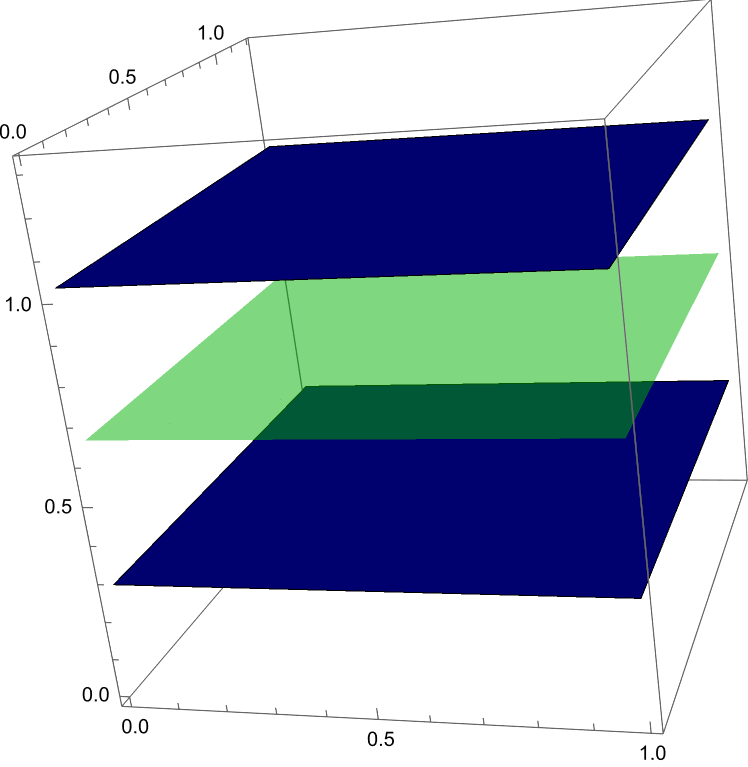}
\caption{The case $\lambda_{0,0,1}$}
\label{img2}
\end{center}
\end{figure}

Finally, we describe $\Gamma$ for $\lambda_n = \lambda_{1,1,1}$. In this case the situation is more complex, but the general picture does not change.
With the help of Mathematica\texttrademark, we can plot the set $\Gamma = \{h = 0\}$, along with the nodal set of $\varphi_{1,1,1}$ (once again in blue and green respectively). The resulting image is presented in Figure \ref{img3}.

\begin{figure}[ht]
\begin{center}
\includegraphics[width=6.5cm]{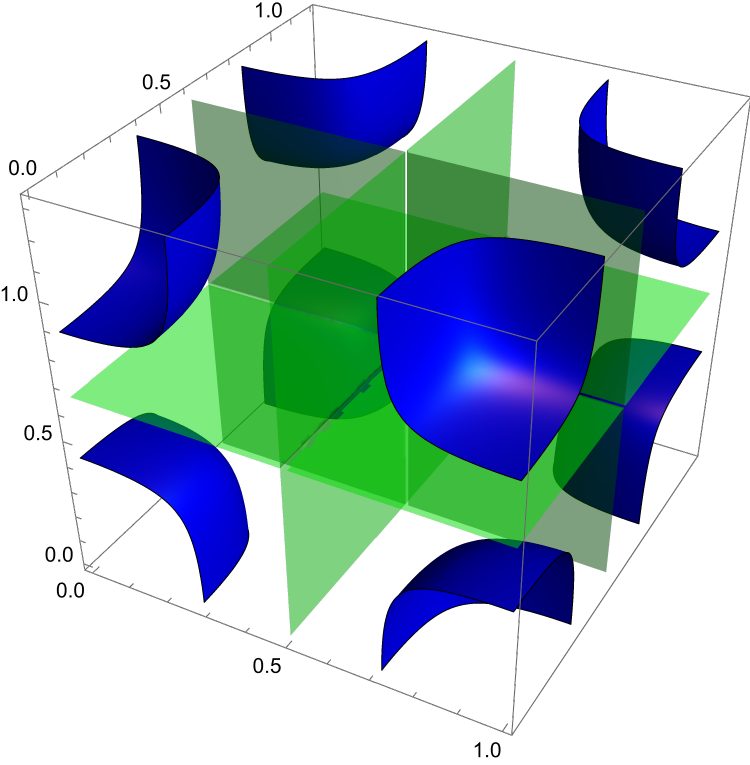}
\caption{The case $\lambda_{1,1,1}$}
\label{img3}
\end{center}
\end{figure}
\end{example}

\subsection{The case  \texorpdfstring{$N=2$}{N=2}}

In this subsection we consider the case $N=2$ and $\Sigma_\e$ being of the form
\eqref{eq:hole-shrinking} with $\Sigma=B_1$ and $x_0=0$, i.e. $\Sigma_\e=B_\e$, proving the following asymptotic expansion for $\mathcal{T}_{\overline{\Omega}\setminus B_\e}(\partial B_\e,\partial_\nnu \varphi_n)$. 
\begin{proposition}\label{p:exp-2D}
    If $N=2$ and  the vanishing order of $\varphi_n-\varphi_n(0)$  at $0$ is $k\geq1$, then
\begin{itemize}
    \item[(i)] if $\varphi_n(0)\neq0$  and $0$ is a critical point of $\varphi_n$ (hence, necessarily, $k=2$), then  
    \begin{equation*}
       \mathcal{T}_{\overline{\Omega}\setminus B_\e}(\partial B_\e,\partial_\nnu \varphi_n)=    \frac\pi2 (\lambda_n-1)^2(\varphi_n(0))^2\e^{4}|\log\e |+o(\e^{4}|\log\e |)\quad\text{as }\e\to0;
    \end{equation*}
\item[(ii)] if either  $\varphi_n(0)=0$ or $\nabla\varphi_n(0)\neq(0,0)$, then 
    \begin{equation*}
       \mathcal{T}_{\overline{\Omega}\setminus B_\e}(\partial B_\e,\partial_\nnu \varphi_n)=
    \pi k\left( 
    \bigg|\frac{\partial^{k}\varphi_n}{\partial x_1^k}(0) \bigg|^2+\frac1{k^2}\bigg|\frac{\partial^{k}\varphi_n}{\partial x_1^{k-1}
    \partial x_2}(0) \bigg|^2\right) \e^{2k}+o(\e^{2k})\quad\text{as }\e\to0.
    \end{equation*}
\end{itemize}
\end{proposition}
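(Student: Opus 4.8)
Since the blow-up argument of the preceding section rests on the Hardy inequality of \Cref{lemma:hardy} and hence requires $N\ge 3$, the plan is to compute $\mathcal{T}_{\overline{\Omega}\setminus B_\e}(\partial B_\e,\partial_\nnu\varphi_n)$ by determining the torsion function $U_\e:=U_{\Omega,B_\e,\partial_\nnu\varphi_n}\in H^1(\Omega\setminus\overline{B_\e})$ explicitly in a fixed neighbourhood of the hole through separation of variables, and then using $\mathcal{T}_{\overline{\Omega}\setminus B_\e}(\partial B_\e,\partial_\nnu\varphi_n)=\int_{\partial B_\e}U_\e\,\partial_\nnu\varphi_n\,\ds$ (\Cref{rmk:torsion_norm}). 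A first reduction is to replace $\Omega$ by a small disk $B_{r_0}$: the bound $\mathcal{T}_{\overline{\Omega}\setminus B_\e}\le\mathcal{T}_{\overline{B_{r_0}}\setminus B_\e}$ is \Cref{c:monotonicity-domain}, while the reverse inequality, up to errors that will turn out to be of lower order, is obtained by inserting a cut-off of the torsion function of $B_{r_0}\setminus\overline{B_\e}$ into the variational characterization \eqref{eq:torsion_sup} on $\Omega\setminus\overline{B_\e}$, the cut-off error being controlled by the smallness of the torsion function away from $B_\e$ (which follows from interior elliptic estimates together with $\|U_\e\|_{H^1}^2=\mathcal{T}_\e\to 0$).

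\textbf{Fourier--Bessel expansion of the torsion function.} On the annulus $B_{r_0}\setminus\overline{B_\e}$, writing $x=r(\cos\theta,\sin\theta)$ and expanding $U_\e$ in the basis $\{\cos m\theta,\sin m\theta\}_{m\ge 0}$, the equation $-\Delta U_\e+U_\e=0$ decouples into modified Bessel equations, so that the $m$-th mode of $U_\e$ equals $\alpha_{m,\e}I_m(r)+\beta_{m,\e}K_m(r)$, with $I_m,K_m$ the modified Bessel functions of first and second kind. The two homogeneous Neumann conditions (at $\partial B_\e$ and, once $\Omega=B_{r_0}$, at $\partial B_{r_0}$) give: at $r=r_0$, $\alpha_{m,\e}$ is a fixed ($\e$-independent) multiple of $\beta_{m,\e}$, hence negligible against it; at $r=\e$, the condition $\partial_r U_\e(\e,\cdot)=\partial_r\varphi_n(\e,\cdot)$ then pins down $\beta_{m,\e}$. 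The right-hand side is read off from the Taylor expansion at $0$ of the real-analytic eigenfunction $\varphi_n$: if $\varphi_n(0)=0$ its $m$-th Fourier mode vanishes for $|m|<k$ and equals $c_m r^{|m|}(1+O(r^2))$ for $|m|\ge k$ (each mode solves a Bessel equation of the corresponding order and must vanish to order $\ge k$), the two coefficients of the degree-$k$ harmonic part $P_k^{\varphi_n}$ being, in the plane, multiples of $\partial_{x_1}^{k}\varphi_n(0)$ and $\partial_{x_1}^{k-1}\partial_{x_2}\varphi_n(0)$; if instead $\varphi_n(0)\ne0$, there is in addition a radial ($m=0$) mode $\langle\varphi_n\rangle(r)=\varphi_n(0)J_0(\sqrt{\lambda_n-1}\,r)$, whose radial derivative at $r=\e$ is of order $\e$. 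Using the small-argument behaviour $K_m(z)\sim\tfrac12(m-1)!(2/z)^{m}$ for $m\ge1$, $K_0(z)\sim-\log z$ and $I_m(z)\sim(z/2)^m/m!$, one gets $\beta_{m,\e}=O(\e^{2|m|})$, possibly with a $|\log\e|$ factor when $m=0$, and the dominant contribution to $\mathcal{T}_\e$ comes from the lowest Fourier mode on which the Neumann datum does not vanish.

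\textbf{Extracting the leading term.} In case (ii) with $0\in\mathrm{Sing}(\varphi_n)$ one has $\varphi_n(0)=0$, hence no radial mode and no modes with $|m|<k$; the smallest surviving mode is $m=\pm k$, and substituting $\beta_{\pm k,\e}$ into $\int_{\partial B_\e}U_\e\,\partial_\nnu\varphi_n\,\ds$ and using the Bessel asymptotics yields, at order $\e^{2k}$, the stated quadratic form in $\partial_{x_1}^{k}\varphi_n(0)$ and $\partial_{x_1}^{k-1}\partial_{x_2}\varphi_n(0)$, the modes $|m|>k$ contributing only $O(\e^{2|m|})=o(\e^{2k})$. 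The remaining sub-case of (ii) is $\nabla\varphi_n(0)\ne0$, i.e.\ $k=1$, which is treated identically, the $m=0$ mode being then of order $\e^4|\log\e|=o(\e^2)$. In case (i), $\varphi_n(0)\ne0$ and $\nabla\varphi_n(0)=0$ force $k=2$; the radial Neumann datum has size $-\tfrac12(\lambda_n-1)\varphi_n(0)\e+O(\e^3)$ (coming from $\Delta\varphi_n(0)=(1-\lambda_n)\varphi_n(0)\ne0$), and, paired with the logarithmic singularity of $K_0$, it makes the $m=0$ mode of $U_\e$ equal to approximately $-\tfrac12(\lambda_n-1)\varphi_n(0)\,\e^2\log\e$ on $\partial B_\e$; integrating against $\partial_\nnu\varphi_n$ over $\partial B_\e$ produces $\tfrac{\pi}{2}(\lambda_n-1)^2\varphi_n(0)^2\,\e^4|\log\e|$, which dominates both the $m=\pm2$ contribution and the $O(\e^4)$ non-logarithmic part of the $m=0$ mode.

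\textbf{Main obstacle.} The delicate points are, first, making rigorous the reduction to the disk and the control of the regular coefficients $\alpha_{m,\e}$ and of the outer region — that is, proving that these enter only at higher order, which relies on the uniform ($\e$-independent) bounds from the hypotheses and from elliptic regularity, together with the fact that the modes responsible for the outer contributions are suppressed (their $\beta_{m,\e}$ being of order $\e^{2|m|}$ with $|m|$ large, or identically zero for $m=0$ when $\varphi_n(0)=0$); and, second, in case (i), carrying the constants through the small-argument expansions of $I_0$ and $K_0$ accurately enough to isolate the coefficient of $\e^4|\log\e|$ and to certify that the $m=\pm2$ term and the non-logarithmic part of the $m=0$ mode are both $o(\e^4|\log\e|)$.
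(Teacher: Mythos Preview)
Your approach is correct in outline but differs from the paper's in two notable ways.

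\textbf{Reduction to a disk.} You propose one-sided monotonicity ($B_{r_0}\subset\Omega$) plus a cut-off of the disk torsion function for the reverse bound. The paper instead uses a \emph{two-sided} sandwich: choosing $R_1,R_2$ with $B_{R_1}\subset\Omega\subset B_{R_2}$, \Cref{c:monotonicity-domain} alone gives
\[
\mathcal{T}_{\overline{B_{R_2}}\setminus B_\e}(\partial B_\e,\partial_\nnu\varphi_n)\le \mathcal{T}_{\overline{\Omega}\setminus B_\e}(\partial B_\e,\partial_\nnu\varphi_n)\le \mathcal{T}_{\overline{B_{R_1}}\setminus B_\e}(\partial B_\e,\partial_\nnu\varphi_n),
\]
and the conclusion is immediate once one knows (\Cref{l:confr-om-nonom}) that the leading asymptotics of $\mathcal{T}_{\overline{B_R}\setminus B_\e}$ are \emph{independent of $R$}. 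Your cut-off argument is valid (interior elliptic estimates plus \Cref{lemma:norm2} do show the annular error is $o(\mathcal T_\e)$), but the sandwich is cheaper.

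\textbf{Treatment of the operator.} You keep $-\Delta+1$ and expand in modified Bessel functions $I_m,K_m$, extracting the leading behaviour from their small-argument asymptotics. The paper instead introduces an auxiliary quantity $T_{\e,R}^j$ defined by minimising $\tfrac12\int|\nabla u|^2-\int_{\partial B_\e}u\,\partial_\nnu P^{\varphi_n}_j$ over \emph{zero-average} functions, so that the minimiser solves $-\Delta W=\text{const}$ and the Fourier modes are just $r^{\pm m}$ and $\log r$ (\Cref{l:teRj}). Two short comparison lemmas then show $T_{\e,R}^k\sim\mathcal T_{\overline{B_R}\setminus B_\e}(\partial B_\e,\partial_\nnu P^{\varphi_n}_k)\sim\mathcal T_{\overline{B_R}\setminus B_\e}(\partial B_\e,\partial_\nnu\varphi_n)$. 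This replacement of $-\Delta+1$ by $-\Delta$ (with the $L^2$ term traded for a mean constraint) and of $\partial_\nnu\varphi_n$ by $\partial_\nnu P^{\varphi_n}_k$ makes the explicit constants fall out of elementary calculus rather than Bessel expansions. Your route is more direct but carries more bookkeeping, particularly in case~(i) where you must track the interplay of the $K_0$ logarithm with the outer boundary condition; the paper avoids this entirely.
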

Let  $k\geq 1$ be the vanishing order of $\varphi_n-\varphi_n(0)$  at $0$. 
We observe that the polynomial $P^{\varphi_n}_k$ is harmonic in $\R^2$ in all cases except when $k=2$ and $\varphi_n(0)\neq0$. 
More precisely, recalling that in all critical points outside the nodal set  $\varphi_n-\varphi_n(0)$ necessarily vanishes  of order $2$, we have 
\begin{equation}\label{eq:laplacianoP}
    \Delta P^{\varphi_n}_k=
    \begin{cases}
    0,&\text{if either }    \varphi_n(0)=0\text{ or }\nabla\varphi_n(0)\neq(0,0),\\
    (1-\lambda_n)\varphi_n(0),&\text{if  }    \varphi_n(0)\neq0\text{ and $0$ is a critical point of $\varphi_n$}.
    \end{cases}
\end{equation}
By \eqref{eq:polinomi}-\eqref{eq:polinomi-0} we have, for all $j\geq1$, 
\begin{equation*}
    P^{\varphi_n}_j(r\cos t,r\sin t)=r^jf_j(t),
\end{equation*}
where 
\begin{equation*}
    f_j(t)=\sum_{i=0}^j\frac{\partial^{j}\varphi_n}{\partial x_1^i
    \partial x_2^{j-i}}(0)(\cos t)^i(\sin t)^{j-i}.
\end{equation*}
Let us consider the Fourier coefficients of $f_j$:
\begin{align}
 \label{eq:fourier-a}   &a_i^j=\frac1\pi\int_0^{2\pi}f_j(t)\cos(it)\dt,\quad i\geq0,\\
  \label{eq:fourier-b}       &b_i^j=\frac1\pi\int_0^{2\pi}f_j(t)\sin(it)\dt,\quad i\geq1.
\end{align}
We observe that 
\begin{equation}\label{eq:fourier-nulli}
    a_i^j=b_i^j=0\quad\text{if }i>j,
\end{equation}
and, by the Divergence Theorem, 
\begin{equation*}
    a_0^j=\frac1\pi\int_0^{2\pi}f_j(t)\dt=\frac1{\pi j}\int_{\partial B_1}\nabla P^{\varphi_n}_j\cdot \frac{x}{|x|}\ds=
    \frac1{\pi j}\int_{B_1}\Delta  P^{\varphi_n}_j\ds.
\end{equation*}
\begin{remark}\label{rem:j=k}
In particular, for $j=k$ we have
\begin{equation}\label{eq:a0k}
    a_0^k=
    \begin{cases}
    0,&\text{if either }    \varphi_n(0)=0\text{ or }\nabla\varphi_n(0)\neq(0,0),\\
    \dfrac{1-\lambda_n}{k}\varphi_n(0),&\text{if  }    \varphi_n(0)\neq0\text{ and $0$ is a critical point of $\varphi_n$}.
    \end{cases}
\end{equation}
Furthermore, if $j=k$ and if either  $\varphi_n(0)=0$ or $\nabla\varphi_n(0)\neq(0,0)$, then, 
by \eqref{eq:laplacianoP}, $P^{\varphi_n}_k$ is harmonic and, consequently,  there exist $c_{1},c_{2}\in\R$ such that $(c_{1},c_{2})\neq(0,0)$ and 
\begin{equation*}
  P^{\varphi_n}_k(r\cos t,r\sin t)=r^k\big(c_{1}\cos(k t)+c_{2}\sin(k t)\big),\quad r\geq0,\ t\in[0,2\pi].  
\end{equation*}
 Since
\begin{equation*}
    P^{\varphi_n}_k(r\cos t,r\sin t)=r^k\sum_{i=0}^k\frac{\partial^{k}\varphi_n}{\partial x_1^i
    \partial x_2^{k-i}}(0)(\cos t)^i(\sin t)^{k-i},
\end{equation*}
 direct computations yield
\begin{equation}\label{eq_c1c2}
    c_{1}=\frac{\partial^{k}\varphi_n}{\partial x_1^k}(0)\quad\text{and}\quad c_{2}=\frac1 k\frac{\partial^{k}\varphi_n}{\partial x_1^{k-1}
    \partial x_2}(0).
\end{equation}
Therefore, if either  $\varphi_n(0)=0$ or $\nabla\varphi_n(0)\neq(0,0)$, for  $i\geq1$ we have
\begin{equation}\label{eq:aik-bik}
a^k_i=\begin{cases}
    0,&\text{if }i\neq k,\\
    \frac{\partial^{k}\varphi_n}{\partial x_1^k}(0),&\text{if }i=k,
\end{cases}\quad \quad
b^k_i=\begin{cases}
    0,&\text{if }i\neq k,\\
    \frac1 k\frac{\partial^{k}\varphi_n}{\partial x_1^{k-1}
    \partial x_2}(0),&\text{if }i=k.
\end{cases}\end{equation}
\end{remark}
For every $j\geq1$, $R>0$, and $\e\in(0,R)$, we define 
\begin{equation*}
    T_{\e,R}^j=-2\inf\left\{ \frac{1}{2}\int_{B_R\setminus B_\e}\!\!|\nabla u|^2\dx
-\int_{\partial B_\e}\!\!(\partial_\nnu P^{\varphi_n}_j)\,u\ds\colon u \in H^1(B_R\setminus \overline{B_\e}),\ \int_{B_R\setminus B_\e}\!\!u\dx=0 \right\}.
\end{equation*}
The above infimum is achieved by a unique function $W_{\e,R,j}\in H^1(B_R\setminus \overline{B_\e})$ satisfying 
\begin{equation*}
\int_{B_R\setminus B_\e}W_{\e,R,j}\dx=0, 
\end{equation*}
 and 
\begin{align}\label{eq:eq-We}
    \int_{B_R\setminus B_\e}\nabla W_{\e,R,j}\cdot\nabla v\dx&=\int_{\partial B_\e}\partial_\nnu P^{\varphi_n}_j\left(v-\frac1{|B_R\setminus B_\e|}\int_{B_R\setminus B_\e}v\dx\right)\ds\\
   \notag &=\int_{\partial B_\e}(\partial_\nnu P^{\varphi_n}_j)\,v\ds+\frac{j a_0^j \e^j}{R^2-\e^2}\int_{B_R\setminus B_\e}v\dx
\end{align}
for every $v\in H^1(B_R\setminus \overline{B_\e})$, 
i.e. $W_{\e,R,j}$ is the unique zero-average weak solution to
\begin{equation}\label{eq:Weps}
    \begin{cases}
   -\Delta W_{\e,R,j}=\frac{j a_0^j \e^j}{R^2-\e^2}, &\text{in }B_R\setminus B_\e, \\
\partial_{\nnu}W_{\e,R,j}=0, &\text{on }\partial B_R, \\
\partial_{\nnu}W_{\e,R,j}=\partial_\nnu P^{\varphi_n}_j, &\text{on }\partial B_\e.
    \end{cases}
\end{equation}
\begin{lemma}\label{l:teRj}
 For every $j\geq1$ and $R>0$ 
 \begin{equation}\label{eq:asyTeR}
  T_{\e,R}^j=
    \begin{cases}
        \frac12\pi j^2(a^j_0)^2\e^{2j}|\log\e|+o(\e^{2j}|\log\e|),&\text{if } a^j_0\neq0,\\[5pt]
        \pi j^2 \left({\displaystyle{\sum_{i=1}^j\frac{(a^j_i)^2+ (b^j_i)^2 }{i}}}\right)\e^{2j}+o(\e^{2j}),&\text{if } a^j_0=0,
    \end{cases}   
 \end{equation}
as $\e\to0$,  
 with $a^j_i,b^j_i$ being as in \eqref{eq:fourier-a}--\eqref{eq:fourier-b}.   Moreover,
 \begin{equation}\label{eq:oTeR}
     \int_{B_R\setminus B_\e}W_{\e,R,j}^2\dx=o(T_{\e,R}^j)\quad 
  \text{as }\e\to0.
 \end{equation}
\end{lemma}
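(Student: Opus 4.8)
The plan is to solve problem \eqref{eq:Weps} explicitly by separation of variables in polar coordinates $(r,t)$, exploiting that by \eqref{eq:fourier-nulli} the datum $f_j$ is a trigonometric polynomial of degree at most $j$. Since $P^{\varphi_n}_j(r\cos t,r\sin t)=r^jf_j(t)$, one has $\partial_\nnu P^{\varphi_n}_j=-j\e^{j-1}f_j(t)$ on $\partial B_\e$ (the outer normal to the annulus being $-\hat r$ there), so, writing $f_j(t)=\tfrac{a_0^j}{2}+\sum_{i=1}^j(a_i^j\cos(it)+b_i^j\sin(it))$, I would look for $W_{\e,R,j}$ in the form $w_0(r)+\sum_{i=1}^j(g_i(r)\cos(it)+h_i(r)\sin(it))$, with $w_0(r)=B+A\log r-\tfrac c4r^2$, $c=\tfrac{ja_0^j\e^j}{R^2-\e^2}$ (the quadratic term absorbing the constant source, since $\Delta(r^2)=4$ in dimension two), and each $g_i,h_i$ of the form $\alpha r^i+\gamma r^{-i}$. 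The homogeneous Neumann condition at $r=R$, the matching of $\partial_r$ against $j\e^{j-1}f_j$ at $r=\e$, and the zero-average constraint pin down all constants; since $f_j$ has only modes $i\le j$, this finite sum is, by uniqueness, exactly $W_{\e,R,j}$. Elementary computations give, as $\e\to0$, $A=\tfrac12 ja_0^j\e^j(1+o(1))$, $B=O(\e^j)$ (with $A=B=0$ and $w_0\equiv0$ when $a_0^j=0$), and, for $1\le i\le j$, $\alpha_i=\tfrac{ja_i^j\e^{i+j}}{i(\e^{2i}-R^{2i})}=-\tfrac{ja_i^j}{iR^{2i}}\e^{i+j}(1+o(1))$, $\gamma_i=\alpha_iR^{2i}=-\tfrac{ja_i^j}{i}\e^{i+j}(1+o(1))$, and likewise for the $h_i$-coefficients with $b_i^j$ in place of $a_i^j$.

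Next I would compute $T_{\e,R}^j$ from the identity obtained by testing \eqref{eq:eq-We} with $W_{\e,R,j}$ itself: the Lagrange-multiplier term drops out by the zero-average constraint, giving $T_{\e,R}^j=\int_{B_R\setminus B_\e}|\nabla W_{\e,R,j}|^2\dx=\int_{\partial B_\e}(\partial_\nnu P^{\varphi_n}_j)W_{\e,R,j}\ds=-j\e^j\int_0^{2\pi}f_j(t)\,W_{\e,R,j}(\e,t)\dt$. Expanding $W_{\e,R,j}(\e,\cdot)$ in Fourier series and using Parseval, this equals $-\pi j\e^j\big(a_0^j\,w_0(\e)+\sum_{i=1}^j(a_i^j g_i(\e)+b_i^j h_i(\e))\big)$. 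Plugging in $w_0(\e)=\tfrac12 ja_0^j\e^j\log\e+O(\e^j)$ and $g_i(\e)=\gamma_i\e^{-i}+\alpha_i\e^i=-\tfrac{ja_i^j}{i}\e^j+O(\e^{j+2i})$ (similarly for $h_i$), the radial ($i=0$) mode contributes $-\tfrac12\pi j^2(a_0^j)^2\e^{2j}\log\e+O(\e^{2j})$, coming from $A\log\e$, while the modes $1\le i\le j$ contribute $\pi j^2\sum_{i=1}^j\tfrac{(a_i^j)^2+(b_i^j)^2}{i}\e^{2j}(1+o(1))$. Since $-\log\e=|\log\e|>0$ for small $\e$, this gives exactly \eqref{eq:asyTeR}: the $\e^{2j}|\log\e|$ term is present and dominant precisely when $a_0^j\neq0$, and when $a_0^j=0$ (so $w_0\equiv0$) only the $\e^{2j}$ sum survives, with error $O(\e^{2j+2})$; the signs are such that $T_{\e,R}^j\ge0$ is manifest.

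For \eqref{eq:oTeR} I would compute $\int_{B_R\setminus B_\e}W_{\e,R,j}^2\dx$ mode by mode in the same spirit (the cross terms between the radial mode and the oscillating ones vanish by orthogonality). The radial contribution is $2\pi\int_\e^R w_0(r)^2 r\dr$; since $w_0(r)=O\big(\e^j(1+|\log r|+r^2)\big)$ and $\int_0^R(\log r)^2 r\dr<\infty$, the weight $r\dr$ tames the logarithm at the origin and this is $O(\e^{2j})$ with no extra logarithm. The mode-$i$ contribution $\pi\int_\e^R(g_i^2+h_i^2)r\dr$ has leading term $\gamma_i^2\int_\e^R r^{1-2i}\dr$, which is $O(\e^{2j+2}|\log\e|)$ if $i=1$ and $O(\e^{2j+2})$ if $i\ge2$, while the $r^i$ and cross terms are $O(\e^{2j+2i})$. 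Hence $\int_{B_R\setminus B_\e}W_{\e,R,j}^2\dx$ is $O(\e^{2j})$ when $a_0^j\neq0$ and $O(\e^{2j+2}|\log\e|)$ when $a_0^j=0$; comparing with \eqref{eq:asyTeR} (which is of order $\e^{2j}|\log\e|$, resp. $\e^{2j}$) gives $o(T_{\e,R}^j)$ in both cases.

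The step I expect to be the main obstacle is purely computational bookkeeping: one must carefully identify the dominant power of $\e$ — and whether a factor $|\log\e|$ appears — in each Fourier coefficient of $W_{\e,R,j}$, in particular in $\alpha_i,\gamma_i$, which carry the factor $(\e^{2i}-R^{2i})^{-1}$ whose expansion as $\e\to0$ must be handled, and one must keep the signs straight so that the final formula \eqref{eq:asyTeR} is consistent with $T_{\e,R}^j\ge0$. Once the explicit solution is written down, every remaining estimate reduces to elementary one-dimensional integrals.
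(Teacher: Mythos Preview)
Your proposal is correct and follows essentially the same route as the paper: the paper also expands $W_{\e,R,j}$ in Fourier series, solves the resulting ODEs for each mode explicitly (obtaining formulas equivalent to your $w_0,g_i,h_i$), evaluates $T_{\e,R}^j$ through the boundary integral $\int_{\partial B_\e}(\partial_\nnu P^{\varphi_n}_j)W_{\e,R,j}\ds$, and then estimates $\int_{B_R\setminus B_\e}W_{\e,R,j}^2$ mode by mode via Parseval, arriving at the same orders $O(\e^{2j})$ (radial part) and $O(\e^{2j+2}|\log\e|)$ or $O(\e^{2j+2})$ (oscillating modes) that you describe.
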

\begin{proof}
   For $j\geq1$ and $R>0$ fixed, let us expand $W_{\e,R,j}$ in Fourier series:
   \begin{equation*}
       W_{\e,R,j}(r\cos t,r\sin t)=\frac{\varphi_{0,\e}(r)}{2}+\sum_{i=1}^\infty\Big(\varphi_{i,\e}(r)\cos (it)+
       \psi_{i,\e}(r)\sin (it)\Big),
   \end{equation*}
where 
\begin{align*}
    &\varphi_{i,\e}(r)=\frac1\pi\int_0^{2\pi}W_{\e,R,j}(r\cos t,r\sin t)\cos(it)\dt,\quad i\geq0,\\
        &\psi_{i,\e}(r)=\frac1\pi\int_0^{2\pi}W_{\e,R,j}(r\cos t,r\sin t)\sin(it)\dt,\quad i\geq1.
\end{align*}
From \eqref{eq:Weps} and the fact that $\int_{B_R\setminus B_\e}W_{\e,R,j}\dx=0$ it follows that  
the function $\varphi_{0,\e}$ solves the problem
\begin{equation}\label{eq:equazione-phi0}
    \begin{cases}
        -\varphi_{0,\e}''(r)-\dfrac1r \varphi_{0,\e}'(r)=
        \dfrac{2ja_0^j\e^j}{R^2-\e^2},&\text{in }(\e,R),\\[5pt]
        \varphi_{0,\e}'(\e)=j\e^{j-1}a^j_0,\\[5pt]
    \varphi_{0,\e}'(R)=0,\\[5pt]
    {\displaystyle{\int_\e^R}} r\varphi_{0,\e}(r)\dr=0,
    \end{cases}
\end{equation}
while 
the functions $\varphi_{i,\e}$ and $\psi_{i,\e}$ with $i\geq1$ solve 
\begin{equation*}
    \begin{cases}
        -\varphi_{i,\e}''(r)-\dfrac1r \varphi_{i,\e}'(r)+\dfrac{i^2}{r^2}\varphi_{i,\e}(r)=0,&\text{in }(\e,R),\\[5pt]
        \varphi_{i,\e}'(\e)=j\e^{j-1}a^j_i,\\[5pt]
    \varphi_{i,\e}'(R)=0,
    \end{cases}
\end{equation*}
and 
\begin{equation*}
    \begin{cases}
        -\psi_{i,\e}''(r)-\dfrac1r \psi_{i,\e}'(r)+\dfrac{i^2}{r^2}\psi_{i,\e}(r)=0,&\text{in }(\e,R),\\[5pt]
        \psi_{i,\e}'(\e)=j\e^{j-1}b^j_i,\\[5pt]
    \psi_{i,\e}'(R)=0,
    \end{cases}
\end{equation*}
respectively. For $i\geq1$, direct computations yield
\begin{equation}\label{varphi_ie-psi_ie}
    \varphi_{i,\e}(r)=-\frac{j a^j_i\e^{i+j}}{i(R^{2i}-\e^{2i})}(r^i+R^{2i}r^{-i})
    \quad\text{and}\quad 
    \psi_{i,\e}(r)=-\frac{j b^j_i\e^{i+j}}{i(R^{2i}-\e^{2i})}(r^i+R^{2i}r^{-i}).
\end{equation}
In particular, by \eqref{eq:fourier-nulli} we have $\varphi_{i,\e}\equiv \psi_{i,\e}\equiv 0$ if $i>j$. 
Moreover, the unique solution to \eqref{eq:equazione-phi0} is
\begin{equation}\label{eq:varphi_0e}
    \varphi_{0,\e}(r)=
    \frac{j a^j_0 \e^j}{1-\big(\frac{\e}{R}\big)^2}
    \left(
    \log r -\frac{r^2}{2R^2}+\frac{1}{2}+\frac{\e^2\log \e-R^2\log R}{R^2-\e^2}+\frac{1}{4R^2}(R^2+\e^2) \right).
\end{equation}
We conclude that the unique zero-average weak solution to
\eqref{eq:Weps} is given by
\begin{equation*}
     W_{\e,R,j}(r\cos t,r\sin t)=\frac{\varphi_{0,\e}(r)}{2}+\sum_{i=1}^j\Big(\varphi_{i,\e}(r)\cos (it)+
       \psi_{i,\e}(r)\sin (it)\Big),
\end{equation*}
with $\varphi_{0,\e}$ as in \eqref{eq:varphi_0e} and 
$\varphi_{i,\e},\psi_{i,\e}$ as in 
\eqref{varphi_ie-psi_ie}.
Furthermore, by Parseval's Theorem,
\begin{align*}
    T_{\e,R}^j&=\int_{\partial B_\e}(\partial_\nnu P^{\varphi_n}_j) W_{\e,R,j}\ds\\
&=\e\int_0^{2\pi}\partial_\nnu P^{\varphi_n}_j(\e\cos t,\e\sin t) W_{\e,R,j}(\e\cos t,\e\sin t)\,dt\\
&=\e(-j\e^{j-1})\pi
\left(
\frac{a^j_0\varphi_{0,\e}(\e)}{2}+\sum_{i=1}^j(a_i^j\varphi_{i,\e}(\e)+
b_i^j\psi_{i,\e}(\e))\right).
\end{align*}
We observe that, by \eqref{eq:varphi_0e},
\begin{equation*}
\frac{a^j_0\varphi_{0,\e}(\e)}{2}\sim \frac12 j (a^j_0)^2 \e^j\log \e\quad\text{as }\e\to0,
\end{equation*}
while \eqref{varphi_ie-psi_ie} implies 
\begin{equation*}
    \sum_{i=1}^j(a_i^j\varphi_{i,\e}(\e)+
b_i^j\psi_{i,\e}(\e))\sim
-j\e^j\sum_{i=1}^j\frac{(a^j_i)^2+ (b^j_i)^2 }{i}\quad\text{as }\e\to0,
\end{equation*}
thus proving \eqref{eq:asyTeR}. 

By Parseval's Theorem we have
\begin{equation}\label{eq:parseval}
    \int_{B_R\setminus B_\e}W_{\e,R,j}^2\dx=\pi
    \int_\e^R r\left(
\frac{\varphi_{0,\e}^2(r)}{2}+\sum_{i=1}^j\Big(\varphi_{i,\e}^2(r)+ \psi_{i,\e}^2(r)\Big)\dr    
    \right).
    \end{equation}
By \eqref{eq:asyTeR} and 
\eqref{eq:varphi_0e}
\begin{align*}
    \int_\e^R r\varphi_{0,\e}^2(r)\dr&=
    \begin{cases}
        0,&\text{if } a^j_0=0,\\[5pt]
        O(\e^{2j})=o(T_{\e,R}^j),&\text{if } a^j_0\neq0,
    \end{cases}     \\
&    =o(T_{\e,R}^j)\quad\text{as }\e\to0,
\end{align*}
and, by \eqref{eq:asyTeR} and \eqref{varphi_ie-psi_ie},
\begin{align*}
   & \int_\e^R r
    (\varphi_{i,\e}^2(r)+ \psi_{i,\e}^2(r))\dr
=\frac{j^2\e^{2i+2j}((a^j_i)^2+(b^j_i)^2)}{i^2(R^{2i}-\e^{2i})^{2}}
\int_\e^R r(r^{2i}+R^{4i}r^{-2i}+2R^{2i})\dr   \\
&\quad=\frac{j^2\e^{2i+2j}((a^j_i)^2+(b^j_i)^2)}{i^2(R^{2i}-\e^{2i})^{2}}
\left(
\frac{R^{2i+2}-\e^{2i+2}}{2i+2}+R^{4i}\frac{R^{2-2i}-\e^{2-2i}}{2-2i}
+R^{2i}(R^2-\e^2)
\right)\\
&\quad=O(\e^{2j+2})=o(T_{\e,R}^j)\quad\text{as }\e\to0,
    \end{align*}
if  $i\geq2$, 
while, for $i=1$,
\begin{align*}
   & \int_\e^R r
    (\varphi_{1,\e}^2(r)+ \psi_{1,\e}^2(r))\dr
=\frac{j^2\e^{2+2j}((a^j_1)^2+(b^j_1)^2)}{(R^{2}-\e^{2})^2}
\int_\e^R r(r^{2}+R^{4}r^{-2}+2R^{2})\dr   \\
&\quad=\frac{j^2\e^{2+2j}((a^j_1)^2+(b^j_1)^2)}{(R^{2}-\e^{2})^2}
\left(
\frac{R^{4}-\e^{4}}{4}+R^{4}(\log R-\log \e)
+R^{2}(R^2-\e^2)
\right)\\
&\quad=O(\e^{2j+2}|\log\e|)=o(T_{\e,R}^j)\quad\text{as }\e\to0.
    \end{align*}
    Therefore
 \eqref{eq:oTeR} follows from \eqref{eq:parseval}.
\end{proof}
\begin{remark}\label{rem:caso-j=k}
    In view of \eqref{eq:a0k} and \eqref{eq:aik-bik}, in the case $j=k$ Lemma \ref{l:teRj}
provides the following information:
\begin{itemize}
    \item[(i)] if $\varphi_n(0)\neq0$  and $0$ is a critical point of $\varphi_n$ (hence, necessarily, $k=2$), then  
    \begin{equation*}
        T_{\e,R}^k=T_{\e,R}^2=
    \frac\pi2 (\lambda_n-1)^2(\varphi_n(0))^2\e^{4}|\log\e |+o(\e^{4}|\log\e |)\quad\text{as }\e\to0;
    \end{equation*}
\item[(ii)] if either  $\varphi_n(0)=0$ or $\nabla\varphi_n(0)\neq(0,0)$, then 
    \begin{equation*}
        T_{\e,R}^k=
    \pi k\left( 
    \bigg|\frac{\partial^{k}\varphi_n}{\partial x_1^k}(0) \bigg|^2+\frac1{k^2}\bigg|\frac{\partial^{k}\varphi_n}{\partial x_1^{k-1}
    \partial x_2}(0) \bigg|^2\right) \e^{2k}+o(\e^{2k})\quad\text{as }\e\to0.
    \end{equation*}
\end{itemize}
    \end{remark}

\begin{lemma}\label{l:confr-tor}
   For every $j\geq1$ and  $R>0$, 
   \begin{equation*}
   \mathcal{T}_{\overline{B_R}\setminus B_\e}(\partial B_\e,\partial_\nnu P^{\varphi_n}_j)=T_{\e,R}^j+o(T_{\e,R}^j)\quad\text{as $\e\to0$}.
   \end{equation*}
\end{lemma}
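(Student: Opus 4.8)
The plan is to exploit the $L^2(B_R\setminus B_\e)$-orthogonal splitting of $H^1(B_R\setminus\overline{B_\e})$ into constants and zero-average functions. Under this splitting the functional $J_{B_R,B_\e,\partial_\nnu P^{\varphi_n}_j}$ appearing in Definition \ref{def:sobolev_torsion} decouples into a functional on the zero-average subspace, which is essentially the one defining $T_{\e,R}^j$ up to a lower-order $L^2$-term, plus a scalar quadratic on the constant component, which can be minimized explicitly. The point is that the only difference between $\mathcal{T}_{\overline{B_R}\setminus B_\e}(\partial B_\e,\partial_\nnu P^{\varphi_n}_j)$ and $T_{\e,R}^j$ — the presence of the full $H^1$-energy instead of the Dirichlet one, and the absence of the zero-average constraint — interacts only through the constant direction, and contributes a term of strictly lower order.

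Concretely, for $u\in H^1(B_R\setminus\overline{B_\e})$ I would write $u=u_0+c$ with $c=\abs{B_R\setminus B_\e}^{-1}\int_{B_R\setminus B_\e}u\dx$ and $\int_{B_R\setminus B_\e}u_0\dx=0$. Since $\nabla c=0$ and $\int_{B_R\setminus B_\e}u_0\dx=0$, a direct computation gives $J_{B_R,B_\e,\partial_\nnu P^{\varphi_n}_j}(u)=A_\e(u_0)+B_\e(c)$, where
\[
A_\e(v):=\tfrac12\int_{B_R\setminus B_\e}(\abs{\nabla v}^2+v^2)\dx-\int_{\partial B_\e}v\,\partial_\nnu P^{\varphi_n}_j\ds,\qquad B_\e(c):=\tfrac12\abs{B_R\setminus B_\e}\,c^2-c\int_{\partial B_\e}\partial_\nnu P^{\varphi_n}_j\ds .
\]
From the second line of \eqref{eq:eq-We} one reads off $\int_{\partial B_\e}\partial_\nnu P^{\varphi_n}_j\ds=-\pi j a_0^j\e^j$, so minimizing the strictly convex quadratic $B_\e$ over $c\in\R$ yields $-2\inf_{c\in\R}B_\e(c)=\frac{\pi j^2(a_0^j)^2\e^{2j}}{R^2-\e^2}$. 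Since $u_0$ and $c$ vary independently, this produces the exact identity
\[
\mathcal{T}_{\overline{B_R}\setminus B_\e}(\partial B_\e,\partial_\nnu P^{\varphi_n}_j)=-2\inf_{v}A_\e(v)+\frac{\pi j^2(a_0^j)^2\e^{2j}}{R^2-\e^2},
\]
the infimum being over zero-average $v\in H^1(B_R\setminus\overline{B_\e})$.

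Next I would sandwich $-2\inf_v A_\e(v)$ between $T_{\e,R}^j$ and $T_{\e,R}^j(1+o(1))$. Setting $\tilde A_\e(v):=\tfrac12\int_{B_R\setminus B_\e}\abs{\nabla v}^2\dx-\int_{\partial B_\e}v\,\partial_\nnu P^{\varphi_n}_j\ds$, we have $A_\e(v)=\tilde A_\e(v)+\tfrac12\norm{v}_{L^2(B_R\setminus B_\e)}^2\ge\tilde A_\e(v)$ on the zero-average subspace and $T_{\e,R}^j=-2\inf_v\tilde A_\e(v)$ by definition, hence $-2\inf_vA_\e(v)\le T_{\e,R}^j$. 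For the matching lower bound I would evaluate $A_\e$ at the minimizer $W_{\e,R,j}$ of $\tilde A_\e$: testing \eqref{eq:eq-We} with $v=W_{\e,R,j}$ and using its zero average gives $\int_{B_R\setminus B_\e}\abs{\nabla W_{\e,R,j}}^2\dx=\int_{\partial B_\e}W_{\e,R,j}\,\partial_\nnu P^{\varphi_n}_j\ds=T_{\e,R}^j$, so that $A_\e(W_{\e,R,j})=-\tfrac12 T_{\e,R}^j+\tfrac12\int_{B_R\setminus B_\e}W_{\e,R,j}^2\dx$, and \eqref{eq:oTeR} makes the last term $o(T_{\e,R}^j)$. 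Thus $-2\inf_vA_\e(v)\ge-2A_\e(W_{\e,R,j})=T_{\e,R}^j-o(T_{\e,R}^j)$, and combining the two bounds, $-2\inf_vA_\e(v)=T_{\e,R}^j+o(T_{\e,R}^j)$.

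Finally I would absorb the constant-part contribution. If $a_0^j=0$ that term vanishes identically and the conclusion follows at once; if $a_0^j\neq0$, then Lemma \ref{l:teRj} (see \eqref{eq:asyTeR}) gives $T_{\e,R}^j\sim\tfrac12\pi j^2(a_0^j)^2\e^{2j}\abs{\log\e}$, whence $\frac{\pi j^2(a_0^j)^2\e^{2j}}{R^2-\e^2}=O(\e^{2j})=o(T_{\e,R}^j)$. In either case $\mathcal{T}_{\overline{B_R}\setminus B_\e}(\partial B_\e,\partial_\nnu P^{\varphi_n}_j)=T_{\e,R}^j+o(T_{\e,R}^j)$ (the degenerate case $P^{\varphi_n}_j\equiv0$ is trivial, since then both sides are zero). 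The one place that really needs an input beyond the exact decoupling is the smallness $\int_{B_R\setminus B_\e}W_{\e,R,j}^2\dx=o(T_{\e,R}^j)$ — but this is precisely \eqref{eq:oTeR} — so no new hard estimate is required; the heart of the argument is the algebraic splitting of $J_{B_R,B_\e,\partial_\nnu P^{\varphi_n}_j}$ along the constant direction, which is what keeps the $H^1$-torsional rigidity comparable to the constrained Dirichlet quantity $T_{\e,R}^j$ even when $a_0^j\neq0$.
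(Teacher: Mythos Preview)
Your argument is correct and takes a genuinely different route from the paper's own proof. The paper cross-tests the Euler--Lagrange identities \eqref{eq:variationalU} and \eqref{eq:eq-We} for the two minimizers $U_{B_R,B_\e,\partial_\nnu P^{\varphi_n}_j}$ and $W_{\e,R,j}$, obtains an exact relation between $\mathcal{T}_{\overline{B_R}\setminus B_\e}$ and $T_{\e,R}^j$ plus two remainder terms, and then bounds these remainders via Cauchy--Schwarz together with both \eqref{eq:oTeR} and Lemma~\ref{lemma:norm2} (the latter giving $\int U_{B_R,B_\e,\partial_\nnu P^{\varphi_n}_j}^2\dx=o(\mathcal{T}_{\overline{B_R}\setminus B_\e})$). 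Your approach instead works directly at the variational level: the orthogonal splitting $u=u_0+c$ makes $J_{B_R,B_\e,\partial_\nnu P^{\varphi_n}_j}$ decouple exactly, the constant direction is minimized by hand, and the zero-average part is sandwiched using only \eqref{eq:oTeR} and \eqref{eq:asyTeR}. The payoff is that you never touch the torsional minimizer $U_{B_R,B_\e,\partial_\nnu P^{\varphi_n}_j}$ itself and in particular do not need Lemma~\ref{lemma:norm2}; the argument also makes transparent that the two sources of discrepancy (the missing zero-average constraint and the $L^2$-term in the energy) each contribute a term of strictly lower order. The paper's route, on the other hand, is slightly more mechanical and would transplant more readily to situations where the constant direction does not decouple so cleanly.
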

\begin{proof}
    By \eqref{eq:variationalU} we have
\begin{equation*}
    \int_{B_R\setminus B_\e}(\nabla U_{B_R,B_\e,\partial_\nnu P^{\varphi_n}_j}\cdot\nabla W_{\e,R,j}+U_{B_R,B_\e,\partial_\nnu P^{\varphi_n}_j}W_{\e,R,j})\dx
=\int_{\partial B_\e}(\partial_\nnu P^{\varphi_n}_j) W_{\e,R,j}\ds=T_{\e,R}^j,
\end{equation*}
while \eqref{eq:eq-We} and Remark \ref{rmk:torsion_norm}   yield
\begin{align*}
    &\int_{B_R\setminus B_\e}\nabla W_{\e,R,j}\cdot\nabla U_{B_R,B_\e,\partial_\nnu P^{\varphi_n}_j}\dx\\
    &\quad =\int_{\partial B_\e}(\partial_\nnu P^{\varphi_n}_j)\,U_{B_R,B_\e,\partial_\nnu P^{\varphi_n}_j}\ds+\frac{j a_0^j \e^j}{R^2-\e^2}\int_{B_R\setminus B_\e}U_{B_R,B_\e,\partial_\nnu P^{\varphi_n}_j}\dx\\
    &\quad=\mathcal{T}_{\overline{B_R}\setminus B_\e}(\partial B_\e,\partial_\nnu P^{\varphi_n}_j)+\frac{j a_0^j \e^j}{R^2-\e^2}\int_{B_R\setminus B_\e}U_{B_R,B_\e,\partial_\nnu P^{\varphi_n}_j}\dx.
\end{align*}
  From the above identities we deduce that 
  \begin{multline*}
     \mathcal{T}_{\overline{B_R}\setminus B_\e}(\partial B_\e,\partial_\nnu P^{\varphi_n}_j)-T_{\e,R}^j\\
     =-\int_{B_R\setminus B_\e}
        U_{B_R,B_\e,\partial_\nnu P^{\varphi_n}_j}W_{\e,R,j}\dx 
        -\frac{j a_0^j \e^j}{R^2-\e^2}\int_{B_R\setminus B_\e}U_{B_R,B_\e,\partial_\nnu P^{\varphi_n}_j}\dx.
 \end{multline*}
From  Cauchy-Schwarz's inequality, \eqref{eq:oTeR}, and Lemma \ref{lemma:norm2} 
(Remark \ref{ref:rem-Teps-to-0-N=2} guaranteeing the validity of assumption $\lim_{\e\to0} \mathcal{T}_{\overline{B_R}\setminus B_\e}(\partial B_\e,\partial_\nnu P^{\varphi_n}_j)=~\!\!0$)
it follows that 
    \begin{equation*}
        \int_{B_R\setminus B_\e}
        U_{B_R,B_\e,\partial_\nnu P^{\varphi_n}_j}W_{\e,R,j}\dx=o\left(\sqrt{T_{\e,R}^j\,\mathcal{T}_{\overline{B_R}\setminus B_\e}(\partial B_\e,\partial_\nnu P^{\varphi_n}_j)}\right) 
    \end{equation*}
    as $\e\to0$. Moreover, since $\e^{2j}=O(T_{\e,R}^j)$ as $\e\to 0$ in view of \eqref{eq:asyTeR}, from Cauchy-Schwarz's inequality and Lemma \ref{lemma:norm2} we deduce that 
    \begin{align*}
        \e^j\left|\int_{B_R\setminus B_\e}U_{B_R,B_\e,\partial_\nnu P^{\varphi_n}_j}\dx\right|&\leq \e^j \sqrt{\pi(R^2-\e^2)}
        \sqrt{\int_{B_R\setminus B_\e}U^2_{B_R,B_\e,\partial_\nnu P^{\varphi_n}_j}\dx}\\    &=O\left(\sqrt{T_{\e,R}^j}\right)o\left(\sqrt{\mathcal{T}_{\overline{B_R}\setminus B_\e}(\partial B_\e,\partial_\nnu P^{\varphi_n}_j)}\right)\\
    &=o\left(\sqrt{T_{\e,R}^j\,\mathcal{T}_{\overline{B_R}\setminus B_\e}(\partial B_\e,\partial_\nnu P^{\varphi_n}_j)}\right) 
    \end{align*}
    as $\e\to0$.     
    Hence 
$\mathcal{T}_{\overline{B_R}\setminus B_\e}(\partial B_\e,\partial_\nnu P^{\varphi_n}_j)=T_{\e,R}^j+o(T_{\e,R}^j)+o(\mathcal{T}_{\overline{B_R}\setminus B_\e}(\partial B_\e,\partial_\nnu P^{\varphi_n}_j))$, i.e.
    \begin{equation*}
        (1+o(1))\mathcal{T}_{\overline{B_R}\setminus B_\e}(\partial B_\e,\partial_\nnu P^{\varphi_n}_j)=(1+o(1))T_{\e,R}^j\quad\text{as }\e\to0.
    \end{equation*}
    The lemma is thereby proved.
\end{proof}
Combining Lemmas \ref{l:confr-tor} and \eqref{eq:asyTeR} we derive the following asymptotic expansion as $\e\to0$
\begin{equation}\label{eq:asyTBr-Be}
    \mathcal{T}_{\overline{B_R}\setminus B_\e}(\partial B_\e,\partial_\nnu P^{\varphi_n}_j)=
    \begin{cases}
        \frac12\pi j^2(a^j_0)^2\e^{2j}|\log\e|+o(\e^{2j}|\log\e|),&\text{if } a^j_0\neq0,\\[5pt]
        \pi j^2 \left({\displaystyle{\sum_{i=1}^j\frac{(a^j_i)^2+ (b^j_i)^2 }{i}}}\right)\e^{2j}+o(\e^{2j}),&\text{if } a^j_0=0,
    \end{cases} 
\end{equation}
for all $j\geq1$. If $j=k$, in view of Remark \ref{rem:caso-j=k}, we have, more precisely,
\begin{enumerate}
    \item[(i)] if $\varphi_n(0)\neq0$  and $\nabla\varphi_n(0)=(0,0)$ (hence, necessarily, $k=2$), then, as $\e\to0$,  
    \begin{equation}\label{eq:asyTBr-Be-klog}
    \mathcal{T}_{\overline{B_R}\setminus B_\e}(\partial B_\e,\partial_\nnu P^{\varphi_n}_k)=\frac\pi2 (\lambda_n-1)^2(\varphi_n(0))^2\e^{4}|\log\e |+o(\e^{4}|\log\e |);
\end{equation}
 \item[(ii)] if either $\varphi_n(0)=0$  or $\nabla\varphi_n(0)\neq(0,0)$, then, as $\e\to0$,  
    \begin{equation}\label{eq:asyTBr-Be-kpow}
    \mathcal{T}_{\overline{B_R}\setminus B_\e}(\partial B_\e,\partial_\nnu P^{\varphi_n}_k)=
\pi k\left( 
    \bigg|\frac{\partial^{k}\varphi_n}{\partial x_1^k}(0) \bigg|^2+\frac1{k^2}\bigg|\frac{\partial^{k}\varphi_n}{\partial x_1^{k-1}
    \partial x_2}(0) \bigg|^2\right) \e^{2k}+o(\e^{2k}).
    \end{equation}
\end{enumerate}

\begin{lemma}\label{l:confr-om-nonom}
    For every $R>0$
    \begin{align*}
 &   \mathcal{T}_{\overline{B_R}\setminus B_\e}(\partial B_\e,\partial_\nnu \varphi_n)\\&=
    \mathcal{T}_{\overline{B_R}\setminus B_\e}(\partial B_\e,\partial_\nnu P^{\varphi_n}_k)+
    \begin{cases}
        O(\e^{9/2}|\log\e|^{3/4}),&\text{if $\varphi_n(0)\neq0$ and $\nabla\varphi_n(0)=(0,0)$},\\
        O(\e^{2k+\frac12}),&\text{if either $\varphi_n(0)=0$ or $\nabla\varphi_n(0)\neq(0,0)$},
        \end{cases}\\
    &=
    \begin{cases}
        \frac\pi2 (\lambda_n-1)^2(\varphi_n(0))^2\e^{4}|\log\e |+o(\e^{4}|\log\e |),&\text{if $\varphi_n(0)\neq0$ and $\nabla\varphi_n(0)=(0,0)$},\\[5pt]
        \pi k\Big( 
    \big|\frac{\partial^{k}\varphi_n}{\partial x_1^k}(0) \big|^2\!+\!\frac1{k^2}\big|\frac{\partial^{k}\varphi_n}{\partial x_1^{k-1}
    \partial x_2}(0) \big|^2\Big) \e^{2k}+o(\e^{2k}),&\text{if either $\varphi_n(0)=0$ or $\nabla\varphi_n(0)\neq(0,0)$},
        \end{cases}
    \end{align*}
    as $\e\to0$. 
    \end{lemma}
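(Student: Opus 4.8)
The plan is to exploit the sublinearity of the torsional rigidity in the datum $f$. By the variational characterization \eqref{eq:torsion_sup}, for each fixed $\e$ the quantity $\sqrt{\mathcal{T}_{\overline{B_R}\setminus B_\e}(\partial B_\e,f)}$ equals the operator norm of the bounded linear functional $u\mapsto\int_{\partial B_\e}uf\ds$ on $H^1(B_R\setminus\overline{B_\e})$; since $f\mapsto\big(u\mapsto\int_{\partial B_\e}uf\ds\big)$ is linear, the map $f\mapsto\sqrt{\mathcal{T}_{\overline{B_R}\setminus B_\e}(\partial B_\e,f)}$ is a seminorm on $L^2(\partial B_\e)$ and obeys the triangle inequality. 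Setting $\psi:=\varphi_n-\varphi_n(0)-P^{\varphi_n}_k$, whose normal derivative on $\partial B_\e$ equals exactly $\partial_\nnu\varphi_n-\partial_\nnu P^{\varphi_n}_k$ (the additive constant $\varphi_n(0)$ playing no role), and writing $a_\e^2:=\mathcal{T}_{\overline{B_R}\setminus B_\e}(\partial B_\e,\partial_\nnu\varphi_n)$, $b_\e^2:=\mathcal{T}_{\overline{B_R}\setminus B_\e}(\partial B_\e,\partial_\nnu P^{\varphi_n}_k)$, $\eta_\e^2:=\mathcal{T}_{\overline{B_R}\setminus B_\e}(\partial B_\e,\partial_\nnu\psi)$, the triangle inequality gives $\abs{a_\e-b_\e}\le\eta_\e$, hence $\abs{a_\e^2-b_\e^2}\le\eta_\e(2b_\e+\eta_\e)$. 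Since the asymptotics of $b_\e^2$ are already known from \eqref{eq:asyTBr-Be-klog}--\eqref{eq:asyTBr-Be-kpow}, it only remains to bound $\eta_\e$.

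To estimate $\eta_\e^2$ I would repeat the Divergence-Theorem argument of \Cref{cor:torsion-to-zero} and \Cref{lemma:big_O}. Since $\varphi_n$ is analytic in $\Omega$, $\psi$ is smooth near $0$, so for $\e$ small and $u\in H^1(B_R\setminus\overline{B_\e})$, denoting by $\Ee$ the uniform extension operator of \Cref{lemma:ext} (applied with $\Omega=B_R$, $\Sigma=B_1$),
\begin{multline*}
\abs{\int_{\partial B_\e}u\,\partial_\nnu\psi\ds}=\abs{\int_{B_\e}\big((\Delta\psi)(\Ee u)+\nabla(\Ee u)\cdot\nabla\psi\big)\dx}\\
\le C\Big(\e^{1-2/p}\norm{\Delta\psi}_{L^2(B_\e)}+\norm{\nabla\psi}_{L^2(B_\e)}\Big)\norm{u}_{H^1(B_R\setminus\overline{B_\e})},
\end{multline*}
where I used Hölder's inequality, the boundedness of $\Ee$, and the Sobolev embedding $H^1(B_R)\hookrightarrow L^p(B_R)$ for a fixed large $p<\infty$ (as in \Cref{ref:rem-Teps-to-0-N=2}); by \eqref{eq:torsion_sup} this yields $\eta_\e^2\le C\big(\e^{1-2/p}\norm{\Delta\psi}_{L^2(B_\e)}+\norm{\nabla\psi}_{L^2(B_\e)}\big)^2$. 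The two norms are controlled by the local behaviour of $\varphi_n$ at $0$: since the Taylor expansion of $\varphi_n-\varphi_n(0)$ at $0$ is $\sum_{i\ge1}P^{\varphi_n}_i$, in every case $\psi=\sum_{i\ge k+1}P^{\varphi_n}_i$ vanishes of order $\ge k+1$, so $\abs{\nabla\psi(x)}=O(\abs{x}^k)$ and $\norm{\nabla\psi}_{L^2(B_\e)}=O(\e^{k+1})$. For the Laplacian, $\Delta\psi=(1-\lambda_n)\varphi_n-\Delta P^{\varphi_n}_k$, where by \eqref{eq:laplacianoP} the polynomial $P^{\varphi_n}_k$ is harmonic unless $k=2$ and $\varphi_n(0)\ne0$, in which case $\Delta P^{\varphi_n}_k=(1-\lambda_n)\varphi_n(0)$; a short computation then shows that $\Delta\psi$ vanishes of order $\ge k$ at $0$ whenever $\varphi_n(0)=0$ or $0$ is a critical point of $\varphi_n$, and is merely bounded at $0$ in the remaining case $\varphi_n(0)\ne0$ and $\nabla\varphi_n(0)\ne(0,0)$ (where $k=1$). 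Consequently $\norm{\Delta\psi}_{L^2(B_\e)}=O(\e^{k+1})$ in the first situation and $O(\e)$ in the second, so that $\eta_\e=O(\e^{k+1})$ in the first and $\eta_\e=O(\e^{2-2/p})$ in the second.

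Finally I would plug these bounds into $\abs{a_\e^2-b_\e^2}\le\eta_\e(2b_\e+\eta_\e)$ and use \eqref{eq:asyTBr-Be-klog}--\eqref{eq:asyTBr-Be-kpow}. If $\varphi_n(0)\ne0$ and $\nabla\varphi_n(0)=(0,0)$, then $k=2$, $\eta_\e=O(\e^3)$ and, by \eqref{eq:asyTBr-Be-klog}, $b_\e=\Theta(\e^2\sqrt{\abs{\log\e}})$, whence $\abs{a_\e^2-b_\e^2}=O(\e^5\sqrt{\abs{\log\e}})$, which in particular is $O(\e^{9/2}\abs{\log\e}^{3/4})$ and $o(\e^4\abs{\log\e})$; combined with \eqref{eq:asyTBr-Be-klog} this gives the first stated expansion. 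If instead $\varphi_n(0)=0$ or $\nabla\varphi_n(0)\ne(0,0)$, then by \eqref{eq:asyTBr-Be-kpow} $b_\e=\Theta(\e^k)$, and either $\eta_\e=O(\e^{k+1})$ (when $\varphi_n(0)=0$), giving $\abs{a_\e^2-b_\e^2}=O(\e^{2k+1})$, or $k=1$ and $\eta_\e=O(\e^{2-2/p})$ (when $\varphi_n(0)\ne0$ and $\nabla\varphi_n(0)\ne(0,0)$), giving $\abs{a_\e^2-b_\e^2}=O(\e^{3-2/p})$; in both cases, choosing $p$ large, the error is $o(\e^{2k})$ and can be taken $O(\e^{2k+1/2})$, so \eqref{eq:asyTBr-Be-kpow} yields the second stated expansion. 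The main obstacle is the last case, $0$ a regular point outside the nodal set ($k=1$): there $\Delta\psi(0)\ne0$, so the crude bound $\norm{\Ee u}_{L^2(B_\e)}\le C\norm{u}_{H^1}$ only gives $\eta_\e=O(\e)$, of the same order as $b_\e$, and one must extract the extra (almost) power of $\e$ coming from $\abs{B_\e}^{1/2-1/p}$ through the Sobolev embedding; a minor additional bookkeeping consists in verifying the vanishing order of $\Delta\psi$ at $0$ according to whether $0$ lies on the nodal set, on the set of critical points, or elsewhere.
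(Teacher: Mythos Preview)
Your argument is correct and takes a genuinely different route from the paper's. The paper works directly with the torsion functions: it introduces $\Psi_\e:=U_{B_R,B_\e,\partial_\nnu\varphi_n}-U_{B_R,B_\e,\partial_\nnu P^{\varphi_n}_k}-U_{B_R,B_\e,\partial_\nnu P^{\varphi_n}_{k+1}}$, tests the equation it satisfies, and uses a scaled trace inequality $\int_{\partial B_\e}u^2\ds\le C_R\e^{-1}\|u\|_{H^1}^2$ together with the pointwise bound $|\partial_\nnu(\varphi_n-P_k^{\varphi_n}-P_{k+1}^{\varphi_n})|=O(\e^{k+1})$ on $\partial B_\e$ to obtain $\|\Psi_\e\|_{H^1}=O(\e^{k+1})$; the difference of torsional rigidities is then recovered via $\mathcal{T}=\|U\|_{H^1}^2$ and the identity $\|U_1\|^2-\|U_2\|^2=(U_1-U_2,U_1+U_2)$, which in turn requires the already-proved asymptotics of $\mathcal{T}_{\overline{B_R}\setminus B_\e}(\partial B_\e,\partial_\nnu P^{\varphi_n}_{k+1})$ from \eqref{eq:asyTBr-Be}.

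Your route is more elementary in spirit: observing that $f\mapsto\sqrt{\mathcal{T}_{\overline{B_R}\setminus B_\e}(\partial B_\e,f)}$ is the dual norm of a linear map (hence subadditive), you reduce everything to estimating the torsional rigidity of the single remainder $\partial_\nnu(\varphi_n-P^{\varphi_n}_k)$, and you do this via the Divergence Theorem on $B_\e$ as in \Cref{lemma:big_O} and \Cref{ref:rem-Teps-to-0-N=2}, rather than through the trace inequality. The price you pay for subtracting one Taylor term fewer is that in the case $k=1$, $\varphi_n(0)\neq0$ the Laplacian $\Delta\psi$ does not vanish at the origin, and you must squeeze out the missing half-power of $\e$ through the Sobolev embedding $H^1\hookrightarrow L^p$; this works and gives exactly the stated $O(\e^{2k+1/2})$ upon choosing $p\ge4$. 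In the other cases your bound $\eta_\e=O(\e^{k+1})$ is as sharp as the paper's, and in fact yields the slightly stronger error $O(\e^5|\log\e|^{1/2})$ (resp.\ $O(\e^{2k+1})$) rather than $O(\e^{9/2}|\log\e|^{3/4})$ (resp.\ $O(\e^{2k+1/2})$). Your approach has the advantage of not needing the auxiliary asymptotics for $\mathcal{T}_{\overline{B_R}\setminus B_\e}(\partial B_\e,\partial_\nnu P^{\varphi_n}_{k+1})$; the paper's has the advantage of treating all cases uniformly without the $L^p$ interpolation.
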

\begin{proof}
We first observe that, if $N=2$, by Lemma \ref{lemma:ext} and Sobolev trace theorems,
there exists $C_R>0$ (depending on $R$ but independent of $\e$) such that 
\begin{equation}\label{eq:trace-eps}
    \int_{\partial B_\e} u^2\ds\leq \frac{C_R}{\e}\|u\|^2_{H^1(B_R\setminus \overline{B_\e})}
    \quad\text{for all }u\in H^1(B_R\setminus \overline{B_\e}).
\end{equation}  
  Let $\Psi_\e=U_{B_R,B_\e,\partial_\nnu \varphi_n}-U_{B_R,B_\e,\partial_\nnu P^{\varphi_n}_k}-U_{B_R,B_\e,\partial_\nnu P^{\varphi_n}_{k+1}}$.    From \eqref{eq:variationalU} it follows that 
\begin{equation*}
    \int_{B_R\setminus B_\e}(\nabla \Psi_\e\cdot\nabla v+\Psi_\e v)\dx
=\int_{\partial B_\e}v\,\partial_\nnu (\varphi_n- P^{\varphi_n}_k- P^{\varphi_n}_{k+1}) \ds
\end{equation*}  
for every $v\in H^1(B_R\setminus \overline{B_\e})$, so that \eqref{eq:trace-eps} yields
\begin{align*}
          \int_{B_R\setminus B_\e}(|\nabla \Psi_\e|^2+\Psi_\e^2)\dx
&=\int_{\partial B_\e}\Psi_\e\,\partial_\nnu (\varphi_n- P^{\varphi_n}_k- P^{\varphi_n}_{k+1}) \ds
\leq \mathop{\rm const} \e^{k+\frac32}
\sqrt{\int_{\partial B_\e} \Psi_\e^2\ds}\\
&\leq \mathop{\rm const} \e^{k+1}\|\Psi_\e\|_{H^1(B_R\setminus \overline{B_\e})}
\end{align*}
for some $\mathop{\rm const}>0$ independent of $\e$ which varies from line to line. Hence
\begin{equation}\label{eq:prima-st-Phi-eps}
 \|\Psi_\e\|_{H^1(B_R\setminus \overline{B_\e})}=O(\e^{k+1})\quad\text{as }\e\to0.   
\end{equation}
From \eqref{eq:prima-st-Phi-eps}, Remark \ref{rmk:torsion_norm}, \eqref{eq:asyTBr-Be}, \eqref{eq:asyTBr-Be-klog}, and \eqref{eq:asyTBr-Be-kpow}  it follows that 
\begin{align}
    &\label{eq:stimaU}\|U_{B_R,B_\e,\partial_\nnu \varphi_n}\|_{H^1(B_R\setminus \overline{B_\e})}\\
    &\notag\quad \leq \|\Psi_\e\|_{H^1(B_R\setminus \overline{B_\e})}+
    \|U_{B_R,B_\e,\partial_\nnu P^{\varphi_n}_k}\|_{H^1(B_R\setminus \overline{B_\e})}+\|U_{B_R,B_\e,\partial_\nnu P^{\varphi_n}_{k+1}}\|_{H^1(B_R\setminus \overline{B_\e})}\\
    &\notag\quad =
    \begin{cases}
        O(\e^2|\log\e|^{1/2}),&\text{if $\varphi_n(0)\neq0$ and $\nabla\varphi_n(0)=(0,0)$},\\
        O(\e^k),&\text{if either $\varphi_n(0)=0$ or $\nabla\varphi_n(0)\neq(0,0)$},
    \end{cases}
\end{align}
as $\e\to0$.
Cauchy-Schwarz's inequality and estimates 
\eqref{eq:prima-st-Phi-eps}--\eqref{eq:stimaU}, 
\eqref{eq:asyTBr-Be}, \eqref{eq:asyTBr-Be-klog}, and \eqref{eq:asyTBr-Be-kpow}  imply 
\begin{align*}
    \|U_{B_R,B_\e,\partial_\nnu \varphi_n}&-U_{B_R,B_\e,\partial_\nnu P^{\varphi_n}_k}\|_{H^1(B_R\setminus \overline{B_\e})}^2-\|U_{B_R,B_\e,\partial_\nnu P^{\varphi_n}_{k+1}}\|_{H^1(B_R\setminus \overline{B_\e})}^2\\
    \notag&=(\Psi_\e,U_{B_R,B_\e,\partial_\nnu \varphi_n}-U_{B_R,B_\e,\partial_\nnu P^{\varphi_n}_k}+U_{B_R,B_\e,\partial_\nnu P^{\varphi_n}_{k+1}})_{H^1(B_R\setminus \overline{B_\e})}\\
    &\leq \|\Psi_\e\|_{H^1(B_R\setminus \overline{B_\e})}
    \Big(\|U_{B_R,B_\e,\partial_\nnu \varphi_n}\|_{H^1(B_R\setminus \overline{B_\e})}+\|U_{B_R,B_\e,\partial_\nnu P^{\varphi_n}_k}\|_{H^1(B_R\setminus \overline{B_\e})}\\
    &\hskip5cm
    +\|U_{B_R,B_\e,\partial_\nnu P^{\varphi_n}_{k+1}}\|_{H^1(B_R\setminus \overline{B_\e})}\Big)\\
    &=\begin{cases}
        O(\e^5|\log\e|^{1/2}),&\text{if $\varphi_n(0)\neq0$ and $\nabla\varphi_n(0)=(0,0)$},\\
        O(\e^{2k+1}),&\text{if either $\varphi_n(0)=0$ or $\nabla\varphi_n(0)\neq(0,0)$},
    \end{cases}
\end{align*}
as $\e\to0$. Hence, in view of \eqref{eq:asyTBr-Be},
\begin{multline}
    \label{eq:stima-diff1}
    \|U_{B_R,B_\e,\partial_\nnu \varphi_n}-U_{B_R,B_\e,\partial_\nnu P^{\varphi_n}_k}\|_{H^1(B_R\setminus \overline{B_\e})}
    \\=\begin{cases}
        O(\e^{5/2}|\log\e|^{1/4}),&\text{if $\varphi_n(0)\neq0$ and $\nabla\varphi_n(0)=(0,0)$},\\
        O(\e^{k+\frac12}),&\text{if either $\varphi_n(0)=0$ or $\nabla\varphi_n(0)\neq(0,0)$},
    \end{cases}
\end{multline}
as $\e\to0$. 
From  Remark \ref{rmk:torsion_norm}, Cauchy-Schwarz's inequality, \eqref{eq:stima-diff1}, \eqref{eq:stimaU}, \eqref{eq:asyTBr-Be-klog}, and
\eqref{eq:asyTBr-Be-kpow} it follows that 
\begin{align*}
\mathcal{T}_{\overline{B_R}\setminus B_\e}&(\partial B_\e,\partial_\nnu \varphi_n)-
    \mathcal{T}_{\overline{B_R}\setminus B_\e}(\partial B_\e,\partial_\nnu P^{\varphi_n}_k)\\
    &   =\|U_{B_R,B_\e,\partial_\nnu \varphi_n}\|_{H^1(B_R\setminus \overline{B_\e})}^2-\|U_{B_R,B_\e,\partial_\nnu P^{\varphi_n}_{k}}\|_{H^1(B_R\setminus \overline{B_\e})}^2 \\
   &=(U_{B_R,B_\e,\partial_\nnu \varphi_n}-U_{B_R,B_\e,\partial_\nnu P^{\varphi_n}_{k}},U_{B_R,B_\e,\partial_\nnu \varphi_n}+U_{B_R,B_\e,\partial_\nnu P^{\varphi_n}_{k}})_{H^1(B_R\setminus \overline{B_\e})}\\
&   =\begin{cases}
        O(\e^{9/2}|\log\e|^{3/4}),&\text{if $\varphi_n(0)\neq0$ and $\nabla\varphi_n(0)=(0,0)$},\\
        O(\e^{2k+\frac12}),&\text{if either $\varphi_n(0)=0$ or $\nabla\varphi_n(0)\neq(0,0)$},
        \end{cases}
\end{align*}
as $\e\to0$, thus completing the proof in view of \eqref{eq:asyTBr-Be-klog} and
\eqref{eq:asyTBr-Be-kpow}.
\end{proof}

\begin{proof}[Proof of Proposition \ref{p:exp-2D}]
    Since $0\in\Omega$, there exist $R_1,R_2>0$ such that $B_{R_1}\subset\Omega\subset B_{R_2}$. From Corollary \ref{c:monotonicity-domain} it follows that 
     \begin{equation*}
		\mathcal{T}_{\overline{B_{R_2}}\setminus B_\e}(\partial B_\e,\partial_\nnu \varphi_n)
            \leq\mathcal{T}_{\overline{\Omega}\setminus B_\e}(\partial B_\e,\partial_\nnu \varphi_n)\leq 
    		\mathcal{T}_{\overline{B_{R_1}}\setminus B_\e}(\partial B_\e,\partial_\nnu \varphi_n),
      \end{equation*}
     so that the conclusion follows from  Lemma \ref{l:confr-om-nonom}.
\end{proof}

\begin{proposition}\label{p:phi-dnuphi-D2}
    Let $N=2$.
\begin{itemize}
   \item[\rm (i)] 
If     $0\in\Omega\setminus \mathrm{Sing}\,(\varphi_n)$, then 
   \begin{equation*}
       \int_{B_\e}\left(\abs{\nabla\varphi_n}^2-(\lambda_n
    -1)\varphi_n^2 \right)\dx =    \pi\e^2\Big(
       |\nabla\varphi_n(0)|^2-(\lambda_n-1)|\varphi_n(0)|^2\Big)+o(\e^2)
   \end{equation*}
   as $\e\to0$.
   \item[\rm (ii)] 
If     $0\in\mathrm{Sing}\,(\varphi_n)$, then 
    \begin{equation*}
        \int_{B_\e}\left(\abs{\nabla\varphi_n}^2-(\lambda_n 
    -1)\varphi_n^2 \right)\dx = k\pi \e^{2k}\left( 
    \bigg|\frac{\partial^{k}\varphi_n}{\partial x_1^k}(0) \bigg|^2+\frac1{k^2}\bigg|\frac{\partial^{k}\varphi_n}{\partial x_1^{k-1}
    \partial x_2}(0) \bigg|^2\right)+o(\e^{2k})
    \end{equation*}
    as $\e\to0$,
where $k\geq2$ is the vanishing order at $0$ of $\varphi_n-\varphi_n(0)$.
\end{itemize}
\end{proposition}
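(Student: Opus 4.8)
The plan is to reduce both parts of \Cref{p:phi-dnuphi-D2} to a Taylor expansion of $\varphi_n$ at the origin — exactly the argument already carried out in the proof of \Cref{thm:blowup_intr} for general $N$ and $\Sigma$ — and then, in the two-dimensional case $\Sigma=B_1$, to evaluate the resulting leading coefficient explicitly by means of the Fourier representation of $P_k^{\varphi_n}$ recorded in Remark \ref{rem:j=k}.

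For part (i), since $0\in\Omega\setminus\mathrm{Sing}\,(\varphi_n)$ the eigenfunction $\varphi_n$ is smooth near $0$, so that $\varphi_n(x)=\varphi_n(0)+O(|x|)$ and $\nabla\varphi_n(x)=\nabla\varphi_n(0)+O(|x|)$ as $x\to0$. Squaring these expansions, integrating over $B_\e$ and using $|B_\e|=\pi\e^2$ (here $N=2$), all remainders are $O(\e^3)=o(\e^2)$, which yields the stated expansion.

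For part (ii), $0\in\mathrm{Sing}\,(\varphi_n)$ forces $\varphi_n(0)=0$, so $k\geq 2$ is the vanishing order of $\varphi_n$ itself and, by \Cref{prop:asympt_phi} and Remark \ref{rem:k12}, $P_k^{\varphi_n}$ is harmonic and homogeneous of degree $k$. Arguing exactly as in the proof of \Cref{thm:blowup_intr}-(ii), one has $\varphi_n(x)=P_k^{\varphi_n}(x)+O(|x|^{k+1})$ and $\nabla\varphi_n(x)=\nabla P_k^{\varphi_n}(x)+O(|x|^{k})$, whence $\int_{B_\e}\varphi_n^2\dx=O(\e^{2k+2})=o(\e^{2k})$ and, expanding $|\nabla\varphi_n|^2$ and bounding the cross term via $|\nabla P_k^{\varphi_n}(x)|=O(|x|^{k-1})$, $\int_{B_\e}|\nabla\varphi_n|^2\dx=\int_{B_\e}|\nabla P_k^{\varphi_n}|^2\dx+O(\e^{2k+1})$. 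By homogeneity and a change of variables, $\int_{B_\e}|\nabla P_k^{\varphi_n}|^2\dx=\e^{2k}\int_{B_1}|\nabla P_k^{\varphi_n}|^2\dx$, so that $\int_{B_\e}\bigl(|\nabla\varphi_n|^2-(\lambda_n-1)\varphi_n^2\bigr)\dx=\e^{2k}\int_{B_1}|\nabla P_k^{\varphi_n}|^2\dx+o(\e^{2k})$.

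It then remains to compute $\int_{B_1}|\nabla P_k^{\varphi_n}|^2\dx$. Since $P_k^{\varphi_n}$ is harmonic, the Divergence Theorem together with the Euler identity $\nabla P_k^{\varphi_n}\cdot x=k\,P_k^{\varphi_n}$ gives $\int_{B_1}|\nabla P_k^{\varphi_n}|^2\dx=k\int_{\partial B_1}(P_k^{\varphi_n})^2\ds$, just as in the proof of \Cref{thm:spher}. By Remark \ref{rem:j=k} (applicable since $\varphi_n(0)=0$), in polar coordinates $P_k^{\varphi_n}(\cos t,\sin t)=c_1\cos(kt)+c_2\sin(kt)$ with $c_1=\frac{\partial^{k}\varphi_n}{\partial x_1^k}(0)$ and $c_2=\frac1k\frac{\partial^{k}\varphi_n}{\partial x_1^{k-1}\partial x_2}(0)$ as in \eqref{eq_c1c2}; orthogonality of the trigonometric system then yields $\int_{\partial B_1}(P_k^{\varphi_n})^2\ds=\pi(c_1^2+c_2^2)$, hence $\int_{B_1}|\nabla P_k^{\varphi_n}|^2\dx=k\pi(c_1^2+c_2^2)$. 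Substituting the values of $c_1$ and $c_2$ produces the formula in (ii). No step is genuinely difficult; the only point requiring a little care is that the cross term in the expansion of $|\nabla\varphi_n|^2$ is $o(\e^{2k})$, which rests on the sharp analytic remainder $\nabla\varphi_n-\nabla P_k^{\varphi_n}=O(|x|^{k})$ rather than on a mere $o(|x|^{k-1})$ estimate.
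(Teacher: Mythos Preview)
Your proof is correct and follows essentially the same approach as the paper: for (i) both reduce to the Taylor expansion already used in \eqref{eq:bl_intr_2}, and for (ii) both arrive at $k\pi\e^{2k}(c_1^2+c_2^2)$ via the Fourier representation of $P_k^{\varphi_n}$ from Remark~\ref{rem:j=k}. The only minor difference is in the order of operations for (ii): the paper first converts the full volume integral into the boundary integral $-\int_{\partial B_\e}\varphi_n\,\partial_\nnu\varphi_n\ds$ using the equation $-\Delta\varphi_n=(\lambda_n-1)\varphi_n$ and then expands, whereas you expand the two volume terms separately and then apply the Divergence Theorem to the harmonic polynomial $P_k^{\varphi_n}$ --- an equivalent computation.
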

\begin{proof}
If $0\not\in \mathrm{Sing}\,(\varphi_n)$, we can argue as in \eqref{eq:bl_intr_2} to deduce (i).

    Let $0\in \mathrm{Sing}\,(\varphi_n)$. In this case $P^{\varphi_n}_k(r\cos t,r\sin t)=r^k\big(c_{1}\cos(k t)+c_{2}\sin(k t)\big)$ with $c_1,c_2$ as in \eqref{eq_c1c2}, see Remark \ref{rem:j=k}.
    Then 
    \begin{multline*}
    \int_{B_\e}\left(\abs{\nabla\varphi_n}^2-(\lambda_n 
    -1)\varphi_n^2 \right)\dx =-
        \int_{\partial B_\e}\varphi_n\partial_\nnu\varphi_n\ds=-
        \int_{\partial B_\e}P^{\varphi_n}_k\partial_\nnu P^{\varphi_n}_k\ds+ o(\e^{2k})\\
        =k \e^{2k}\int_0^{2\pi}(c_{1}\cos(k t)+c_{2}\sin(k t))^2
        \,\mathrm{d}t+ o(\e^{2k})=k \pi \e^{2k}(c_1^2+c_2^2)+ o(\e^{2k}) \quad\text{as }\e\to0, 
    \end{multline*}
    thus proving (ii).
\end{proof}
We are now in position to prove Theorem \ref{thm:spher-Dim2}.
\begin{proof}[Proof of Theorem \ref{thm:spher-Dim2}]
By translation, it is not restrictive to assume $x_0=0$. The conclusion  follows from Theorem \ref{thm:main1}, expanding  the torsional rigidity $\mathcal{T}_{\overline{\Omega}\setminus B_\e}(\partial B_\e,\partial_\nnu \varphi_n)$ as in Proposition~\ref{p:exp-2D} and $\int_{B_\e}\left(\abs{\nabla\varphi_n}^2-(\lambda_n 
    -1)\varphi_n^2 \right)\dx$ as in Proposition \ref{p:phi-dnuphi-D2}.   
\end{proof}

\begin{example}
We conclude this section with an example, in which the hole is excised from a disk. To this end, let us take $\Omega = B_2 \subset \R^2$. 
It is well known (see, e.g., \cite{grebenkov})  that the eigenvalues of the unperturbed Neumann problem \eqref{eq:P0}  are
\[
\lambda_{nk}=\frac{\alpha_{nk}^2}{4}+1,
\]
$\alpha_{nk}$ being the positive roots, enumerated by $k$, of $J_n'(z)$, where $J_n(z)$ is the Bessel function of the first kind of order $n$. These eigenvalues are all simple for $n=0$. In this case,  the eigenfunctions read
\[
\varphi_{k}(r,\theta) = J_0\left(\alpha_{0k} \frac r 2\right). 
\]
Therefore, the $2$-dimensional analogue of the interface $\Gamma$  introduced in Remark \ref{rem:Gamma} is characterized by the equation 
\[
2 J_1^2\left(\alpha_{0k} \frac r 2\right) - J_0^2\left(\alpha_{0k} \frac r 2\right) = 0.
\]
Relying again on the computational software Mathematica\texttrademark \,we can plot the interface (in blue), along with the nodal lines of $\varphi_k$ (in green), for the cases
\[
\alpha_{01} \approx 3.831, \qquad \alpha_{02} \approx 7.016.  
\]
The results can be seen in Figure \ref{img4}.

\begin{figure}[ht]
\begin{center}
\includegraphics[width=6cm]{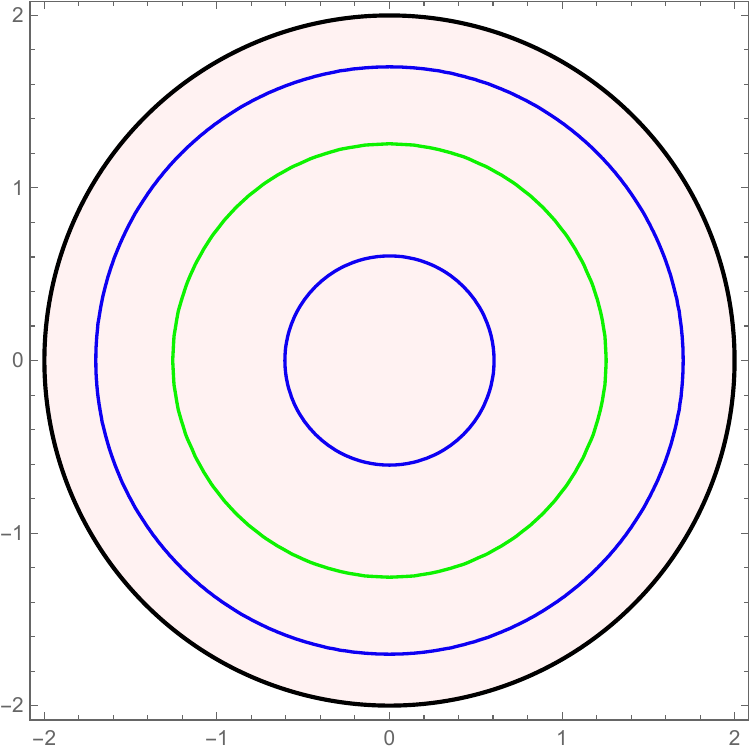} 
\quad 
\includegraphics[width=6cm]{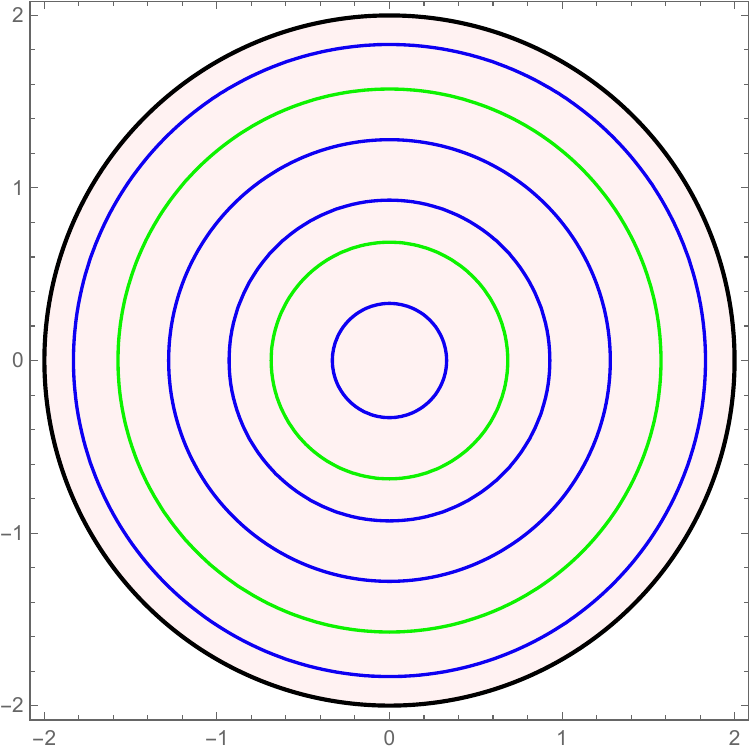}
\caption{Interface $\Gamma$ and nodal lines of the eigenfunction for the cases $\alpha_{01}$ (left) and $\alpha_{02}$ (right).}
\label{img4}
\end{center}
\end{figure}

\end{example}


\section{Appendix}

We recall here a known result about approximation of small eigenvalues of linear operators. This lemma, originally proved by Y. Colin de Verdiére in \cite{ColindeV1986} and then revisited in \cite{Courtois1995} and \cite{ALM2022}, also applies to multiple eigenvalues. We present here a simplified version 
applicable to the case of simple eigenvalues
and  provide a short proof for the readers' convenience.

\begin{lemma}[Lemma on small eigenvalues]\label{lemma:sm_eig}
    Let $(\mathcal{H},(\cdot,\cdot))$ be a real Hilbert space, $\mathcal{D}\sub\mathcal{H}$ a  subspace,  and $q\colon \mathcal{D}\times \mathcal{D}\to \R$ a bilinear symmetric form. 
    Let 
    \begin{enumerate}
    \item[\rm (i)]$\lambda\in\R$ and $\phi\in\mathcal{D}$ be such that 
        \[
            \norm{\phi}=1\quad\text{and}\quad q(\phi,v)=\lambda(\phi,v)\quad\text{for all }v\in\mathcal{D},
        \]
        where $\|\cdot\|=\sqrt{(\cdot,\cdot)}$ denotes the norm associated to the scalar product;
        \item[\rm (ii)] 
 $f\in\mathcal{D}$ be such that  $\norm{f}=1$.
    \end{enumerate}
    Let us  assume that $\{\phi\}^\perp=H_1\oplus H_2$ for some subspaces $H_1,H_2$ mutually orthogonal such that $H_1\subset\mathcal D$, 
    \begin{align}
    \label{eq:ortho-q}
&        q(v_1,v_2)=0\quad\text{for all $v_1\in H_1$ and $v_2\in H_2\cap\mathcal D$},\\    
    \label{eq:sm_eig_hp1}
&        \gamma_1:=\inf\left\{ \frac{\abs{q(v,v)}}{\norm{v}^2}\colon v\in H_1\setminus\{0\}\right\}>0,\\
\label{eq:sm_eig_hp1-bis}
&        \gamma_2:=\inf\left\{ \frac{\abs{q(v,v)}}{\norm{v}^2}\colon v\in (H_2\cap\mathcal D)\setminus\{0\} \right\}>0,
\end{align}
and 
    \begin{equation}\label{eqLdef-delta}
        \delta:=\sup\left\{ \frac{\abs{q(f,v)}}{\norm{v}}\colon v\in\mathcal{D}\setminus\{0\}\right\}<+\infty.
    \end{equation}
Then 
    \begin{equation}\label{eq:sm_eig_th1}
        \norm{f-\Pi f}\leq \frac{\sqrt 2\,\delta}{\gamma},
    \end{equation}
where $\gamma:=\min\{\gamma_1,\gamma_2\}$ and $\Pi$ denotes the orthogonal projection onto 
$\Span\{\phi\}$, i.e. 
    \begin{align*}
        \Pi\colon \mathcal{H}&\to \Span\{\phi\} \\
        v &\mapsto (\phi,v)\,\phi.
    \end{align*}
     Finally, if $\xi:=q(f,f)$, then 
    \begin{equation}\label{eq:sm_eig_th2}
        \abs{\lambda-\xi}\leq  2|\lambda|\,\frac{\delta^2}{\gamma^2}+2\frac{\delta^2}{\gamma}.
    \end{equation}
\end{lemma}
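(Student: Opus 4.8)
The plan is to decompose $f$ along the orthogonal splitting $\mathcal H=\Span\{\phi\}\oplus H_1\oplus H_2$ and estimate the two non-projection components separately using the coercivity hypotheses \eqref{eq:sm_eig_hp1}--\eqref{eq:sm_eig_hp1-bis}, while controlling the right-hand side through \eqref{eqLdef-delta}. Write $f=\Pi f+w_1+w_2$ with $w_1\in H_1$, $w_2\in H_2$. The first subtlety is that $w_2$ need not lie in $\mathcal D$, so $q(f,w_2)$ may be undefined; I would handle this by approximating $w_2$ by elements of $H_2\cap\mathcal D$ (which is dense in $H_2$ in the relevant sense since $H_1\subset\mathcal D$ and $\mathcal D\supseteq H_1\oplus(H_2\cap\mathcal D)$), establish the bound on $H_2\cap\mathcal D$, and pass to the limit. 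For notational cleanliness one may first prove the estimate assuming $w_2\in\mathcal D$ and then remove this by density.

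The core computation: since $q(\phi,v)=\lambda(\phi,v)$ and $v_1\in H_1\subset\{\phi\}^\perp$, we have $q(\phi,w_1)=0$; likewise $q(\phi,w_2)=0$ when $w_2\in\mathcal D$. Using \eqref{eq:ortho-q}, $q(w_1,w_2)=0$. Hence
\[
q(f,w_1)=q(w_1,w_1),\qquad q(f,w_2)=q(w_2,w_2).
\]
By \eqref{eq:sm_eig_hp1}--\eqref{eq:sm_eig_hp1-bis} and the definition of $\delta$ in \eqref{eqLdef-delta},
\[
\gamma_1\|w_1\|^2\le |q(w_1,w_1)|=|q(f,w_1)|\le\delta\|w_1\|,\qquad
\gamma_2\|w_2\|^2\le |q(f,w_2)|\le\delta\|w_2\|,
\]
so $\|w_1\|\le\delta/\gamma_1$ and $\|w_2\|\le\delta/\gamma_2$. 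Since $f-\Pi f=w_1+w_2$ is an orthogonal sum, $\|f-\Pi f\|^2=\|w_1\|^2+\|w_2\|^2\le\delta^2(\gamma_1^{-2}+\gamma_2^{-2})\le 2\delta^2/\gamma^2$, which is \eqref{eq:sm_eig_th1}.

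For \eqref{eq:sm_eig_th2}, write $\xi=q(f,f)=q(f,\Pi f)+q(f,w_1)+q(f,w_2)$. Since $\Pi f=(\phi,f)\phi$, we have $q(f,\Pi f)=(\phi,f)\,q(f,\phi)=(\phi,f)\,\lambda(\phi,f)=\lambda(\phi,f)^2=\lambda\|\Pi f\|^2=\lambda(1-\|f-\Pi f\|^2)$, using $\|f\|=1$. Combining with $q(f,w_i)=q(w_i,w_i)$ and $|q(w_i,w_i)|\le\delta\|w_i\|\le\delta^2/\gamma_i$,
\[
|\lambda-\xi|\le|\lambda|\,\|f-\Pi f\|^2+|q(f,w_1)|+|q(f,w_2)|
\le 2|\lambda|\frac{\delta^2}{\gamma^2}+\frac{\delta^2}{\gamma_1}+\frac{\delta^2}{\gamma_2}
\le 2|\lambda|\frac{\delta^2}{\gamma^2}+2\frac{\delta^2}{\gamma}.
\]
The main obstacle is the domain issue flagged above — ensuring $q(f,w_2)$ makes sense and that the identity $q(f,w_2)=q(w_2,w_2)$ survives the passage to the limit; once one fixes a version of the argument on $H_2\cap\mathcal D$ and invokes density together with continuity of $q$ in the graph norm on $\mathcal D$, the rest is the elementary chain of inequalities above.
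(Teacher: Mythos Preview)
Your argument is essentially the paper's proof: the same orthogonal decomposition $f=\Pi f+w_1+w_2$, the same identities $q(f,w_i)=q(w_i,w_i)$, and the same chain of inequalities for both \eqref{eq:sm_eig_th1} and \eqref{eq:sm_eig_th2}. The one point where you diverge is the ``domain issue'' for $w_2$: the density-plus-continuity fix you sketch is not actually supported by the hypotheses (no topology on $\mathcal D$ or continuity of $q$ is assumed), but it is also unnecessary. Since $f\in\mathcal D$, $\Pi f=(\phi,f)\phi\in\mathcal D$, and $w_1\in H_1\subset\mathcal D$, you get $w_2=f-\Pi f-w_1\in\mathcal D$ directly, hence $w_2\in H_2\cap\mathcal D$; this is exactly how the paper handles it, and with that observation your proof goes through verbatim.
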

\begin{proof}
Let us denote
    \begin{equation*}
        \mathsf{N}f:=f-\Pi f.
    \end{equation*}
 We observe that $\mathsf{N}f$ is orthogonal to $\phi$, i.e.
    \begin{equation*}
        (\phi,\mathsf{N}f)=0,
    \end{equation*}
    and, letting 
    \begin{equation*}
        \mathsf{N}_1:H\to H_1\quad\text{and}\quad \mathsf{N}_2:H\to H_2
    \end{equation*} 
    be the orthogonal projections on $H_1$ and $H_2$, respectively, we have  
\begin{equation}\label{eq:sommaN1N2}
    \mathsf{N}f=\mathsf{N}_1 f+\mathsf{N}_2f.
\end{equation}
Moreover 
\begin{equation}\label{eq:ortho-2}
        (\phi,\mathsf{N}_1f)=(\phi,\mathsf{N}_2f)=0.
    \end{equation}
Since  $H_1\subset\mathcal D$ by assumption, we have
$\mathsf{N}_1f\in\mathcal D$; moreover $\mathsf{N}f\in\mathcal D$, hence $\mathsf{N}_2f\in\mathcal D\cap H_2$ by \eqref{eq:sommaN1N2}. Therefore, taking into account \eqref{eq:ortho-2},
\begin{equation*}
        q(\phi,\mathsf{N}_1f)=\lambda(\phi,\mathsf{N}_1f)=0,
        \quad 
    q(\phi,\mathsf{N}_2f)=\lambda(\phi,\mathsf{N}_2f)=0,
        \end{equation*}
so that 
\begin{equation}\label{eq:sm_eig2}
        q(\Pi f,\mathsf{N}_1f)=q(\Pi f,\mathsf{N}_2f)=0.
    \end{equation}
    From \eqref{eq:ortho-q}, \eqref{eq:sommaN1N2}, and     
    \eqref{eq:sm_eig2} it follows that 
    \begin{align*}
        &q(\mathsf{N}_1f,\mathsf{N}_1f)=q(f-\Pi f-\mathsf{N}_2f, \mathsf{N}_1f)=
        q(f,\mathsf{N}_1f)-q(\Pi f,\mathsf{N}_1f)-q(\mathsf{N}_2f,\mathsf{N}_1f)
        =q(f,\mathsf{N}_1f)\\
        &q(\mathsf{N}_2f,\mathsf{N}_2f)=q(f-\Pi f-\mathsf{N}_1f, \mathsf{N}_2f)=
        q(f,\mathsf{N}_2f)-q(\Pi f,\mathsf{N}_2f)-q(\mathsf{N}_1f,\mathsf{N}_2f)
        =q(f,\mathsf{N}_2f).
    \end{align*}
    Therefore, from the definition of $\delta$, $\gamma_1$, and $\gamma_2$ we obtain
    \begin{align*}
        &\abs{q(\mathsf{N}_1f,\mathsf{N}_1f)}=\abs{q(f,\mathsf{N}_1f)}\leq \delta\norm{\mathsf{N}_1f}\leq \delta \sqrt{\frac{\abs{q(\mathsf{N}_1f,\mathsf{N}_1f)}}{\gamma_1}},\\
&\abs{q(\mathsf{N}_2f,\mathsf{N}_2f)}=\abs{q(f,\mathsf{N}_2f)}\leq \delta\norm{\mathsf{N}_2f}\leq \delta \sqrt{\frac{\abs{q(\mathsf{N}_2f,\mathsf{N}_2f)}}{\gamma_2}},
\end{align*}    
    which yields
    \begin{equation}\label{eq:sm_eig_1}
        \abs{q(\mathsf{N}_1f,\mathsf{N}_1f)}\leq \frac{\delta^2}{\gamma_1}
        \quad\abs{q(\mathsf{N}_1f,\mathsf{N}_2f)}\leq \frac{\delta^2}{\gamma_2}.
    \end{equation}
    Combining \eqref{eq:sm_eig_1}  with the definition of $\gamma_1,\gamma_2,\gamma$, we obtain the estimates
    \begin{equation}\label{eq:stima-norme-N1N2}
        \norm{\mathsf{N}_1f}^2\leq \frac{\abs{q(\mathsf{N}_1f,\mathsf{N}_1f)}}{\gamma_1}\leq \frac{\delta^2}{\gamma_1^2}\leq \frac{\delta^2}{\gamma^2},\quad
                \norm{\mathsf{N}_2f}^2\leq \frac{\abs{q(\mathsf{N}_2f,\mathsf{N}_2f)}}{\gamma_2}\leq \frac{\delta^2}{\gamma_2^2}\leq \frac{\delta^2}{\gamma^2}.
    \end{equation}
From \eqref{eq:sommaN1N2},  the orthogonality of $H_1$ and $H_2$ and \eqref{eq:stima-norme-N1N2}, we deduce that 
\begin{equation*}
    \|\mathsf{N}f\|^2=\|\mathsf{N}_1f\|^2+\|\mathsf{N}_2f\|^2\leq \frac{2\delta^2}{\gamma^2},
\end{equation*}
thus proving \eqref{eq:sm_eig_th1}. 
    
    Now, the proof of \eqref{eq:sm_eig_th2} follows from direct estimates, making use of \eqref{eq:sm_eig_th1} and \eqref{eq:sm_eig_1}. More precisely, if $\Pi f\neq0$, we first write $|\lambda-\xi|$ as
    \begin{equation*}
        \abs{\lambda-\xi}=\abs{ \frac{q(\Pi f,\Pi f)}{\norm{\Pi f}^2}-\frac{q(f,f)}{\norm{f}^2}}=\abs{ \frac{q(\Pi f,\Pi f)}{\norm{\Pi f}^2}-\frac{q(\mathsf{N}f+\Pi f,\mathsf{N}f+\Pi f)}{\norm{\mathsf{N}f+\Pi f}^2} }.
    \end{equation*}
    Then, by this, the orthogonality condition \eqref{eq:sm_eig2}, assumption \eqref{eq:ortho-q}, and the fact that $\norm{f}=1$, we obtain 
    \begin{equation}\label{eq:lambda-xi}
    \abs{\lambda-\xi}=\abs{\lambda \norm{\mathsf{N}f}^2-q(\mathsf{N}_1f,\mathsf{N}_1f)-q(\mathsf{N}_2f,\mathsf{N}_2f) }.
    \end{equation}
On the other hand, \eqref{eq:lambda-xi} is trivially satisfied if $\Pi f=0$, since, in this case, $f=\mathsf{N}f$.
    Combining \eqref{eq:lambda-xi} with the triangle inequality, \eqref{eq:sm_eig_th1} and \eqref{eq:sm_eig_1}, we obtain \eqref{eq:sm_eig_th2}.
\end{proof}

The following lemma provides an uniform extension property in domains with small holes of the form \eqref{eq:hole-shrinking}, see \cite{SW99} for the proof. 
\begin{lemma}[Extension operators]
\label{lemma:ext}
For $N\geq2$, let $\Omega\subset\R^N$ and $\Sigma\subset\R^N$ be  bounded, open Lipschitz sets.
Let $\e_0>0$ and $r_0>0$ be such that \eqref{eq:inOmega} is satisfied for some   $x_0\in\Omega$. For every $\e\in(0,\e_0)$, let $\Sigma_\e:=x_0+\e\Sigma$ and 
$\Omega_\e=\Omega\setminus\overline{\Sigma_\e}$. Then, for every $\e\in(0,\e_0)$, there exists an (inner) extension operator
\begin{equation*}
\Ee\colon H^1(\Oe)\to H^1(\Omega)
\end{equation*}
such that, for all $u\in H^1(\Oe)$, 
\[
(\Ee u)\restr{\Oe}=u
\]
and 
\begin{equation*}
\norm{\Ee u}_{H^1(\Omega)}
\leq \mathfrak{C}\norm{u}_{H^1(\Oe)},
\end{equation*}
for some constant $\mathfrak{C}>0$ independent of $\e\in(0,\e_0)$.
\end{lemma}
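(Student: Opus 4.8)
The plan is to build $\Ee$ by \emph{filling in the hole only}, reducing the construction to an $\e$-independent extension problem after blowing up around $x_0$, and to obtain an $\e$-uniform bound by extending the function \emph{minus its mean}, so that the gradient of the extension is controlled by the gradient of the datum alone. Without loss of generality take $x_0=0$, fix $R_0>\sup_{x\in\Sigma}|x|$, and (shrinking $\e_0$ if necessary; the regime of $\e$ bounded away from $0$ is classical, as $\Omega_\e$ is then a fixed Lipschitz domain) assume $2\e R_0<r_0$ for every $\e\in(0,\e_0)$, so that $\overline{\Sigma_\e}=\e\overline\Sigma\subset B_{2\e R_0}\Subset B_{r_0}\Subset\Omega$. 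I would set $\Ee u:=u$ on $\Omega_\e$ and define $\Ee u$ on the hole $\Sigma_\e$ only; then $\Ee u|_{\Omega_\e}=u$ automatically and $\norm{\Ee u}_{H^1(\Omega)}^2=\norm{u}_{H^1(\Omega_\e)}^2+\norm{\Ee u}_{H^1(\Sigma_\e)}^2$, so it is enough to define $\Ee u$ on $\Sigma_\e$ with matching trace on $\partial\Sigma_\e$ and with $\norm{\Ee u}_{H^1(\Sigma_\e)}\le \mathfrak C\,\norm{u}_{H^1(\Omega_\e)}$, $\mathfrak C$ independent of $\e$.

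Next I would fix, once and for all on the $\e$-independent bounded Lipschitz domain $D:=B_{2R_0}\setminus\overline\Sigma$, a bounded linear extension operator $G\colon H^1(D)\to H^1(B_{2R_0})$ (Stein or Calder\'on), and record the Poincar\'e--Wirtinger inequality $\norm{v-m_D(v)}_{L^2(D)}\le C_P\norm{\nabla v}_{L^2(D)}$, where $m_D(v)$ is the average of $v$ over $D$ (if $D$ happens to be disconnected one replaces $m_D(v)$ by a function locally constant on $D$ and argues componentwise). Given $u\in H^1(\Omega_\e)$, rescale by setting $v(y):=u(\e y)$ for $y\in D$ --- well defined, since $\e D=B_{2\e R_0}\setminus\overline{\Sigma_\e}\subset\Omega_\e$ --- put $m:=m_D(v)$, and define $\Ee u(x):=G(v-m)(x/\e)+m$ for $x\in\Sigma_\e$. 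Since $G(v-m)\in H^1(B_{2R_0})$ restricts to $v-m$ on $D$, the trace of $\Ee u$ on $\partial\Sigma_\e$ taken from the $\Sigma_\e$ side equals the trace of $v$ from the $D$ side, that is the trace of $u$ from $\Omega_\e$; hence $\Ee u\in H^1(\Omega)$ as required.

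The norm estimate is the point that must be handled with care. The change of variables $x=\e y$ gives
\[
\norm{\Ee u}_{H^1(\Sigma_\e)}^2=\e^{N}\norm{G(v-m)+m}_{L^2(\Sigma)}^2+\e^{N-2}\norm{\nabla G(v-m)}_{L^2(\Sigma)}^2 .
\]
By boundedness of $G$ and Poincar\'e, $\norm{\nabla G(v-m)}_{L^2(\Sigma)}\le\norm{G(v-m)}_{H^1(B_{2R_0})}\le C\norm{v-m}_{H^1(D)}\le C\norm{\nabla v}_{L^2(D)}$ --- this \emph{homogeneous} bound, free of lower-order terms, is the crux; meanwhile $|m|\le C\norm{v}_{L^2(D)}$, so $\norm{G(v-m)+m}_{L^2(\Sigma)}\le C\big(\norm{\nabla v}_{L^2(D)}+\norm{v}_{L^2(D)}\big)$. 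Scaling the $D$-norms back, $\norm{\nabla v}_{L^2(D)}^2=\e^{2-N}\norm{\nabla u}_{L^2(\e D)}^2$ and $\norm{v}_{L^2(D)}^2=\e^{-N}\norm{u}_{L^2(\e D)}^2$, every power of $\e$ cancels (the only surviving factor is $\e^{2}\le1$), and one obtains $\norm{\Ee u}_{H^1(\Sigma_\e)}^2\le C\norm{u}_{H^1(\e D)}^2\le C\norm{u}_{H^1(\Omega_\e)}^2$ with $C$ depending only on $N$, $\Sigma$ and $R_0$. This gives the lemma with $\mathfrak C=\sqrt{1+C}$.

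I expect the main obstacle to be precisely this scaling balance. Since $\norm{\nabla\cdot}_{L^2}$ and $\norm{\cdot}_{L^2}$ transform with different powers of $\e$ under the dilation by $\e$, a naive \enquote{rescale, extend in $H^1$, rescale back} argument would leave a diverging factor $\e^{-1}$ in the estimate. Subtracting the mean --- so that the gradient of the extension is bounded by the gradient of the datum alone, via Poincar\'e on the \emph{fixed} domain $D$ --- and absorbing the low-frequency part into the scale-cheap constant $m$ is exactly what makes the two scalings cancel. Apart from this device, the proof only uses the standard existence of extension operators for fixed bounded Lipschitz domains.
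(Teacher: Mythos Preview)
Your argument is correct. The paper does not actually prove this lemma; it simply refers the reader to \cite{SW99}. Your proof --- blow up the hole to unit scale, subtract the mean so that Poincar\'e on the fixed annular domain $D=B_{2R_0}\setminus\overline{\Sigma}$ yields a \emph{homogeneous} gradient bound on the extension, then scale back --- is exactly the standard device for making extension operators uniform under dilation, and is the technique underlying the cited reference. The identification of the scaling mismatch between $\|\nabla\cdot\|_{L^2}$ and $\|\cdot\|_{L^2}$ as the key obstruction, and the mean-subtraction trick as its cure, is spot on; nothing further is needed.
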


\section*{Acknowledgments}

  R. Ognibene is supported by the project ERC VAREG - \emph{Variational approach to the regularity of the free boundaries} (grant agreement No. 853404) and by the INdAM-GNAMPA project 2023 CUP E53C22001930001. 
    V. Felli is supported by the  PRIN project no. 20227HX33Z - \emph{Pattern formation in nonlinear phenomena}.
    Part of this work was developed while V. Felli and R. Ognibene were in residence at Institut Mittag-Leffler in Djursholm, Stockholm (Sweden) during the semester \emph{Geometric Aspects of Nonlinear Partial Differential Equations} in 2022, supported by the Swedish Research Council under grant no. 2016-06596. L. Liverani is supported by the Alexander von Humboldt fellowship for Postdoctoral Researchers.


\printbibliography

\end{document}